\newcommand*{\QEDB}{\hfill\ensuremath{\square}}%
\DeclareMathOperator*{\argmin}{arg\,min}
\newtheorem{theo}{Theorem}[section]
\newtheorem{ass}[theo]{Assumption}
\newtheorem{prop}[theo]{Proposition}
\newtheorem{lem}[theo]{Lemma}
\newtheorem{defi}[theo]{Definition}
\theoremstyle{definition}
\newtheorem{exam}[theo]{Example}
\newtheorem{rem}[theo]{Remark}
\begin{document}
%	\onehalfspacing

\title[Degrees of freedom for piecewise Lipschitz estimators]{Degrees of freedom for piecewise Lipschitz estimators}
\author[F. R. Mikkelsen]{Frederik Riis Mikkelsen}
\address{Department of Mathematical Sciences,
University of Copenhagen,
Universitetsparken 5,
2100 Copenhagen \O,
Denmark}

\email[Corresponding author]{frm@math.ku.dk}
\author[N. R. Hansen]{Niels Richard Hansen} 
\email{Niels.R.Hansen@math.ku.dk}

\subjclass[2010]{62J05, 62J07}

\keywords{best subset selection, lasso-OLS, degrees of freedom, Stein's Lemma}

\begin{abstract}
  A representation of the degrees of freedom akin to Stein's lemma is
  given for a class of estimators of a mean value parameter in
  $\mathbb{R}^n$. Contrary to previous results our representation
  holds for a range of discontinues estimators. It shows that even
  though the discontinuities form a Lebesgue null set, they cannot be
  ignored when computing degrees of freedom.  Estimators with
  discontinuities arise naturally in regression if data driven
  variable selection is used. Two such examples, namely best subset
  selection and lasso-OLS, are considered in detail in this paper. For
  lasso-OLS the general representation leads to an estimate of the
  degrees of freedom based on the lasso solution path, which in turn
  can be used for estimating the risk of lasso-OLS. A similar
  estimate is proposed for best subset selection.  The usefulness of
  the risk estimates for selecting the number of variables is
  demonstrated via simulations with a particular focus on lasso-OLS.
\end{abstract}

\maketitle

\noindent

% Things to consider:
% - Section A.1 The Lasso decomp... er det allerede indbefattet af eksempel 2.5? (se også afsnittet ligeefter ex 2.5)

\section{Introduction}

Representations of the effective dimension of a statistical model have
been studied extensively in many different frameworks. For classical
model selection criteria such as AIC and Mallows's $C_p$ the dimension
of the parameter space is used to adjust the empirical risk for its
optimism so as to provide a fair model score across different dimensions. A
number of extensions to models or methods without a well defined
dimension exist, such as the trace of the smoother matrix for scatter
plot smoothers, see e.g. \cite{Hastie:1990}, and the use of the
divergence of a sufficiently differentiable estimator based on Stein's
lemma as described in \cite{Efron:2004}. Stein's lemma was used by
\cite{zou2007} and \cite{tibshirani2012} to demonstrate that for the lasso
estimator in a linear regression model with Gaussian errors, the
number of estimated non-zero parameters is an appropriate estimate of
the effective dimension.

It is well known that neither Mallows's $C_p$ nor AIC or related
information criteria correctly adjust for the optimism that results 
from selecting one model among a number of models of equal
dimension. The usage of such methods for model selection without
adequate adjustments was called ``a quiet scandal in the statistical
community'' by \cite{Breiman:1992}, who proposed a bootstrap based
method for risk estimation as an alternative. \cite{Ye:1998} defined the notion of 
generalized degrees of freedom for an estimator of the mean in a
Gaussian model and showed how to use this number for risk
estimation. The results by Ye apply to discontinuous estimators
that involve model selection, but
his proposal for computing the degrees of freedom was similarly to Breiman's
based on refitting models to perturbed data. 

If the estimator satisfies the differentiability requirements for
Stein's lemma, Lemma 2 in \cite{stein1981}, the divergence of the
estimator w.r.t. the data is an unbiased estimate of the degrees of
freedom in the generalized sense of \cite{Ye:1998}. This was used by
\cite{donoho1995}, \cite{Meyer:2000}, \cite{zou2007}, \cite{Kato:2009} and
\cite{tibshirani2012} among others to derive formulas for the degrees
of freedom of estimators that are Lipschitz continuous. 

For estimators
with discontinuities Stein's lemma generally breaks down and the
divergence will not be an unbiased estimate of the degrees of
freedom. Note that an estimator can be 
continuous or even differentiable almost everywhere -- it can be a projection locally --
and still be defined globally in such a way that it has non-ignorable
discontinuities. This is, in particular, the case in regression when
data adaptive variable selection is used to select among a number of
projection estimators. Best subset selection is one central example,
but variable selection procedures lead in general to non-ignorable
discontinuities. A variable selection procedure effectively
divides the sample space into a finite number of disjoint regions,
with the estimator being a projection, say, on each region. The resulting
estimator consisting of a selection step and a projection step will
generally be discontinuous on the boundary between two regions. 

\cite{Tibshirani:2015} recently made headway with the computation of
the degrees of freedom for some discontinuous
estimators. Specifically, he considered a linear regression model with
an orthogonal design and showed how to compute the degrees of freedom
for hard thresholding, which for orthogonal designs is equivalent to
the Lagrangian formulation of best subset selection. He also gave an
extension of Stein's lemma to some discontinuous estimators, though it
was not shown if this extension applies to subset selection
estimators. \cite{Hansen:2014} gave a different generalization of
Stein's lemma for all estimators that are metric projections onto a
closed set. This generalization applies to subset selection and other
estimators with non-convex constraints, but did not lead to a readily
computable representation of the contribution to the degrees of
freedom that are due to the discontinuities of the metric projection.

The first main contribution of this paper is the general Theorem
\ref{theo:singular_repres},
which is a version of Stein's lemma for
estimators that are locally Lipschitz continuous on each of a finite
number of open sets, whose union makes up Lebesgue almost all of
$\mathbb{R}^n$. This is a broad class of estimators containing a number of regression estimators
that include variable selection. Compared to existing results, Theorem
\ref{theo:singular_repres} holds under verifiable conditions without 
putting restrictions on the design matrix such as orthogonality. 

As a main example the lasso-OLS estimator in a
linear regression setup is investigated in detail in Section
\ref{sec:l-OLS}. The lasso-OLS estimator consists of two steps:
variable selection using lasso followed by ordinary least squares
estimation using the selected variables. This estimator was referred
to as the LARS-OLS hybrid in \cite{Efron:2004b}, and it is a limit case of 
the relaxed lasso as considered in \cite{Meinshausen:2007}. We follow 
the terminology of \cite{Buhlmann:2011}, p. 34, and call it the
lasso-OLS estimator. 

The second main contribution of this paper is a derivation of a computable estimate of the
degrees of freedom -- and thus the risk -- for lasso-OLS, which only involves the computation
of a single lasso solution path and corresponding OLS estimators along
the path. Simulation studies reported in Section
\ref{sec:sim} demonstrated that the resulting risk estimate leads to reliable model selection 
across a range of different designs and parameter settings, and 
that the risk estimate itself has smaller mean squared error than
the computationally more demanding cross-validation estimate. 

For the Lagrangian formulation of best subset selection it is also 
demonstrated that Theorem
\ref{theo:singular_repres} holds, but the situation is more
complicated than for lasso-OLS. However, it is possible to derive
an approximation, which is exact for orthogonal designs, as shown in
Section \ref{sec:best}. 

The proof of Theorem
\ref{theo:singular_repres} and some auxiliary technical results are
in the appendix. 
	
	\section{A general representation of degrees of freedom }

        Throughout the paper we consider the multivariate Gaussian model $\mathcal{N}(\mu, \sigma^2I)$ on $\mathbb{R}^n$ with $\mu$ the unknown parameter, and we let $\hat\mu:\mathbb{R}^n\rightarrow\mathbb{R}^n$ denote an estimator of $\mu$. A typical application is to linear regression estimators of the form $X \hat{\beta}$ where $X$ denotes an $n \times p$ matrix and $\hat{\beta}$ denotes an estimator of the parameters in the linear regression model. When the estimator $\hat{\beta}$ sets some of the parameters to exactly zero we say that the estimator does variable selection. The lasso, \cite{Tibshirani:1996}, is an example of a globally Lipschitz continuous estimator that does variable selection, while best subset selection is a discontinuous estimator that does variable selection. The lasso-OLS -- as studied intensively in Section \ref{sec:l-OLS} -- is another example of a discontinuous regression estimator that does variable selection. Though discontinuous regression estimators that do variable selection constitute the main motivation for the present paper, the general results are more conveniently formulated in terms of estimators of the mean $\mu$ without reference to the regression setup.

        Letting $Y \sim \mathcal{N}(\mu, \sigma^2I)$ the risk of the estimator is defined as 
$$\mathrm{Risk}(\hat{\mu}) \coloneqq E\|\mu-\hat{\mu}(Y)\|_2^2,$$
provided that $\hat{\mu}(Y)$ has finite second moment, which will thus be assumed throughout. The risk is a quantification of the error of $\hat\mu$, and tuning parameters are often chosen by minimising an estimate of the risk. Our main interest is to estimate the risk under the Gaussian model. The following definition introduces two notions of degrees of freedom that are useful when we want to estimate the risk. In the definition, $\psi(y;\mu,\sigma^2)$ denotes the density for the $\mathcal{N}(\mu, \sigma^2I)$ distribution and $\langle \cdot, \cdot \rangle$ denotes the standard inner product on $\mathbb{R}^n$. The divergence operator is also needed. It is the differential operator defined as
$$\mathrm{div}(f) = \sum_{i=1}^n \partial_i f_i$$
for $f : \mathbb{R}^n \to \mathbb{R}^n$ Lebesgue almost everywhere differentiable and with $\partial_i$ denoting the partial derivative w.r.t. the $i$th coordinate.

	\begin{defi}
		\label{defi:df} For a measurable map $\hat\mu:\mathbb{R}^n\rightarrow\mathbb{R}^n$ such that $\hat{\mu}(Y)$ has finite second moment the degrees of freedom of $\hat{\mu}$ is
		\begin{equation}
		\label{eq:df_def}
		\mathrm{df}(\hat{\mu}) \coloneqq \sum_{i=1}^{n}{\frac{\mathrm{cov}(Y_i,\hat{\mu}(Y)_i)}{\sigma^2}}=\int{\frac{\langle y-\mu,\hat{\mu}(y)\rangle}{\sigma^2} \psi(y;\mu,\sigma^2) dy}.
		\end{equation}
		If $\hat{\mu}$ is differentiable in Lebesgue almost all points and $\mathrm{div}(\hat{\mu})$ has finite first moment Stein's degrees of freedom of $\hat{\mu}$ is
		\begin{equation}
		\label{eq:df_s_def}
		\mathrm{df}_{\mathrm{S}}(\hat{\mu}) \coloneqq E(\mathrm{div} (\hat{\mu})(Y)).
		\end{equation}
	\end{defi}
	
	A simple expansion of the risk yields
	\begin{equation} \label{eq:riskexpan}
	\mathrm{Risk} =E\|Y-\hat{\mu}(Y)\|_2^2-n\sigma^2+2\sigma^2\mathrm{df}(\hat{\mu}).
	\end{equation}
	Hence $\|Y-\hat{\mu}(Y)\|_2^2-n\sigma^2+2\sigma^2\widehat{\mathrm{df}}$ is an unbiased risk estimate if $\widehat{\mathrm{df}}$ is an unbiased estimate of $\mathrm{df}(\hat{\mu})$. In practice, $\sigma^2$ must be estimated as well and a bias of $\widehat{\mathrm{df}}$ can also be preferable if it reduces the variance. Hence exact unbiasedness of a risk estimate based on \eqref{eq:riskexpan} is of secondary interest, but it is of interest to find adequate corrections of the squared error $\|Y-\hat{\mu}(Y)\|_2^2$ that can be used for model assessment and comparison. 
	
        If $\hat{\mu}$ is \textit{almost differentiable} then $\mathrm{df}(\hat{\mu})=\mathrm{df}_{\mathrm{S}}(\hat{\mu})$ due to Stein's lemma (Lemma 2 in \cite{stein1981}), in which case $\mathrm{div}(\hat{\mu})(Y)$ is an unbiased estimate of $\mathrm{df}(\hat{\mu})$. However, most estimators with discontinuities are not almost differentiable, and for such estimators it is not clear if $\mathrm{div}(\hat{\mu})(Y)$ is a useful estimate of the degrees of freedom. Indeed, our main result, Theorem \ref{theo:singular_repres}, provides a representation of $\mathrm{df}(\hat{\mu}) - \mathrm{df}_{\mathrm{S}}(\hat{\mu})$, which is nonzero for a range of estimators. The result provides the theoretical basis for establishing more adequate estimates of the degrees of freedom and thus the risk. Furthermore, Theorem \ref{theo:debiasedlasso} provides a quite remarkable connection between $\mathrm{df}(\hat{\mu})$ and $\mathrm{df}_{\mathrm{S}}(\hat{\mu})$ for the lasso-OLS estimator, which can be used to derive an estimate of $\mathrm{df}(\hat{\mu})$. This result is directly applicable in practice and provides fast and accurate risk estimation without the need for cross-validation, say.

	Our main result is derived under the assumptions on the estimator as stated below. To fix notation we let $B(x,r)$ denote the closed ball in $\mathbb{R}^n$ of radius $r$ and center $x$. Additionally, we let $\mathcal{H}^{n-1}$ denote the $n-1$ dimensional Hausdorff measure -- a generalisation of the surface measure of $n-1$ dimensional hypersurfaces in $\mathbb{R}^n$ (see e.g. \cite{evans1991measure} for details).
	
	\begin{ass}
		\label{ass:assumption1}
		The estimator $\hat{\mu}$ can be written as $\hat{\mu}=\sum_{i=1}^{N}1_{U_i}\hat{\mu}_i$ for a collection of open and disjoint sets $\{U_i\}_{i=1}^N$ with $\bigcup_{i=1}^N\overline{U}_i=\mathbb{R}^n$. Additionally, for each $i=1,...,N$:
		\begin{enumerate}[(a)]
			\item \label{itm:ass_1} The map $\hat{\mu}_i:\overline{U}_i\rightarrow\mathbb{R}^n$ is locally Lipschitz.
			\item \label{itm:ass_2} The random variable $1_{U_i}\emph{div}(\hat{\mu}_i)(Y)$ has finite first moment and $\|\hat{\mu}_i\|$ is polynomially bounded on $U_i$.
			\item \label{itm:ass_3} The function $r\mapsto \mathcal{H}^{n-1}\left(\partial U_i \cap B(0,r) \right)$ is polynomially bounded.
		\end{enumerate}
	\end{ass}

\begin{rem}
\label{rem:ass}
The following points are worth noting:
\begin{enumerate}[a)]
\item \label{itm:rem_ass_1} {\bf Boundary values of the estimator.} Assumption \ref{ass:assumption1}\eqref{itm:ass_3} implies that the boundaries of the sets $U_i$ are Lebesgue null sets, and thus that $\mathbb{R}^n \backslash \bigcup_i U_i$ has Lebesgue measure zero. The estimator $\hat{\mu}$ is here defined to be zero on this null set, but with $Y$ having an absolutely continuous distribution its value on a null set is irrelevant. Note, however, that Assumption \ref{ass:assumption1}\eqref{itm:ass_1} ensures that $\hat{\mu}_i$ is uniquely defined on $\partial U_i$. In a concrete case there may be a natural way to define $\hat{\mu}$ on the common boundary between $U_i$ and $U_j$, say, but we make no abstract attempt to select between $\mu_i$ and $\mu_j$ on the boundary.
\item \label{itm:rem_ass_2} {\bf Degrees of freedom.} Assumption \ref{ass:assumption1}\eqref{itm:ass_1} implies by Rademacher's theorem (Theorem 3.1.6 and 3.1.7 in \cite{federer1969geometric}) that $\mathrm{div}(\hat{\mu}_i)$ is defined Lebesgue a.e.. Combining this with Assumption \ref{ass:assumption1}\eqref{itm:ass_2} we conclude that under Assumption \ref{ass:assumption1} both $\mathrm{df}(\hat{\mu})$ and  $\mathrm{df}_{\mathrm{S}}(\hat{\mu})$ are well defined. 
\item \label{itm:rem_ass_3} {\bf Existence of normal vectors.} Assumption \ref{ass:assumption1}\eqref{itm:ass_3} implies that the sets $U_i$ have locally finite perimeter (see Theorem 5.11.1 in \cite{evans1991measure}), thus a measure theoretic outer unit normal $\eta_i$ is defined on a subset of $\partial U_i$. In fact, by Lemma \ref{lem:outer_unit_normal} Assumption \ref{ass:assumption1}\eqref{itm:ass_3} only needs to hold for the reduced boundary $\partial^*U_i$ (see Definition 5.7 and Lemma 5.8.1 in \cite{evans1991measure}). Whenever $\partial U_i$ is smooth the measure theoretic unit normal coincides with the usual pointwise unit normal. 
%\item \label{itm:rem_ass_4} {\bf Regular open assumption.} The sets $U_i$ are assumed regular open, that is, $\mathrm{int}(\overline{U}_i) = U_i$, which among other things implies that $\partial U_i \subseteq \bigcup_{j: j \neq i} \overline{U}_j$. For the details see the proof of Lemma \ref{lem:outer_unit_normal}. It is a convenient though not strictly necessary assumption. Given Assumptions \ref{ass:assumption1}\eqref{itm:ass_1} and \ref{ass:assumption1}\eqref{itm:ass_3} the estimator is only changed on a Lebesgue null set if $U_i$ is replaced by $\mathrm{int}(\overline{U}_i)$ on which $\hat{\mu}_i$ is well defined by Assumption \ref{ass:assumption1}\eqref{itm:ass_1}. This can only make the boundaries smaller and Assumption \ref{ass:assumption1}\eqref{itm:ass_3} still holds after the replacement. 
\end{enumerate}
\end{rem}
	
	Estimators that involve data driven variable selection will generally fulfil Assumption \ref{ass:assumption1} with each $U_i$ corresponding to a set of selected variables. Example \ref{exam:exam_of_assumption1} provides a thorough characterization of $U_i$ in the lasso-OLS setup. Moreover, a similar characterization of $U_i$ is given in Example \ref{exam:U_A} for a class of estimators defined via minimisation of a penalized loss function. 
	
	The conditions in Assumption \ref{ass:assumption1} are typically easy to verify, except perhaps the third condition, as it involves bounding Hausdorff measures. Appendix \ref{sec:semialg} provides some results that can be helpful for verifying the third condition. For estimators satisfying Assumption \ref{ass:assumption1} we have the following representation of the degrees of freedom. 
	
	\begin{theo}
		\label{theo:singular_repres}
		If $\hat{\mu}$ satisfies Assumption \ref{ass:assumption1} then 
		\begin{equation}
		\label{eq:df_struct}
		\mathrm{df}(\hat{\mu})=\mathrm{df}_{\mathrm{S}}(\hat{\mu}) + 
\frac{1}{2}\sum_{i\neq j}{  \int_{ \overline{U}_i\cap \overline{U}_j}{  \langle\hat{\mu}_j-\hat{\mu}_i , \eta_i\rangle\psi(\, \cdot \,; \mu, \sigma^2) \ d\mathcal{H}^{n-1} } },
		\end{equation}
		where $\eta_i$ denotes the measure theoretic outer unit normal to $\partial U_i$.
	\end{theo}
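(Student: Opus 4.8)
The plan is to reduce the computation of $\mathrm{df}(\hat{\mu})$ to an integration-by-parts argument carried out separately on each cell $U_i$, so that the discrepancy between $\mathrm{df}(\hat{\mu})$ and $\mathrm{df}_{\mathrm{S}}(\hat{\mu})$ emerges precisely as a sum of boundary integrals. The starting point is the identity $\nabla\psi(y;\mu,\sigma^2)=-\tfrac{y-\mu}{\sigma^2}\psi(y;\mu,\sigma^2)$, which lets us rewrite the integrand in \eqref{eq:df_def}. Since the boundaries $\partial U_i$ are Lebesgue null by Remark \ref{rem:ass}\eqref{itm:rem_ass_1}, I would split the defining integral over the cells and obtain
\[
\mathrm{df}(\hat{\mu})=\sum_{i=1}^N\int_{U_i}\frac{\langle y-\mu,\hat{\mu}_i(y)\rangle}{\sigma^2}\,\psi\,dy=-\sum_{i=1}^N\int_{U_i}\langle\hat{\mu}_i,\nabla\psi\rangle\,dy,
\]
which recasts the whole problem as a divergence-theorem computation.

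The next step is integration by parts on each cell. Applying the product rule $\mathrm{div}(\psi\hat{\mu}_i)=\psi\,\mathrm{div}(\hat{\mu}_i)+\langle\nabla\psi,\hat{\mu}_i\rangle$ together with the Gauss--Green theorem for sets of finite perimeter (available by Assumption \ref{ass:assumption1}\eqref{itm:ass_3} and Remark \ref{rem:ass}\eqref{itm:rem_ass_3}) gives, on each $U_i$,
\[
\int_{U_i}\langle\nabla\psi,\hat{\mu}_i\rangle\,dy=\int_{\partial^*U_i}\psi\,\langle\hat{\mu}_i,\eta_i\rangle\,d\mathcal{H}^{n-1}-\int_{U_i}\psi\,\mathrm{div}(\hat{\mu}_i)\,dy.
\]
Summing over $i$ and recognising that $\hat\mu=\hat\mu_i$ on $U_i$ gives $\sum_i\int_{U_i}\psi\,\mathrm{div}(\hat{\mu}_i)\,dy=E(\mathrm{div}(\hat{\mu})(Y))=\mathrm{df}_{\mathrm{S}}(\hat{\mu})$, so that
\[
\mathrm{df}(\hat{\mu})=\mathrm{df}_{\mathrm{S}}(\hat{\mu})-\sum_{i=1}^N\int_{\partial^*U_i}\psi\,\langle\hat{\mu}_i,\eta_i\rangle\,d\mathcal{H}^{n-1}.
\]

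It then remains to reorganise the boundary sum. Here I would use the structure theory of Caccioppoli partitions, in the reduced-boundary form supplied by Lemma \ref{lem:outer_unit_normal}: since $\{U_i\}$ partitions $\mathbb{R}^n$ up to a null set, $\mathcal{H}^{n-1}$-almost every point of $\partial^*U_i$ lies in exactly one other reduced boundary $\partial^*U_j$, the shared interface agrees with $\overline{U}_i\cap\overline{U}_j$ up to an $\mathcal{H}^{n-1}$-null set, and there the outer normals are antipodal, $\eta_j=-\eta_i$. Decomposing each $\partial^*U_i$ into its interfaces and pairing the ordered contributions of $(i,j)$ and $(j,i)$ yields, on each interface,
\[
\int_{\overline{U}_i\cap\overline{U}_j}\psi\,\langle\hat{\mu}_i,\eta_i\rangle\,d\mathcal{H}^{n-1}+\int_{\overline{U}_i\cap\overline{U}_j}\psi\,\langle\hat{\mu}_j,\eta_j\rangle\,d\mathcal{H}^{n-1}=\int_{\overline{U}_i\cap\overline{U}_j}\psi\,\langle\hat{\mu}_i-\hat{\mu}_j,\eta_i\rangle\,d\mathcal{H}^{n-1}.
\]
Since each unordered pair is counted twice in the ordered sum, the boundary term equals $\tfrac12\sum_{i\neq j}\int_{\overline{U}_i\cap\overline{U}_j}\psi\,\langle\hat{\mu}_i-\hat{\mu}_j,\eta_i\rangle\,d\mathcal{H}^{n-1}$, and absorbing the overall minus sign reverses the difference to $\hat{\mu}_j-\hat{\mu}_i$, giving exactly \eqref{eq:df_struct}.

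The main obstacle I expect is the rigorous justification of the Gauss--Green step, since $\hat{\mu}_i$ is only locally Lipschitz and polynomially bounded, the cells $U_i$ are unbounded with merely \emph{locally} finite perimeter, and $\psi\hat{\mu}_i$ is not compactly supported. I would handle this by truncating with the balls $B(0,r)$, applying the divergence theorem on $U_i\cap B(0,r)$ — where the field is Lipschitz and the set has finite perimeter — and letting $r\to\infty$. Assumption \ref{ass:assumption1}\eqref{itm:ass_2} (polynomial growth of $\|\hat{\mu}_i\|$ and integrability of $1_{U_i}\mathrm{div}(\hat{\mu}_i)$) and Assumption \ref{ass:assumption1}\eqref{itm:ass_3} (polynomial growth of $r\mapsto\mathcal{H}^{n-1}(\partial U_i\cap B(0,r))$), combined with the super-polynomial decay of the Gaussian density $\psi$, are precisely what is needed to force the artificial boundary integral over $\partial B(0,r)\cap U_i$ to vanish in the limit and to guarantee that every limiting integral is finite. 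The remaining measure-theoretic subtlety — that interfaces are shared by exactly two cells with opposite normals, so that the triple-junction set is $\mathcal{H}^{n-1}$-negligible — is what the auxiliary Lemma \ref{lem:outer_unit_normal} is meant to dispatch.
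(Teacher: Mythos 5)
Your proposal is correct and follows essentially the same route as the paper: Gauss--Green on each cell applied to $\psi\hat{\mu}_i$ with a truncation/dominated-convergence argument to handle unboundedness, followed by the pairing of ordered interface contributions via the antipodal-normal and triple-junction statements of Lemma \ref{lem:outer_unit_normal}. The only (immaterial) difference is that the paper truncates with smooth cutoffs $g_r$ and a Kirszbraun extension of $\hat{\mu}_i$ so as to invoke the compact-support form of Gauss--Green, whereas you intersect the cells with balls and control the artificial spherical boundary term directly.
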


	The proof is in Appendix \ref{sec:proofs}. The essential part is an application of a generalized version of Gauss-Green's formula combined with a dominated convergence argument. Note that though $\overline{U}_i\cap \overline{U}_j$ is a Lebesgue null set -- on which $\hat{\mu}$ is defined to be zero -- $\hat{\mu}_j$ and $\hat{\mu}_i$ are uniquely defined by Assumption  \ref{ass:assumption1}\eqref{itm:ass_1} and generally non-zero and different, cf. also Remark  \ref{rem:ass}\eqref{itm:rem_ass_1}.
	
	If $\hat{\mu}$ satisfies Assumption \ref{ass:assumption1} and is continuous then \eqref{eq:df_struct} reduces to $\mathrm{df}(\hat{\mu})=\mathrm{df}_{\mathrm{S}}(\hat{\mu})$, which is Stein's lemma for a class of locally Lipschitz continuous estimators. The boundary integrals therefore account for potential jumps of $\hat{\mu}$ across the boundary of any two adjacent regions $U_i$ and $U_j$. For two-step procedures consisting of a model selection step followed by a parameter estimation step, $\mathrm{df}_{\mathrm{S}}$ generally only accounts for the contribution to the degrees of freedom by the estimation step, and the boundary integrals account for the contribution from the selection step.
	
	The following example illustrates how to verify Assumption \ref{ass:assumption1} for the lasso-OLS estimator, which is the estimator that will also be the main focus of the subsequent section. 
	
	\begin{exam}[The lasso-OLS estimator]
		\label{exam:exam_of_assumption1}
		Let $X$ be an $n\times p$-matrix. For any subset $A \subseteq \{1,...,p\}$, $X_A$ denotes the matrix whose columns are those of $X$ indexed by $A$, and similarly, $\beta_A\in \mathbb{R}^{|A|}$ denotes $(\beta_i)_{i\in A}$ for $\beta\in \mathbb{R}^p$.
		We let 
		\[ \mathcal{S}\coloneqq \left\{ S=\mathrm{col}(X_A) \mid A\subseteq \{1,..., p\} \right\} \]
		denote the set of subspaces spanned by columns of $X$. The orthogonal projection onto a subspace $S\in \mathcal{S}$ is denoted by $\Pi_S$.
		
		A \emph{lasso estimator} $\hat{\mu}_{\mathrm{lasso}}^{\lambda}(y)$ with tuning parameter $\lambda>0$ is defined as $\hat{\mu}_{\mathrm{lasso}}^{\lambda}(y)=X\hat{\beta}^{\lambda}$ where
		\[ \hat{\beta}^{\lambda} \in \underset{\beta}{\argmin} \ \frac{1}{2}\|y-X\beta\|^2_2+\lambda\|\beta\|_1.   \]
		We do not make any assumptions on $X$, and therefore it may happen that multiple $\hat{\beta}^{\lambda}$-solutions exist. For a solution $\hat{\beta}^{\lambda}$, the support, $\mathrm{supp}(\hat{\beta}^{\lambda}) \subseteq \{1,...,p\}$, is called an \emph{active set}. The lasso estimator $\hat{\mu}_{\mathrm{lasso}}^{\lambda}(y) = X \hat{\beta}^{\lambda}$ belongs to the space $\mathrm{col}(X_A)$ for $A = \mathrm{supp}(\hat{\beta}^{\lambda})$, and it follows by Lemma 7 in \cite{tibshirani2012} that there exists a Lebesgue null set $N$, such that $\mathrm{col}(X_A)$ is invariant with respect to the choice of the active set of solutions for $y \not \in N$. The map $\widehat{S}^{\lambda}:\mathbb{R}^n\setminus N\rightarrow \mathcal{S}$ returning $\mathrm{col}(X_A)$ when there is a solution $\hat{\beta}^\lambda$ with active set $A = \mathrm{supp}(\beta^{\lambda})$ is therefore well defined. The \emph{lasso-OLS} estimator $\hat{\mu}_{\textnormal{l-OLS}}^{\lambda} \coloneqq \Pi_{\widehat{S}^{\lambda}}$ is defined as the projection onto the space selected by the lasso, and is thus well-defined Lebesgue almost everywhere.
		
		By defining the disjoint selection events  
		\[ U_S^{\lambda} \coloneqq (\widehat{S}^\lambda = S) \]
		for each $S\in \mathcal{S}$, we immediately see from Lemma 6 in \cite{tibshirani2012} that each selection event is open and that $\mathbb{R}^n=\bigcup_{S\in \mathcal{S}} \overline{U}_S^{\lambda}$. We can safely ignore any empty $U_S^{\lambda}$. From the proof of Lemma 6 in \cite{tibshirani2012} we see that $\partial U_S^{\lambda}\subseteq (\bigcup_{T\in \mathcal{S}}U_T^{\lambda})^c$ is a finite union of affine subspaces of dimensions $\leq n-1$, and $r\mapsto \mathcal{H}^{n-1}(\partial U_S^{\lambda}\cap B(0,r))$ is thus polynomially bounded. This follows by elementary considerations, but it is also a consequence of Lemma \ref{lem:semialgebraic}. 	Consequently, 
		\[ \hat{\mu}_{\textnormal{l-OLS}}^{\lambda} = \sum_{S\in\mathcal{S}}{1_{U_S^{\lambda}} \Pi_S} \quad \text{almost everywhere,}\]
		 and it satisfies all conditions in Assumption \ref{ass:assumption1}. Figure \ref{fig:dlasso_decomp} provides an illustration of the partition of $\mathbb{R}^n$ for $n = p = 2$ for different choices of angles between the columns in $X$. 

Note that since $\hat{\mu}_{\textnormal{l-OLS}}^{\lambda} = \Pi_S$ on the open set $U_S^{\lambda}$, its divergence equals $\mathrm{dim}(S)$, hence Stein's degrees of freedom is
$$\mathrm{df}_S(\hat{\mu}_{\textnormal{l-OLS}}^{\lambda}) = E (\mathrm{dim}(\widehat{S}^{\lambda})).$$
From Lemma 3 in \cite{Tibshirani:2013} it follows that $\mathrm{dim}(\widehat{S}^{\lambda}) = |\mathrm{supp}(\hat{\beta}^\lambda)|$ whenever the columns of $X$ are in general position, which is useful for practical computations.

		\begin{figure}
				\centering
				\includegraphics[width=0.8\textwidth]{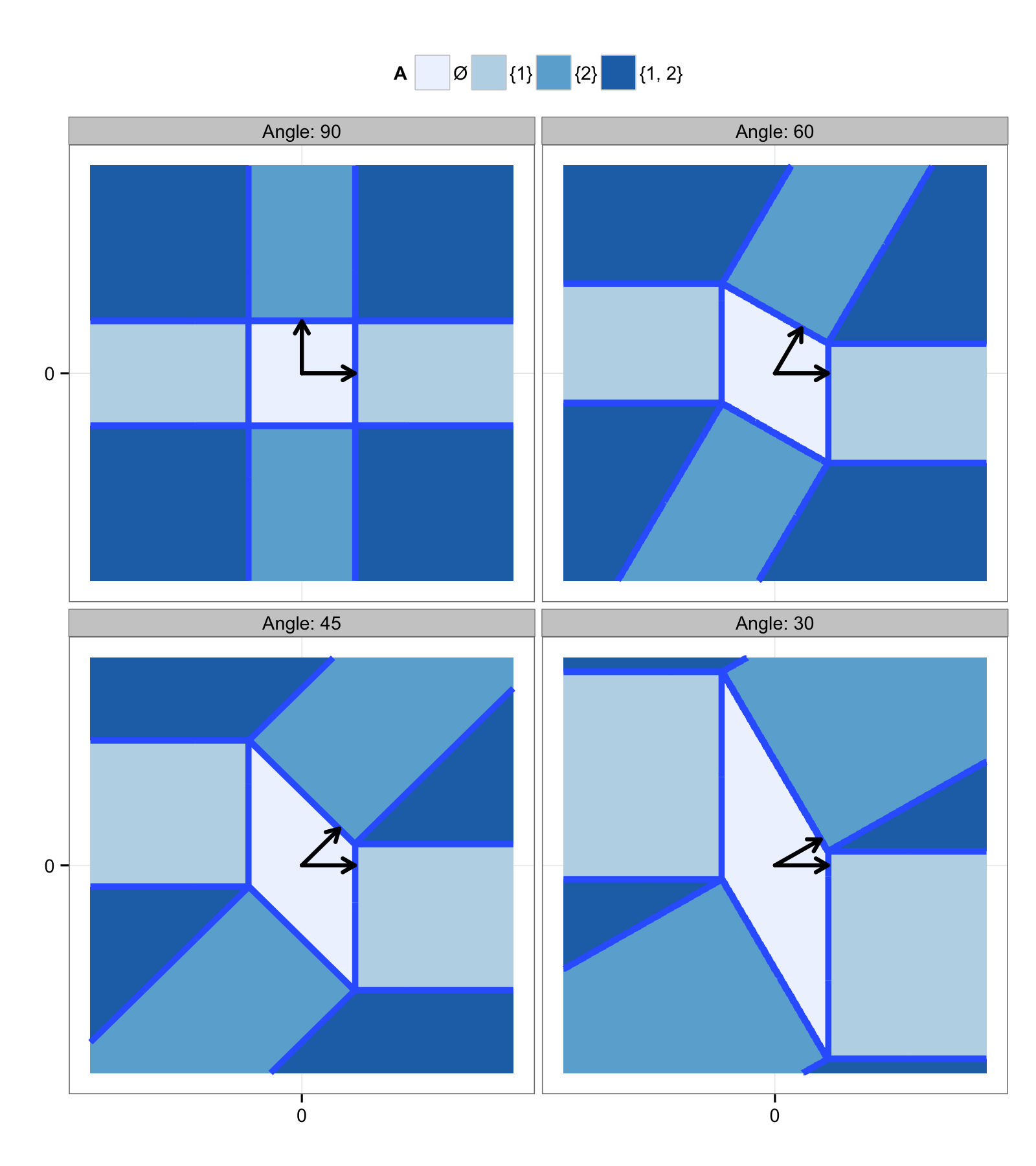}
				\caption{Illustrations of the decomposition of $\mathbb{R}^2$ into the four sets $U_{\emptyset}^1$, $U_{\{1\}}^1$, $U_{\{2\}}^1$ and $U_{\{1, 2\}}^1$ according to the lasso estimator with $\lambda = 1$. The set $U_{\emptyset}^1$ consists of the points shrunk to zero, the sets $U_{\{1\}}^1$ and $U_{\{2\}}^1$ to the points where either the second or the first coordinate, respectively, is shrunk to zero and $U_{\{1, 2\}}^1$ to the set where none of the coordinates are shrunk to zero. The decomposition depends on the angle between the two columns in $X$.}
				\label{fig:dlasso_decomp}
			\end{figure}

	 \QEDB
	\end{exam}

        The arguments above are based on results in \cite{tibshirani2012}, but see also \cite{Lee:2016b} for related characterizations 
of the selection events for lasso.

	\section{Risk estimation for lasso-OLS}
        \label{sec:l-OLS}
        
        It is not obvious how the general formula in Theorem \ref{theo:singular_repres} for $\mathrm{df}(\hat{\mu})$ can be used for computing or estimating the degrees of freedom. The first term of \eqref{eq:df_struct}, $\mathrm{df}_{\mathrm{S}}(\hat{\mu})$, may be estimated by $\mathrm{div}(\hat{\mu})(Y)$, but the second term is more difficult. In this section we show how this second term can be related to the derivative of $\lambda \mapsto \mathrm{df}_{\mathrm{S}}(\hat{\mu}^{\lambda}_{\textnormal{l-OLS}})$ 
for lasso-OLS. First we recapitulate the computations in \cite{Tibshirani:2015} of the degrees of freedom for lasso-OLS with $X$ orthogonal, which will reveal the general formula shown below.

	\begin{exam}[Continuation of Example \ref{exam:exam_of_assumption1}]
		\label{exam:diff}
		Assume that $n = p$ and $X=I$. In this case it is well known that the lasso and the lasso-OLS estimators become the soft and hard thresholding estimators, respectively. That is, 
		\[ \hat{\mu}_{\mathrm{lasso},i}^{\lambda}=\begin{cases}
		Y_i - \lambda \, \mathrm{sign}(Y_i) & \mathrm{if } \ |Y_i|> \lambda\\
		0 & \mathrm{otherwise}
		\end{cases} \qquad \mathrm{and} \qquad   \hat{\mu}_{\textnormal{l-OLS},i}^{\lambda}=\begin{cases}
                  Y_i  & \mathrm{if} \ |Y_i|> \lambda\\
		0 & \mathrm{otherwise}
		\end{cases}. \]
		We can write up closed form expressions for $\mathrm{df}(\hat{\mu}_{\textnormal{l-OLS}}^{\lambda})$ and $\mathrm{df}_{\mathrm{S}}(\hat{\mu}_{\textnormal{l-OLS}}^{\lambda})$:
		\begin{align*}
		\mathrm{df}_{\mathrm{S}}(\hat{\mu}_{\textnormal{l-OLS}}^{\lambda})&=\int{\psi(y;\mu,\sigma^2)\sum_{i}{1_{(|y_i|>\lambda)}} \ dy} = \sum_{i}{\int_{(|y_i|>\lambda)}{\psi(y_i;\mu_i,\sigma^2) \ dy_i }}\\
		&=\sum_{i}{ \Phi\left(\frac{-\lambda-\mu_i}{\sigma}\right) + \left(1 - \Phi\left(\frac{\lambda-\mu_i}{\sigma}\right)\right) },
		\end{align*}
		and as in \cite{Tibshirani:2015}
		\begin{align*}
		\mathrm{df}(\hat{\mu}_{\textnormal{l-OLS}}^{\lambda})& = \sum_{i}{\int_{\lambda}^{\infty}{\psi(y_i;\mu_i,\sigma^2)\frac{y_i(y_i-\mu_i)}{\sigma^2} \ dy_i } + \int_{-\infty}^{-\lambda}{\psi(y_i;\mu_i,\sigma^2)\frac{y_i(y_i-\mu_i)}{\sigma^2} \ dy_i }}\\
		&= \sum_{i}
		\left[ -\psi(y_i;\mu_i,\sigma^2)y_i \right]_{\lambda}^{\infty} + \int_{\lambda}^{\infty}{\psi(y_i;\mu_i,\sigma^2) \ dy_i } \\
                  & \hskip 10mm +	\left[ -\psi(y_i;\mu_i,\sigma^2)y_i \right]^{-\lambda}_{-\infty} +  \int_{-\infty}^{-\lambda}{\psi(y_i;\mu_i,\sigma^2) \ dy_i }
		  \\
		&= \lambda\sum_{i}{\left(\psi(\lambda;\mu_i,\sigma^2) + \psi(-\lambda;\mu_i,\sigma^2)\right)} + \mathrm{df}_{\mathrm{S}}(\hat{\mu}_{\textnormal{l-OLS}}^{\lambda}).
		\end{align*}	
		Letting $\partial_{\lambda}$ denote the differential operator with respect to $\lambda$ we observe that
		\begin{equation}
		\label{eq:df_relation_orthogonal}
		\mathrm{df}(\hat{\mu}_{\textnormal{l-OLS}}^{\lambda}) = \mathrm{df}_{\mathrm{S}}(\hat{\mu}_{\textnormal{l-OLS}}^{\lambda}) -\lambda\partial_{\lambda}\mathrm{df}_{\mathrm{S}}(\hat{\mu}_{\textnormal{l-OLS}}^{\lambda}),
		\end{equation} 
which is a striking identity. This is because the formula for $\mathrm{df}(\hat{\mu}_{\textnormal{l-OLS}}^{\lambda})$, though explicit, involves the unknown parameter $\mu$ and is not readily estimable. But we have the divergence estimator, $\sum_{i} 1_{(|y_i| > \lambda)}$, of $\mathrm{df}_{\mathrm{S}}(\hat{\mu}_{\textnormal{l-OLS}}^{\lambda})$, and if we from this can estimate its derivative as well, the formula above suggests how to estimate $\mathrm{df}(\hat{\mu}_{\textnormal{l-OLS}}^{\lambda})$.
\QEDB
	\end{exam}
	
	The remarkable fact that we will show is that \eqref{eq:df_relation_orthogonal} holds without the orthogonality assumption on $X$.

	\begin{theo}
		\label{theo:debiasedlasso}
		For the lasso-OLS estimator defined in Example \ref{exam:exam_of_assumption1} it holds that 
		\begin{equation}
		\label{eq:df_relation_lasso}
		\mathrm{df}(\hat{\mu}_{\textnormal{l-OLS}}^{\lambda}) = \mathrm{df}_{\mathrm{S}}(\hat{\mu}_{\textnormal{l-OLS}}^{\lambda}) - \lambda\partial_{\lambda}\mathrm{df}_{\mathrm{S}}(\hat{\mu}_{\textnormal{l-OLS}}^{\lambda})
		\end{equation}
where $\partial_{\lambda}$ denotes differentiation w.r.t. $\lambda$. 
	\end{theo}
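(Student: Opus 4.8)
The plan is to read off the degrees-of-freedom correction from Theorem \ref{theo:singular_repres} and to identify it with $-\lambda\partial_{\lambda}\mathrm{df}_{\mathrm{S}}$ by exploiting the homogeneity of the lasso in $\lambda$. Since $\hat{\mu}_{\textnormal{l-OLS}}^{\lambda}=\sum_{S\in\mathcal{S}}1_{U_S^{\lambda}}\Pi_S$ satisfies Assumption \ref{ass:assumption1} by Example \ref{exam:exam_of_assumption1}, Theorem \ref{theo:singular_repres} gives
\[ \mathrm{df}(\hat{\mu}_{\textnormal{l-OLS}}^{\lambda})-\mathrm{df}_{\mathrm{S}}(\hat{\mu}_{\textnormal{l-OLS}}^{\lambda})=\tfrac{1}{2}\sum_{S\neq T}\int_{\overline{U}_S^{\lambda}\cap\overline{U}_T^{\lambda}}\langle(\Pi_T-\Pi_S)y,\eta_S\rangle\,\psi(y;\mu,\sigma^2)\,d\mathcal{H}^{n-1}(y), \]
so the whole task reduces to showing that this boundary term equals $-\lambda\partial_{\lambda}\mathrm{df}_{\mathrm{S}}(\hat{\mu}_{\textnormal{l-OLS}}^{\lambda})$.

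The engine of the proof is the scaling identity $U_S^{c\lambda}=c\,U_S^{\lambda}$ for $c>0$, hence $U_S^{\lambda}=\lambda\,U_S^{1}$, which follows from the homogeneity $\hat{\beta}^{\lambda}(cy)=c\,\hat{\beta}^{\lambda/c}(y)$ of the lasso minimiser and the resulting invariance of the active set. Recalling from Example \ref{exam:exam_of_assumption1} that $\mathrm{df}_{\mathrm{S}}(\hat{\mu}_{\textnormal{l-OLS}}^{\lambda})=\sum_{S}\dim(S)\,P_{\mu}(U_S^{\lambda})$ with $P_{\mu}(U)=\int_U\psi(\,\cdot\,;\mu,\sigma^2)$, I would differentiate each $\lambda\mapsto P_{\mu}(U_S^{\lambda})$ after the substitution $y=\lambda z$. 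A short computation using $\nabla\psi(y)=-\psi(y)(y-\mu)/\sigma^2$ and the identity $\mathrm{div}(y\,\psi(y))=\psi(y)\big(n-\langle y,y-\mu\rangle/\sigma^2\big)$ yields $\lambda\,\partial_{\lambda}P_{\mu}(U_S^{\lambda})=\int_{U_S^{\lambda}}\mathrm{div}(y\,\psi(y))\,dy$, and the Gauss--Green formula then converts this into a boundary integral. Summing over $S$ gives
\[ \lambda\,\partial_{\lambda}\mathrm{df}_{\mathrm{S}}(\hat{\mu}_{\textnormal{l-OLS}}^{\lambda})=\sum_{S}\dim(S)\int_{\partial U_S^{\lambda}}\psi(y;\mu,\sigma^2)\,\langle y,\eta_S\rangle\,d\mathcal{H}^{n-1}(y). \]

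The remaining ingredient is a pointwise identity on the common boundaries. The $\mathcal{H}^{n-1}$-relevant part of $\overline{U}_S^{\lambda}\cap\overline{U}_T^{\lambda}$ consists of codimension-one pieces on which exactly one active variable crosses zero, so one of the spaces is obtained from the other by adjoining a single column; say $S\subset T$ with $\dim T=\dim S+1$ (the opposite orientation is symmetric). Writing $T=\mathrm{col}(X_{A'})$ and letting $i$ index the extra variable, the lasso KKT conditions show that this piece lies in the hyperplane $\{\langle a,y\rangle=\lambda c\}$ with normal $a=X_{A'}(X_{A'}^{\top}X_{A'})^{-1}e_i$; crucially $a\in T$ while $a\perp S$. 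Consequently $\eta_S$ is parallel to $a$, so $\Pi_T\eta_S=\eta_S$ and $\Pi_S\eta_S=0$, giving $\langle(\Pi_T-\Pi_S)y,\eta_S\rangle=\langle y,\eta_S\rangle=(\dim T-\dim S)\langle y,\eta_S\rangle$ on the piece. Substituting this into the boundary term, splitting $\int_{\partial U_S^{\lambda}}=\sum_{T\neq S}\int_{\overline{U}_S^{\lambda}\cap\overline{U}_T^{\lambda}}$, and symmetrising over the pair $(S,T)$ by means of $\eta_T=-\eta_S$ matches it exactly with $-\lambda\partial_{\lambda}\mathrm{df}_{\mathrm{S}}$, which completes the proof.

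I expect the main obstacle to be analytic rather than algebraic: rigorously justifying the interchange of $\partial_{\lambda}$ with the integral and the application of Gauss--Green on the unbounded, only piecewise-affine regions $U_S^{\lambda}$. This should follow from the Gaussian decay of $\psi$ together with the polynomial growth of $r\mapsto\mathcal{H}^{n-1}(\partial U_S^{\lambda}\cap B(0,r))$ recorded in Example \ref{exam:exam_of_assumption1}, exactly as in the proof of Theorem \ref{theo:singular_repres}. A secondary point requiring care is confirming that the degenerate configurations, where several variables change simultaneously, form an $\mathcal{H}^{n-1}$-null subset of the reduced boundary and hence do not contribute, so that the single-variable analysis above suffices.
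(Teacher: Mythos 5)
Your proposal is correct and follows essentially the same route as the paper: the boundary representation from Theorem \ref{theo:singular_repres}, the scaling $U_S^{\lambda}=\lambda U_S^{1}$ combined with differentiation under the integral and Gauss--Green to express $-\lambda\partial_{\lambda}\mathrm{df}_{\mathrm{S}}$ as a boundary integral, and the KKT-based argument that only nested pairs of column spaces with dimension gap one carry $\mathcal{H}^{n-1}$-mass, with $\eta_S$ lying in $T\ominus S$ so that $(\Pi_T-\Pi_S)\eta_S=\eta_S$. The only difference is organizational: the paper packages your inline scaling computation as the more general Proposition \ref{theo:scaling} (valid for non-constant divergences via the auxiliary estimator $\tilde{\mu}^{\lambda}(y)=\sum_i 1_{U_i^{\lambda}}(y)\,y\,\mathrm{div}(\hat{\mu}_i)(y)$), whereas you specialize directly to the piecewise-constant divergence $\dim(S)$.
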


Theorem \ref{theo:debiasedlasso} suggests that $\mathrm{df}(\hat{\mu}_{\textnormal{l-OLS}}^{\lambda})$ can be estimated by differentiation of an estimate of $\mathrm{df}_{\mathrm{S}}(\hat{\mu}_{\textnormal{l-OLS}}^{\lambda})$. The divergence estimate of Stein's degrees of freedom is, however, not differentiable as a function of $\lambda$, and we need to somehow smooth it. To this end it is convenient to reparametrise the penalization in terms of $\delta = \log(\lambda)$, so that with 
$$h(\delta) \coloneqq  \mathrm{df}_{\mathrm{S}}(\hat{\mu}_{\textnormal{l-OLS}}^{\exp(\delta)}),$$
then 
$$\mathrm{df}(\hat{\mu}_{\textnormal{l-OLS}}^{\exp(\delta)}) = h(\delta) - h'(\delta).$$ 
In simulations $h$ was found to be monotonically decreasing, and thus $h'$ to be negative, but we cannot prove that this is generally the case. The integral representation of $h'$ from Theorem \ref{theo:singular_repres} is not particularly helpful as the integrand can, in fact, be negative. Based on our computational observations -- and to reduce variance of the resulting estimate -- our proposal is based on the assumption that $h'$ is negative. It is effectively a kernel  smoother that estimates the intensity of jumps for a monotone jump process. 

We note that $\mathrm{dim}(\hat{S}^{\exp(\delta)})$
is an unbiased estimate of $h(\delta)$ and that the function $\delta \mapsto \mathrm{dim}(\hat{S}^{\exp(\delta)})$ is a step function. The problem of estimating the derivative, $h'$, of its mean is thus analogous to estimating the intensity for a jump process with one main difference; the step function can have jumps of negative as well as positive sign, though most jumps will be negative. Our proposed estimate ignores the positive excursions of the step function and is computed as follows:
\begin{itemize}
\item Compute the jump points, $\lambda_i$ and jump sizes, $\Delta_i \coloneqq \inf_{\lambda < \lambda_i}\mathrm{dim}(\hat{S}^{\lambda}) -  \mathrm{dim}(\hat{S}^{\lambda+})$, of the decreasing function $\lambda \mapsto \inf_{\lambda' < \lambda}\mathrm{dim}(\hat{S}^{\lambda'})$ for $i = 1, \ldots, M$.
\item Apply a kernel density smoother to the points $\delta_i = \log(\lambda_i)$ for $i = 1, \ldots, M$ counted with the multiplicities $\Delta_i$. In the simulations presented in this paper an adaptive Gaussian kernel density smoother was used (see Section 10.4.3.2 in \cite{Givens:2012}).
\item Rescale the density estimate by the total number of jumps, that is, by $\sum_{i=1}^M \Delta_i$.
\end{itemize}

As mentioned above, we can think of the proposed estimate of $h'$ as a non-parametric estimate of the intensity of the jumps for a monotonically decreasing jump process. Alternatively, we can think of it as smoothing the jumps by a sigmoidal function (the anti-derivative of the kernel) to obtain a smooth estimate of Stein's degrees of freedom, which can then be differentiated.  Note that even if $\Delta_i$ may always be 1 in theory, the jumps are in practice computed on a grid and may thus be larger than 1, which the procedure accounts for. The estimate of  $-\lambda\partial_{\lambda}\mathrm{df}_{\mathrm{S}}(\hat{\mu}_{\textnormal{l-OLS}}^{\lambda})$ resulting from the procedure above is denoted by $\widehat{\partial}$.

	\begin{figure}
		\centering
		\includegraphics[width = \textwidth]{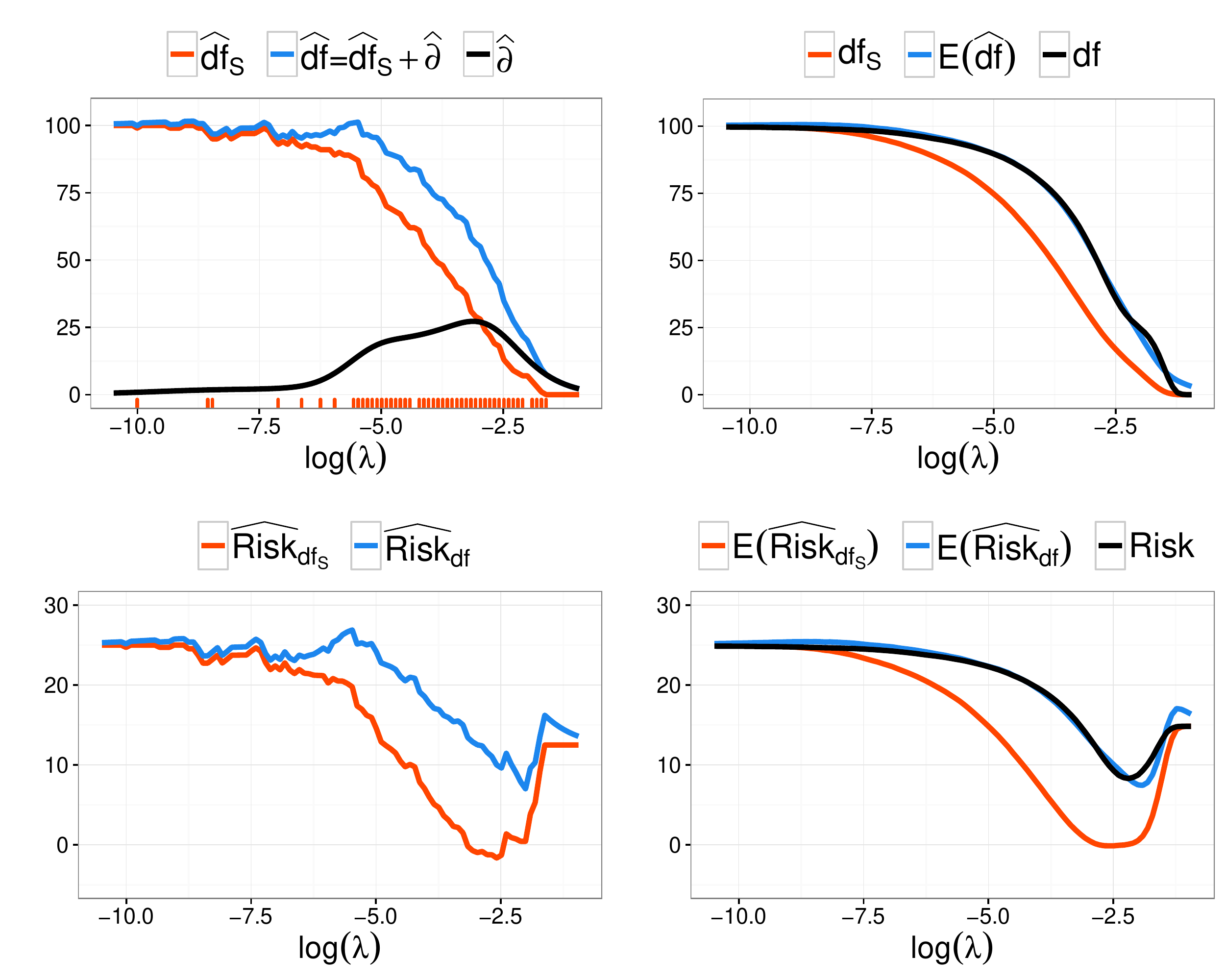}
		\caption{Left: Realization of the estimates of degrees of freedom $\hat{\mathrm{df}}_S = \mathrm{dim}(\hat{S}^{\lambda})$ and $\hat{\mathrm{df}} = \mathrm{dim}(\hat{S}^{\lambda}) + \widehat{\partial}$ as well as the correction term $\widehat{\partial}$ as a function of $\log(\lambda)$ (top) and corresponding estimates of the risk (bottom). Right: Similar to the left but mean values of the estimates obtained by averaging over 1000 samples along with the degrees of freedom $\mathrm{df} = \mathrm{df}(\hat{\mu}_{\textnormal{l-OLS}}^{\lambda})$ obtained from the 1000 samples using the covariance definition \eqref{eq:df_def}. The design parameters were: $\sigma=0.5$, $n=p=100$, $\gamma =1$, $\alpha =0.1$ and the design type was (S) with constant correlation of $\rho = 0.1$ (see Section \ref{sec:sim}).}
		\label{fig:dlasso}
	\end{figure}

Using $\mathrm{dim}(\hat{S}^{\lambda}) +\widehat{\partial}$ as an estimate of degrees of freedom leads to the risk estimate
\begin{equation}
\label{eq:riskhat}
\widehat{\mathrm{Risk}}_{\mathrm{df}} \coloneqq \|Y-\hat{\mu}_{\textnormal{l-OLS}}^{\lambda}\|_2^2-n\sigma^2+2\sigma^2\left(\mathrm{dim}(\hat{S}^{\lambda}) +\widehat{\partial}\right).
\end{equation}
For an example of the above estimate see Figure \ref{fig:dlasso}, where $\widehat{\partial}$ and   $\widehat{\mathrm{Risk}}_{\mathrm{df}}$ are applied to a single realization of $Y$ along with an average over 1000 replications.

To prove Theorem \ref{theo:debiasedlasso} we prove a more general intermediate result for estimators that are parametrised in a similar way by a tuning parameter. We use in the following $D$ to denote the differential operator w.r.t. $y$.

	\begin{prop}
		\label{theo:scaling}
		Let $q>0$ and suppose that $\hat{\mu}^{\lambda}=\sum_{i}{1_{U_i^{\lambda}}}\hat{\mu}_i$ where
		\begin{equation}
		\label{eq:set_cond}
		U_i^{\lambda} = \lambda^qU_i^1, \qquad \text{for all } i=1,...,N.
		\end{equation}
		Assume that $\mathrm{div}( \hat{\mu}_i)$ is locally Lipschitz and both  $\mathrm{div}( \hat{\mu}_i)$ and $D(\mathrm{div}(\hat{\mu}_i))$ are polynomially bounded for each $i=1,...,N$. If $\hat{\mu}^{1}$ satisfies Assumption \ref{ass:assumption1} then
		\begin{equation}
		\label{eq:res}
		-\frac{\lambda}{q}\partial_{\lambda}\mathrm{df}_{\mathrm{S}}(\hat{\mu}^{\lambda})=\frac{1}{2}\sum_{i\neq j}{ \int_{\overline{U}_i^{\lambda} \cap \overline{U}_j^{\lambda}}{ \Big(\mathrm{div}(\hat{\mu}_j)(y)- \mathrm{div}(\hat{\mu}_i)(y) \Big) \langle y , \eta_i\rangle \psi(y; \mu,\sigma^2)  \ d\mathcal{H}^{n-1}(y)}}.
		\end{equation}
	\end{prop}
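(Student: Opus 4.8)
The plan is to reduce the statement to a differentiation-under-the-integral computation followed by the generalized Gauss--Green formula, and then to do some boundary bookkeeping. First I would observe that on the open set $U_i^{\lambda}$ the estimator equals $\hat{\mu}_i$, so $\mathrm{div}(\hat{\mu}^{\lambda}) = \mathrm{div}(\hat{\mu}_i)$ there, the indicator being locally constant and contributing nothing away from the $\mathcal{H}^{n-1}$-a.e.\ relevant null set $\bigcup_i \partial U_i^{\lambda}$. Writing $h(\lambda) := \mathrm{df}_{\mathrm S}(\hat{\mu}^{\lambda})$, this gives
\[ h(\lambda) = \sum_i \int_{U_i^{\lambda}} \mathrm{div}(\hat{\mu}_i)(y)\,\psi(y;\mu,\sigma^2)\,dy. \]
To differentiate a domain that moves with $\lambda$, I would substitute $y = \lambda^q z$, which by the scaling hypothesis \eqref{eq:set_cond} maps $U_i^{\lambda}$ onto the fixed domain $U_i^1$ and produces a Jacobian factor $\lambda^{qn}$; the integrand then depends on $\lambda$ only through the maps $z \mapsto \mathrm{div}(\hat{\mu}_i)(\lambda^q z)$ and $z \mapsto \psi(\lambda^q z;\mu,\sigma^2)$ and the prefactor $\lambda^{qn}$.

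The next step is to differentiate under the integral sign, then transform back with $y = \lambda^q z$ (so that $q\lambda^{q-1} z = (q/\lambda)\,y$). This collapses the three resulting terms — the Jacobian term, the term hitting $\mathrm{div}(\hat{\mu}_i)$, and the term hitting $\psi$ — into
\[ \partial_{\lambda} h(\lambda) = \frac{q}{\lambda}\sum_i \int_{U_i^{\lambda}} \Big[ n\,\mathrm{div}(\hat{\mu}_i)\,\psi + \langle D\mathrm{div}(\hat{\mu}_i), y\rangle\,\psi + \mathrm{div}(\hat{\mu}_i)\,\langle D\psi, y\rangle \Big]\,dy. \]
The key algebraic observation is that the bracket is exactly $\mathrm{div}\big(\mathrm{div}(\hat{\mu}_i)(y)\,\psi(y;\mu,\sigma^2)\,y\big)$, i.e.\ the divergence of the vector field $F_i(y) := \mathrm{div}(\hat{\mu}_i)(y)\,\psi(y;\mu,\sigma^2)\,y$. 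Applying the generalized Gauss--Green formula on the finite-perimeter set $U_i^{\lambda}$ (the same tool used for Theorem \ref{theo:singular_repres}, with measure theoretic outer normal $\eta_i$ on the reduced boundary) turns each volume integral into a boundary integral,
\[ \partial_{\lambda} h(\lambda) = \frac{q}{\lambda}\sum_i \int_{\partial U_i^{\lambda}} \mathrm{div}(\hat{\mu}_i)(y)\,\psi(y;\mu,\sigma^2)\,\langle y,\eta_i\rangle\,d\mathcal{H}^{n-1}(y). \]

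Finally I would perform the boundary bookkeeping that produces the claimed pairwise sum. Since $\bigcup_i \overline{U}_i^{\lambda} = \mathbb{R}^n$, for $\mathcal{H}^{n-1}$-a.e.\ point of the reduced boundary $\partial^* U_i^{\lambda}$ the point also lies on $\overline{U}_j^{\lambda}$ for a unique neighbour $j$, where the outer normals satisfy $\eta_j = -\eta_i$. Decomposing $\partial U_i^{\lambda}$ (up to an $\mathcal{H}^{n-1}$-null set) into the shared faces $\overline{U}_i^{\lambda}\cap\overline{U}_j^{\lambda}$ and combining, for each unordered pair, the contribution from $\partial U_i^{\lambda}$ (carrying $\mathrm{div}(\hat{\mu}_i)$, $\eta_i$) with that from $\partial U_j^{\lambda}$ (carrying $\mathrm{div}(\hat{\mu}_j)$, $\eta_j = -\eta_i$) yields the factor $\langle y,\eta_i\rangle\big(\mathrm{div}(\hat{\mu}_i)-\mathrm{div}(\hat{\mu}_j)\big)$ on each face. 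Multiplying by $-\lambda/q$ and writing the pair sum symmetrically as $\tfrac12\sum_{i\neq j}$ then gives exactly \eqref{eq:res}.

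The main obstacle I anticipate is the rigorous justification of differentiating under the integral sign in the second step: I would need a $\lambda$-uniform integrable dominating function on a neighbourhood of the differentiation point, and this is precisely where the hypotheses that $\mathrm{div}(\hat{\mu}_i)$ and $D(\mathrm{div}(\hat{\mu}_i))$ are polynomially bounded enter, the polynomial growth being absorbed by the super-polynomial decay of the Gaussian density $\psi$. A secondary technical point is verifying the integrability conditions needed to apply the generalized Gauss--Green formula to $F_i$ on the sets $U_i^{\lambda}$ and the $\mathcal{H}^{n-1}$-a.e.\ validity of the normal-matching $\eta_j = -\eta_i$ across shared faces, but these parallel the corresponding steps already established in the proof of Theorem \ref{theo:singular_repres}.
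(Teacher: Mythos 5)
Your proposal is correct and follows essentially the same route as the paper: the same change of variables $y=\lambda^q z$, the same dominated differentiation under the integral justified by the polynomial bounds, and the same identification of the resulting integrand as the divergence of $y\mapsto \mathrm{div}(\hat{\mu}_i)(y)\psi(y)y$. The only difference is cosmetic: where you apply Gauss--Green and the normal-matching bookkeeping directly, the paper packages those final steps as an application of Theorem \ref{theo:singular_repres} to the auxiliary estimator $\tilde{\mu}^{\lambda}(y)=\sum_i 1_{U_i^{\lambda}}(y)\,y\,\mathrm{div}(\hat{\mu}_i)(y)$.
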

	\begin{proof} 
		First observe that $\partial U_i^{\lambda}\cap B(0,r) = \lambda^q (\partial U_i^{1}\cap B(0,r/\lambda^q))$, hence if $\hat{\mu}^{1}$ satisfies Assumption \ref{ass:assumption1} so does $\hat{\mu}^{\lambda}$ for all $\lambda$. Next, the change of variable formula yields
		\begin{align*}
		\mathrm{df}_{\mathrm{S}}(\hat{\mu}^{\lambda}) & =\int{\psi(y) \mathrm{div} (\hat{\mu}^{\lambda})(y) \, dy} 
 =\sum_{i}{\int_{U_i^{\lambda}}{ \psi(y) \mathrm{div} (\hat{\mu}_i)(y) \, dy }} \\ 
& =\sum_{i}{\int_{U_i^{1}}{ \lambda^{qn}\left(\psi\mathrm{div}( \hat{\mu}_i)\right)(\lambda^qz) \ dz }}.
		\end{align*}
Here $\psi = \psi(\cdot; \mu, \sigma^2)$ to ease notation. 

		The last integrand is differentiable w.r.t. $\lambda$ (for Lebesgue a.a. $z$) and its derivative is
		\begin{align*}
		qn\lambda^{qn-1}&\left(\psi\mathrm{div}(\hat{\mu}_i)\right)(\lambda^qz)+\lambda^{qn}\left\langle D\left(\psi\mathrm{div} (\hat{\mu}_i)\right)(\lambda^qz), q\lambda^{q-1}z \right\rangle \\
		&=\frac{q}{\lambda}\lambda^{qn}\left(n\left(\psi\mathrm{div} (\hat{\mu}_i)\right)(\lambda^qz) +   \left\langle D\left(\psi\mathrm{div}(\hat{\mu}_i)\right)(\lambda^qz), \lambda^qz\right\rangle \right),
		\end{align*}
		which is dominated in a neighbourhood of $\lambda$ by an integrable function due to the polynomial bounds. Hence, by the change of variable formula 
		\begin{align*}
		\frac{\lambda}{q}\partial_{\lambda}\mathrm{df}_{\mathrm{S}}(\hat{\mu}^{\lambda})&=\sum_{i}{ \int_{U_i^{1}}{ \lambda^{qn}\left(n\left(\psi\mathrm{div} (\hat{\mu}_i)\right)(\lambda^qz) +   \left\langle D\left(\psi\mathrm{div}(\hat{\mu}_i)\right)(\lambda^qz), \lambda^qz\right\rangle \right)  dz }}\\
		&=\sum_{i}{ \int_{U_i^{\lambda}}{ n \left(\psi\mathrm{div} (\hat{\mu}_i)\right)(y) +   \left\langle D\left(\psi\mathrm{div}(\hat{\mu}_i)\right)(y), y\right\rangle  dy }}\\
		&=\sum_{i}{ \int_{U_i^{\lambda}}{ n \left(\psi\mathrm{div} (\hat{\mu}_i)\right)(y)  +   \left\langle \left(\psi D\mathrm{div}(\hat{\mu}_i) + \mathrm{div}(\hat{\mu}_i)D\psi \right)(y), y\right\rangle \ dy }} \\
		&=\sum_{i}{ \int_{U_i^{\lambda}}{ \psi(y) \mathrm{div}\left(y  \mathrm{div}(\hat{\mu}_i)(y)\right) + \left\langle D\psi(y), y\mathrm{div}(\hat{\mu}_i)(y)\right\rangle dy }}.
		\end{align*}
		The last line is identified as $\mathrm{df}_{\mathrm{S}}(\tilde{\mu}^{\lambda})-\mathrm{df}(\tilde{\mu}^{\lambda})$, where 
$$\tilde{\mu}^{\lambda}(y):=\sum_{i}{1_{U_i^{\lambda}}(y)y\mathrm{div} (\hat{\mu}_i)(y)}.$$ Finally \eqref{eq:res} follows by applying Theorem \ref{theo:singular_repres} to $\tilde{\mu}^{\lambda}$ (which also satisfies Assumption \ref{ass:assumption1}). 
	\end{proof}	
	
	\begin{exam}
		\label{exam:U_A}
		There are naturally occurring examples besides the lasso selection sets that satisfy \eqref{eq:set_cond}. Consider still a linear regression setup with $X$ an $n\times p$-matrix. Let $\ell$ denote the penalized loss function
		\[  \ell(y,\beta,\lambda)=\frac{1}{2}\left\|y-X\beta\right\|_2^2+\lambda \mathrm{Pen}(\beta),  \]
		for some penalty function $\mathrm{Pen}:\mathbb{R}^p\rightarrow\mathbb{R}$ and define the sets 
		\begin{equation}
		\label{eq:U_A} 
		U_A^{\lambda}  = \mathrm{int} \left\{ y\in \mathbb{R}^n \ \bigg\vert \ \underset{\beta:\mathrm{supp}(\beta) = A}{\inf}\ell(y,\beta,\lambda) = \underset{\beta}{\inf}\ \ell(y,\beta,\lambda) \right\}, 
		\end{equation}
		for each $A\subseteq \{1,...,p\}$. Hence any $y\in U_A^{\lambda}$ has $A$ as an active set. If $\mathrm{Pen}$ is \emph{positive homogeneous} of degree $k \in [0, 2)$ then
		\[ \ell\left(\lambda^{\frac{1}{2-k}}y,\lambda^{\frac{1}{2-k}}\beta,\lambda \right)=\lambda^{\frac{2}{2-k}}\ell\left(y,\beta,1\right). \] 
		Hence $U^{\lambda}_A=\lambda^{\frac{1}{2-k}}U_A^1$ holds for all $A\subseteq \{1,...,p\}$ and $\lambda>0$. The (quasi) norms, $\mathrm{Pen}(\beta)=\|\beta\|_k^k$ for $k \in (0,2)$, and $\mathrm{Pen}(\beta)=\|\beta\|_0 = |\mathrm{supp}(\beta)|$ are examples of positive homogeneous penalties. For these penalties only $k \in [0,1]$ will result in variable selection. With $\mathrm{Pen}(\cdot) = \| \cdot\|_1$ we see that for lasso the sets $U_S^{\lambda}$ in \ref{exam:exam_of_assumption1} satisfy \eqref{eq:set_cond} with $q = 1$. 

\QEDB
	\end{exam}

	\begin{proof}[Proof of Theorem \ref{theo:debiasedlasso}]
		Let $(U_S^{\lambda})_{S\in\mathcal{S}}$ be defined as in Example \ref{exam:exam_of_assumption1}, where it was also shown that Assumption \ref{ass:assumption1} holds for the lasso-OLS estimator. Moreover, from Example \ref{exam:U_A} we see that $U_S^{\lambda}=\lambda U_S^1$ for all $\lambda>0$ and $S\in \mathcal{S}$. By Theorem \ref{theo:singular_repres} we know that the left hand side of \eqref{eq:df_relation_lasso} is
		\begin{equation}
                \begin{aligned}
		\mathrm{df}(\hat{\mu}_{\textnormal{l-OLS}}^{\lambda}) & - \mathrm{df}_{\mathrm{S}}(\hat{\mu}_{\textnormal{l-OLS}}^{\lambda}) \\
                & = \frac{1}{2}\sum_{S_1\neq S_2}\int_{\overline{U}^{\lambda}_{S_1}\cap  \overline{U}^{\lambda}_{S_2}}{\langle (\Pi_{S_2}-\Pi_{S_1})y, \eta_{S_1}(y)\rangle \psi(y)\ d\mathcal{H}^{n-1}(y) }.
                \end{aligned}
		\label{eq:lhs}
		\end{equation}

        It will first be established that $\overline{U}^{\lambda}_{S_1}\cap  \overline{U}^{\lambda}_{S_2}$ for $S_1\neq S_2$ is a $\mathcal{H}^{n-1}$ null set unless $S_1$ and $S_2$ are nested and their dimensions differ by one.

	By definition $\hat{\mu}_{\mathrm{lasso}}^\lambda\in S$ on $U_S^{\lambda}$, and by continuity of $\hat{\mu}_{\mathrm{lasso}}^{\lambda}$ (a consequence of Lemma 3 in \cite{tibshirani2012}) we conclude that the same is true on  $\overline{U}_S^{\lambda}$. Hence for $S_1,S_2\in \mathcal{S}$
		\begin{equation}
		\label{eq:lasso_fit}
		\hat{\mu}_{\mathrm{lasso}}^\lambda\in S_1\cap S_2 \text{ on } \overline{U}_{S_1}^{\lambda}\cap \overline{U}_{S_2}^{\lambda}.
		\end{equation}
		For $A\subseteq\{1,...,p\}$ and $s\in \{-1,1\}^{|A|}$ we define the set 
$$L_{A,s}\coloneqq \{u \in \mathbb{R}^n \mid X_A^Tu=\lambda s \}.$$ 
It now follows from the first order subgradient conditions for lasso that
		\begin{equation}
		\label{eq:lasso_resid_cone}
		y-\hat{\mu}_{\mathrm{lasso}}^\lambda\in \bigcup_{\substack{A\subseteq\{1,...,p\}:\\ \mathrm{col}(X_A) = S}} \bigcup_{s \in \{-1,1\}^{|A|}}L_{A,s}
		\end{equation}
		for all $y\in U_S^{\lambda}$. Note that the dimension of the above set is $n - \mathrm{dim}(S)$. Since the set is closed and $\hat{\mu}_{\mathrm{lasso}}^\lambda$ is continuous, \eqref{eq:lasso_resid_cone} holds for $y\in \overline{U}_S^{\lambda}$ as well. We therefore conclude that 
		\begin{equation}
				\label{eq:lasso_resid}
				\begin{aligned}
				y - \hat{\mu}_{\mathrm{lasso}}^\lambda & \in \left(\bigcup_{\substack{A\subseteq\{1,...,p\}:\\ \mathrm{col}(X_A) = S_1}} \bigcup_{s \in \{-1,1\}^{|A|}}L_{A,s}\right) \cap \left(\bigcup_{\substack{A\subseteq\{1,...,p\}:\\ \mathrm{col}(X_A) = S_2}} \bigcup_{s \in \{-1,1\}^{|A|}}L_{A,s}\right)\\
				& \subseteq\bigcup_{\substack{A\subseteq\{1,...,p\}:\\ \mathrm{col}(X_A) = S_1 + S_2}} \bigcup_{s \in \{-1,1\}^{|A|}}L_{A,s}
				\end{aligned}
		\end{equation}
		for all $y\in \overline{U}_{S_1}^{\lambda}\cap \overline{U}_{S_2}^{\lambda}$ and $S_1,S_2\in \mathcal{S}$.		
		
		From \eqref{eq:lasso_fit} and \eqref{eq:lasso_resid} we deduce that
		\begin{equation}
		\label{eq:U_S_lasso}
		\overline{U}_{S_1}^{\lambda}\cap \overline{U}_{S_2}^{\lambda} \subseteq S_1\cap S_2  +  \bigcup_{\substack{A\subseteq\{1,...,p\}:\\ \mathrm{col}(X_A) = S_1 + S_2}} \bigcup_{s \in \{-1,1\}^{|A|}}L_{A,s}
		\end{equation}
		for $S_1,S_2\in \mathcal{S}$. Consequently, if $S_1\neq S_2$ then $\mathcal{H}^{n-1}\left(\overline{U}_{S_1}^{\lambda}\cap \overline{U}_{S_2}^{\lambda}\right)=0$, unless $S_1$ and $S_2$ are nested and their dimensions differ by $1$.

We can therefore assume $S_1\subseteq S_2$ and $\mathrm{dim}(S_2)=\mathrm{dim}(S_1)+1$. Furthermore, $S_2 \ominus S_1 = (S_1 + S_2) \ominus (S_1\cap S_2)$ is orthogonal to any of the faces $S_1\cap S_2 + L_{A,s}$ in \eqref{eq:U_S_lasso} and thus also orthogonal to $\overline{U}^{\lambda}_{S_1}\cap  \overline{U}^{\lambda}_{S_2}$. This implies that $\eta_{S_1} = (\Pi_{S_2}-\Pi_{S_1})\eta_{S_1}$ and hence \eqref{eq:lhs} becomes
		\begin{align*}
		\mathrm{df}&(\hat{\mu}_{\textnormal{l-OLS}}^{\lambda})  - \mathrm{df}_{\mathrm{S}}(\hat{\mu}_{\textnormal{l-OLS}}^{\lambda})\\
		&= \sum_{\substack{S_1\subseteq S_2,\\
				\mathrm{dim}(S_2)=\mathrm{dim}(S_1)+1}}\int_{\overline{U}^{\lambda}_{S_1}\cap  \overline{U}^{\lambda}_{S_2}}{\langle y, \eta_{S_1}(y)\rangle \psi(y)\ d\mathcal{H}^{n-1}(y)}\\
		&= \sum_{\substack{S_1\subseteq S_2,\\	\mathrm{dim}(S_2)=\mathrm{dim}(S_1)+1}}\int_{\overline{U}^{\lambda}_{S_1}\cap  \overline{U}^{\lambda}_{S_2}} \underbrace{\left[\mathrm{div}(\Pi_{S_2}) - \mathrm{div}(\Pi_{S_1})\right]}_{=\mathrm{dim}(S_2)-\mathrm{dim}(S_1) = 1}\langle y, \eta_{S_1}(y) \rangle \psi(y)\ d\mathcal{H}^{n-1}(y)\\
		&=-\lambda\partial_{\lambda}\mathrm{df}_{\mathrm{S}}(\hat{\mu}_{\textnormal{l-OLS}}^{\lambda})
		\end{align*}
		by Proposition \ref{theo:scaling}.	
	\end{proof}

	\section{Simulation Study}
        \label{sec:sim}

        We report in this section the results from an extensive simulation study, whose purpose was to quantify how $\widehat{\mathrm{Risk}}_{\mathrm{df}}$ given by \eqref{eq:riskhat} performs as an estimate of the risk and in terms of selecting the penalty parameter $\lambda$. Its performance was compared to alternatives for risk estimation and tuning, and the resulting lasso-OLS estimator was compared to the lasso estimator. Throughout, the R package \textit{glmnet}, \cite{friedman2010}, was used to compute the lasso solution path. This section is divided into subsections describing estimators and risk estimates, the design of the simulation study, and the results of the simulation study.

\subsection{Estimators and risk estimates}

The first alternative risk estimate for lasso-OLS is
    \begin{equation}
	\label{eq:risk_est}
	\widehat{\mathrm{Risk}}_{\mathrm{df}_{\mathrm{S}}}=\|Y-\hat{\mu}_{\textnormal{l-OLS}}^{\lambda}\|_2^2-n\sigma^2+2\sigma^2\mathrm{dim}(\hat{S}^{\lambda}),
	\end{equation}
	which does not adjust for the variable selection performed by lasso-OLS. The second alternative is $K$-fold cross-validation (denoted $\widehat{\mathrm{Risk}}_{\textnormal{CV-K}}$) with $K=5,10$. This risk estimate is given by
        \begin{equation}
        \label{eq:risk_cv}
        \widehat{\mathrm{Risk}}_{\textnormal{CV-K}}\coloneqq \sum_{k=1}^{K}{\| Y_k-X_k\hat{\beta}_{\textnormal{l-OLS}}^{\lambda}(Y_{-k},X_{-k}) \|^2_2} - n\sigma^2, 
        \end{equation}
        where $Y_k$ and $X_k$ denote the entries of $Y$ and rows of $X$, respectively, corresponding to the $k$th fold, and similarly, $Y_{-k}$ and $X_{-k}$ denote the entries and rows not in the $k$th fold. 

The lasso estimator was tuned by minimising the risk estimate 
	\begin{equation}
		\label{eq:risk_est_lasso}
		\widehat{\mathrm{Risk}}_{\mathrm{lasso}}=\|Y-\hat{\mu}_{\mathrm{lasso}}^{\lambda}\|_2^2-n\sigma^2+2\sigma^2\mathrm{dim}(\hat{S}^{\lambda}).
		\end{equation}

For $\mathrm{tuning}\in \{\mathrm{df}, \mathrm{df}_{\mathrm{S}}, \textnormal{CV-5}, \textnormal{CV-10},\mathrm{lasso} \}$ we let $\hat{\lambda}_{\mathrm{tuning}}$ denote the value of $\lambda$ that minimises $\widehat{\mathrm{Risk}}_{\mathrm{tuning}}$. The risk of the resulting estimator is denoted
$$\mathrm{Risk}(\mathrm{tuning}) \coloneqq E \|\mu - \hat{\mu}_{\textnormal{l-OLS}}^{\hat{\lambda}_{\mathrm{tuning}}}\|_2^2$$
for all but the $\mathrm{lasso}$-tuning, whose risk instead is 
$$\mathrm{Risk}(\mathrm{lasso}) \coloneqq E \|\mu - \hat{\mu}_{\mathrm{lasso}}^{\hat{\lambda}_{\mathrm{lasso}}}\|_2^2.$$

When the true mean is $\mu = X \beta$ with $\mathrm{supp}(\beta) = A$ we refer to $\Pi_A$ as the oracle-OLS estimator. This usage of the oracle terminology is in accordance with e.g. \cite{fan2014}. Its risk is 
$$E\|\mu - \Pi_A Y\|_2^2 = \sigma^2 \mathrm{rank}(X_A).$$
The results from the simulation study are reported in terms of $\mathrm{Risk}(\mathrm{tuning}) / (\sigma^2 n)$ for each tuning method, which can then be compared to $\mathrm{rank}(X_A) / n$ -- the fraction of nonzero parameters.

All simulations were carried out assuming either that $\sigma^2$ was known or using the following estimator of $\sigma^2$: first the lasso path $\lambda \mapsto \hat{\mu}_{\mathrm{lasso}}(\lambda)$ was calculated, then $\hat{\lambda}$ was selected by minimising the generalized cross-validation criterion
$$ \mathrm{gcv}(\lambda) = \frac{\|Y - \hat{\mu}_{\mathrm{lasso}}^\lambda\|_2^2}{\left(1 - \frac{\mathrm{dim}(\hat{S}^{\lambda})}{n}\right)^2}, $$
and $\sigma^2$ was finally estimated as 
$$\hat{\sigma}^2 = \frac{\|Y - \hat{\mu}^{\hat{\lambda}}_{\mathrm{lasso}}\|^2_2}{n - \mathrm{dim}(\hat{S}^{\lambda})}.$$ 
The main reason for choosing this estimator was computational efficiency, as the lasso path must be calculated for lasso-OLS anyway. Thus this variance estimate has virtually no extra computational costs. See also \cite{Reid:2015} for a comprehensive comparison of variance estimators.

\subsection{Simulation study design} \label{sec:simdesign}

In the simulation study the mean was given as $X \beta$ with 
        $$ \beta_i=\begin{cases}
	\gamma^{i-1} & \text{if } i\leq \lceil n \alpha \rceil\\
	0 & \mathrm{otherwise}
	\end{cases}$$
        for different choices of the dimension $n$, the $n \times p$ design matrix $X$ and the parameters $\gamma$ and $\alpha$. 

Two simulation designs were implemented with parameters as follows:
\begin{center}
\begin{tabular}{ccc}
\begin{tabular}{c}
Parameter \\
\hline
$\sigma$  \\
$\alpha$  \\
$n$  \\
$p$ \\
$\gamma$ \\
$X$ \\
$\rho$
\end{tabular}
& 
\begin{tabular}{ccccc}
\multicolumn{5}{c}{Values for simulation study I} \\
\hline
0.5 \\
0.1 \\
50 & 100 & 200 & 400 & 800  \\
200 & 2000 & 20000 \\
1 \\
S  \\
0.1  
\end{tabular}
& 
\begin{tabular}{ccccc}
\multicolumn{5}{c}{Values for simulation study II} \\
\hline
0.1 & 0.2 & 0.5 & 1 & 2  \\
0 & 0.05 & 0.1 & 0.3 & 0.5 \\
100 & 200 & & & \\
$n$ \\
1 & 0.9 & & & \\
O & S & E & & \\
0 & 0.1 & 0.4 & 0.7 \\
\end{tabular} 
\end{tabular}
\end{center}

The parameter $\rho$ and the values of the design require some explanation. The three different design types are:
\begin{itemize}
\item Orthogonal (O), where $X = I$. 
\item Simulated (S), where the columns of $X$ are standard normally distributed with one of the following correlation structures:
\begin{itemize}
 \item Autoregressive setup: $\mathrm{corr}(X_i, X_j) = \rho^{|i-j|}$ for all $i\neq j$.
 \item Constant correlation setup: $\mathrm{corr}(X_i, X_j) = \rho$ for all $i\neq j$.
\end{itemize}
\item Empirical (E), where the rows and columns are randomly selected from the $240\times377$ matrix of microRNA expression values as used in the earlier study by \cite{vincent2014}. 
\end{itemize}
The columns of the simulated and empirical designs were standardized to have norm one to obtain a comparable signal-to-noise ratio across the three designs.

The risk estimates were based on 1000 samples for each combination of the parameters, which were generated as follows. For each of the 1000 samples a design matrix $X$ was created/simulated and a single realization of $Y\sim\mathcal{N}(X\beta,\sigma^2I_n)$ was drawn. For each sample the losses $\|\mu - \hat{\mu}_{\mathrm{lasso}}^{\hat{\lambda}_{\mathrm{lasso}}}\|^2_2$  and  $\|\mu - \hat{\mu}_{\textnormal{l-OLS}}^{\hat{\lambda}_{\mathrm{tuning}}}\|^2_2$ for the different tuning methods were computed. The risks were estimated as the average of the losses over the 1000 samples.

In order to assess robustness to deviations from the Gaussian noise assumption, we replicated the second study design with two types of non-Gaussian noise: a $t$-distribution with 3 degrees of freedom, and a skew normal distribution with shape parameter 3. Location and scale parameters were set so that the noise distribution had mean 0 and variance $\sigma^2$.

\begin{figure}
    	\centering
    	\includegraphics[width=\textwidth]{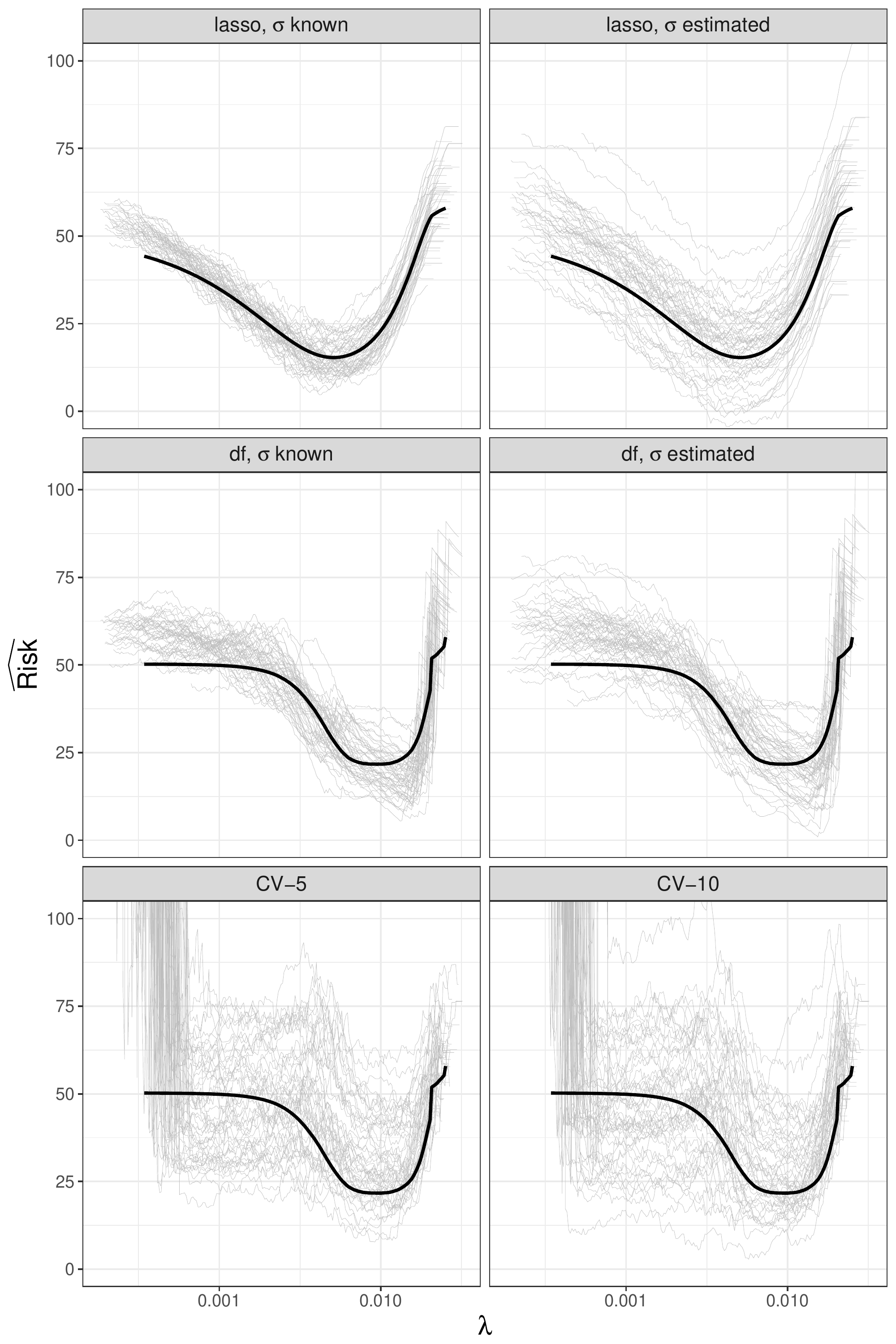}
    	\caption{Risk estimates $\widehat{\mathrm{Risk}}_{\mathrm{df}}$, $\widehat{\mathrm{Risk}}_{\textnormal{CV-5}}$, $\widehat{\mathrm{Risk}}_{\textnormal{CV-10}}$ and $\widehat{\mathrm{Risk}}_{\mathrm{lasso}}$ (gray lines) for 50 samples as a function of $\lambda$. The black lines are Monte Carlo estimates of the true risks. The design parameters were: $n=200$, $p=2000$, $\sigma=0.5$, $\gamma =1$, $\alpha =0.1$,  and the design type was (S) with a constant correlation of $\rho = 0.1$ (see Section \ref{sec:simdesign}).}
    	\label{fig:risk_cloud}
\end{figure}

\begin{figure}
    	\centering
    	\includegraphics[width=\textwidth]{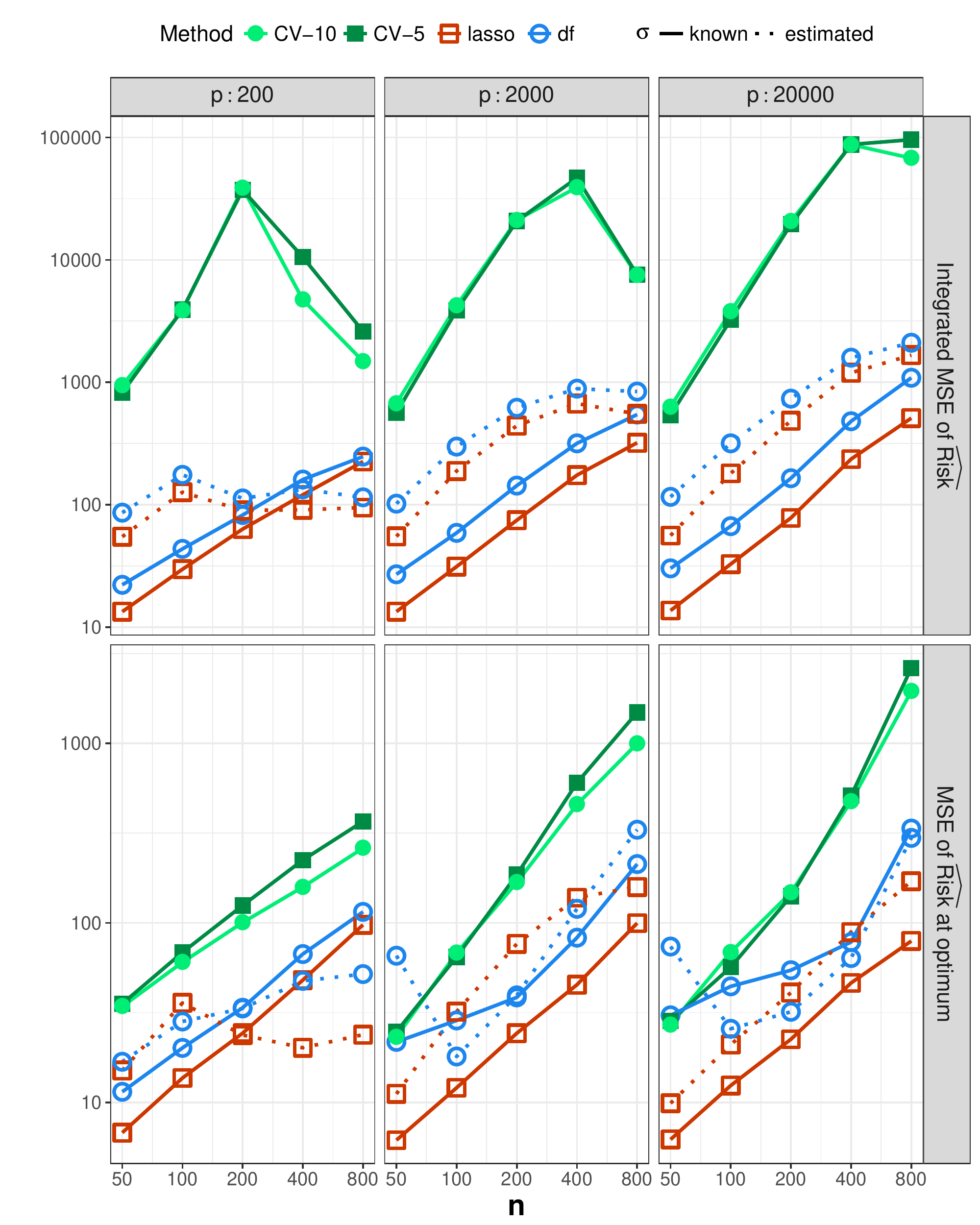}
    	\caption{Integrated mean squared error (top) and mean squared error at the optimal value of $\lambda$, $\hat{\lambda}$ (bottom) of the risk estimates  $\widehat{\mathrm{Risk}}_{\mathrm{df}}$, $\widehat{\mathrm{Risk}}_{\textnormal{CV-5}}$, $\widehat{\mathrm{Risk}}_{\textnormal{CV-10}}$ and $\widehat{\mathrm{Risk}}_{\mathrm{lasso}}$. The integrated mean squared error was computed over the interval $[\hat{\lambda}/10,10\hat{\lambda} ]$ of $\log(\lambda)$-values. The design parameters were: $\sigma=0.5$, $\gamma =1$, $\alpha =0.1$,  and the design type was (S) with a constant correlation of $\rho = 0.1$ (see Section  \ref{sec:simdesign})}
    	\label{fig:MSEs}
\end{figure}	

\subsection{Results from study I}

We first report on the accuracy of the risk estimates. Figure \ref{fig:risk_cloud} shows the risk estimates as a function of $\lambda$ for 50 samples along with a Monte Carlo estimate of the true risk. Cross-validation appears to give more variable estimates of the risk than $\widehat{\mathrm{Risk}}_{\textrm{df}}$ across the entire range of $\lambda$-values. This is true even when the variance is estimated, though estimation of the variance does appear to degrade the performance of the risk estimates. We note that $\widehat{\mathrm{Risk}}_{\textrm{df}}$ does not appear to be much more variable than $\widehat{\mathrm{Risk}}_{\mathrm{lasso}}$, though the former relies on the additional smoothed term for the estimation of degrees of freedom. 

Figure \ref{fig:MSEs} shows mean squared errors (MSEs) for the risk estimates. The figure shows the integrated mean sequared error as well as the mean squared error in the optimal $\lambda$ (the $\lambda$ that minimizes risk as estimated from the Monte Carlo estimate of the risk based on 1000 replications). The cross-validation risk estimates generally have the largest MSEs, while $\widehat{\mathrm{Risk}}_{\textrm{df}}$ has considerably smaller MSEs. This is true even when the variance is estimated except for $n = 50$ and $p = 2000, 20000$. From this figure we see that $\widehat{\mathrm{Risk}}_{\textrm{df}}$ does have a larger MSE than $\widehat{\mathrm{Risk}}_{\mathrm{lasso}}$. Moreover, for $n / p$ large the estimation of $\sigma$ does not affect the MSE of the risk estimates much. 

For this simulation study we also recorded the number of selected predictors as well as the computational time for evaluating and tuning the different estimators. The results can be found as Figure 1 in the supplementary material. The lasso-OLS estimator selects fewer predictors than lasso, but when the variance is estimated, the number of selected predictors is increased -- this is particularly so when $n / p$ is small. The lasso estimator using \eqref{eq:risk_est_lasso} for tuning is fastest, which is unsurprising as the computation of the lasso path is part of all estimators. Moreover, the lasso-OLS estimator using \eqref{eq:riskhat} for tuning is about a factor 4 faster than using 5-fold cross-validation for tuning and about a factor 8 faster than 10-fold cross-validation. Thus the added computation of the smoothed term to the estimate of degrees of freedom in  \eqref{eq:riskhat} has an insignificant effect on the computation time. 

\clearpage 

\subsection{Results from study II}

\begin{figure}
    	\centering
    	\includegraphics[width=\textwidth]{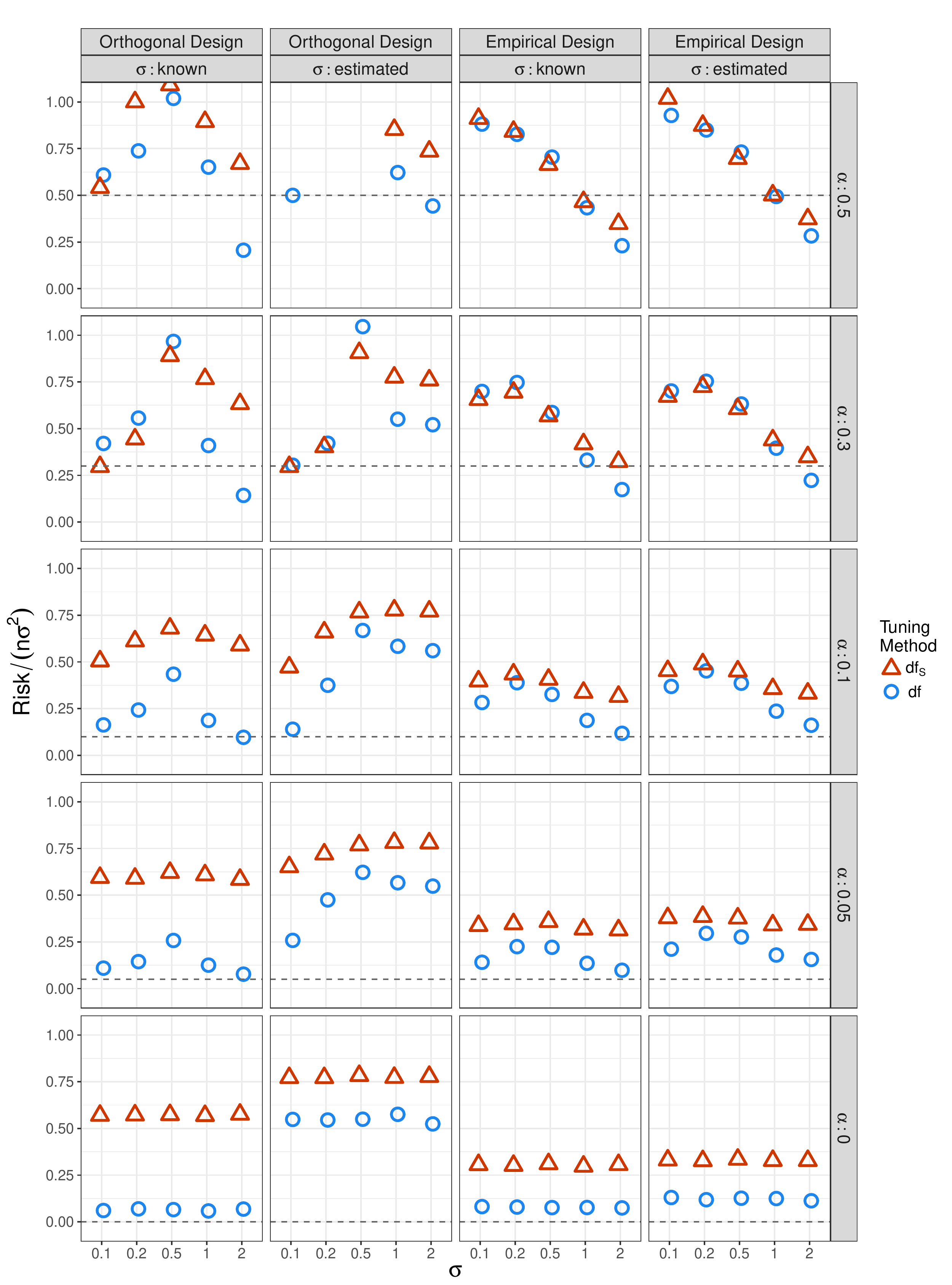}
    	\caption{Risk relative to $\sigma^2 n$ for the estimators $\hat{\mu}_{\textnormal{l-OLS}}^{\hat{\lambda}_{\mathrm{df}_{\mathrm{S}}}}$ and $\hat{\mu}_{\textnormal{l-OLS}}^{\hat{\lambda}_{\mathrm{df}}}$ for orthogonal and empirical designs with $n=100$ and $\gamma=1$. The dashed line is $\lceil n \alpha \rceil / n \simeq \alpha$, the relative risk for the oracle-OLS estimator.}
    	\label{fig:box}
\end{figure}	

\begin{figure}
    	\centering
    	\includegraphics[width=\textwidth]{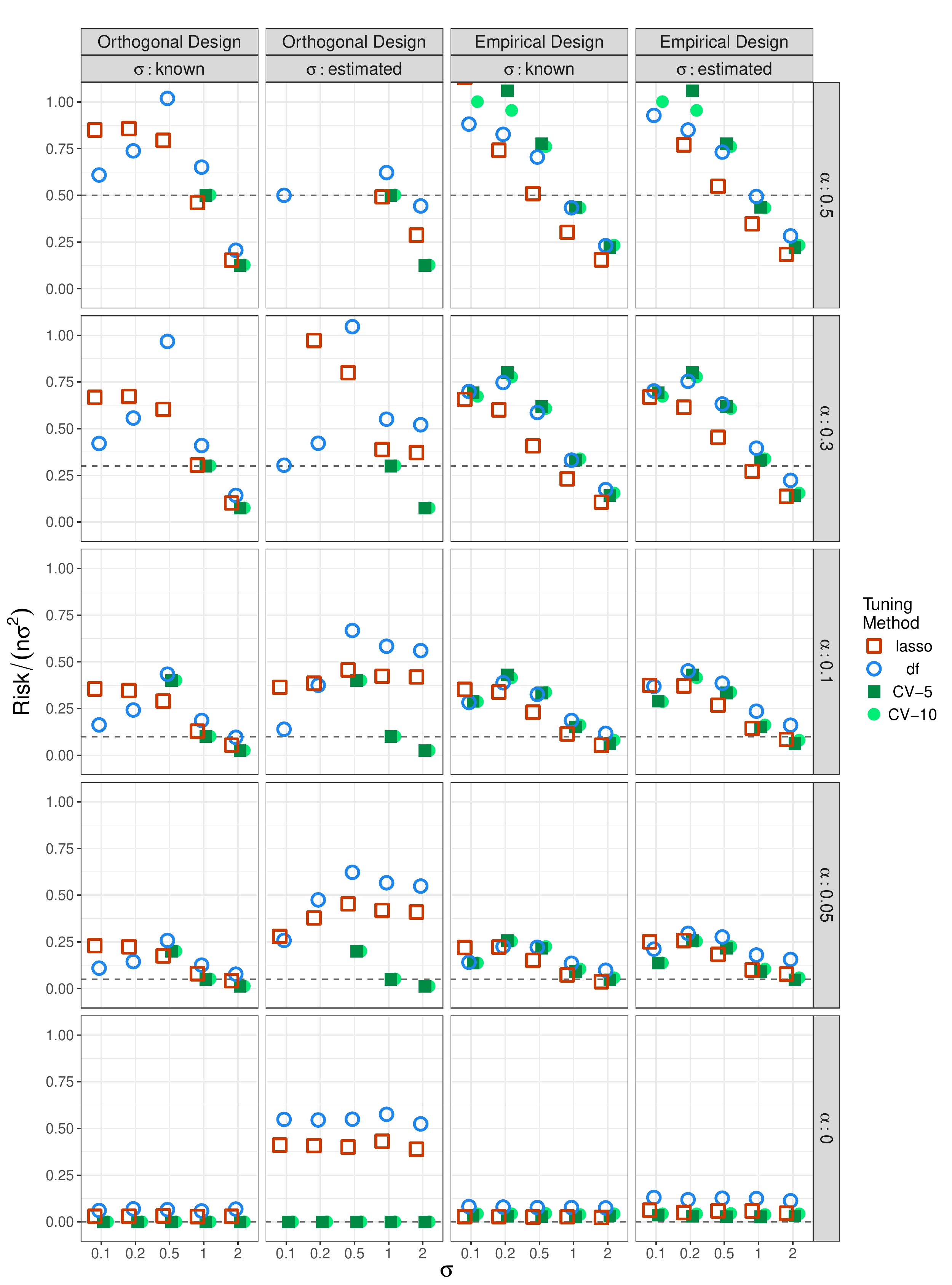}
    	\caption{Risk relative to $\sigma^2 n$ for the estimators $\hat{\mu}_{\textnormal{l-OLS}}^{\hat{\lambda}_{\mathrm{df}}}$,
    	$\hat{\mu}_{\textnormal{l-OLS}}^{\hat{\lambda}_{\textnormal{CV-5}}}$,     	    	$\hat{\mu}_{\textnormal{l-OLS}}^{\hat{\lambda}_{\textnormal{CV-10}}}$ and 
    	$\hat{\mu}_{\mathrm{lasso}}^{\hat{\lambda}_{\mathrm{lasso}}}$ for orthogonal and empirical designs with $n=100$ and $\gamma=1$. The dashed line is $\lceil n \alpha \rceil / n \simeq \alpha$, the relative risk for the oracle-OLS estimator.}
    	\label{fig:box2}
\end{figure}

Firstly, we discuss the comparison of the two tuning methods $\mathrm{df}$ and $\mathrm{df}_{\mathrm{S}}$ for the lasso-OLS estimator. The purpose of this comparison is to highlight the effect of correctly adjusting for the variable selection in the estimation of degrees of freedom via the term $\widehat{\partial}$. Secondly, we discuss the comparison of $\mathrm{df}$ to $\textnormal{CV-5}$, $\textnormal{CV-10}$ and lasso. The purpose of this second comparison is two-fold. It provides a comparison of our proposed tuning method, $\mathrm{df}$, to cross-validation based tuning, and it provides a comparison of lasso-OLS to lasso in terms of predictive performance. 

Figure \ref{fig:box} shows the results for the two tuning methods $\mathrm{df}$ and $\mathrm{df}_{\mathrm{S}}$ in the orthogonal and empirical designs with $\gamma = 1$ and $n = 100$. The supplementary material contains the results for all the other design parameters. Tuning $\lambda$ by using $\mathrm{dim}(\hat{S}^{\lambda}) + \widehat{\partial}$ as an estimate of degrees of freedom is generally superior to using $\mathrm{dim}(\hat{S}^{\lambda})$ and in the worst cases at least comparable. The differences are largest for the lowest signal-to-noise ratios. The benefit of using $\mathrm{dim}(\hat{S}^{\lambda}) + \widehat{\partial}$ generally increases with the dimension $n$, and it increases with decreasing signal-to-noise ratio. Furthermore, when the number of non-zero parameters is large and the signal-to-noise ratio is low (specifically, $\gamma = 0.9$, $\alpha$ large and $\sigma$ large), $\hat{\mu}_{\textnormal{l-OLS}}^{\hat{\lambda}_{\mathrm{df}}}$ clearly outperforms the oracle-OLS estimator, while $\hat{\mu}_{\textnormal{l-OLS}}^{\hat{\lambda}_{\mathrm{df}_{\mathrm{S}}}}$ is comparable or worse than the oracle-OLS estimator. Neither of the estimators performs well for small variances and large signal-to-noise ratios. For the orthogonal design the estimation of the variance incurs a clear performance loss, which is not the case for the other designs. We ascribe this to the variance estimator being particularly poor for the orthogonal design.

Figure \ref{fig:box2} shows the results for $\mathrm{df}$, $\textnormal{CV-5}$, $\textnormal{CV-10}$ and lasso for the orthogonal and empirical designs with $\gamma = 1$ and $n = 100$. The results for the remaining design parameters are found in the supplementary material. For the orthogonal design cross-validation is not an appropriate tuning method, since $\widehat{\mathrm{Risk}}_{\textnormal{CV-K}}$ is constant in $\lambda$. This relates to the fact that the folds cannot be considered replications of the same distribution. Consequently, for the orthogonal design, the tuning methods based on degrees of freedom have clear advantages. On the other hand, the estimation of $\sigma$ has a quite large negative effect for precisely the orthogonal design. 

When restricting attention to the non-orthogonal designs we observe that the tuning methods are quite comparable (see the supplementary material). None of the tuning methods are generally superior or inferior to the others and their performance depends on both design type, signal-to-noise ratio and the signal decay parameter $\gamma$. The lasso estimator deviates most from the others, which is mainly due to this being a different estimator. It performs best at low signal-to-noise ratios, while lasso-OLS using either cross-validation of $\mathrm{df}$ tuning  performs better at high signal-to-noise ratios ($\alpha$ large, $\sigma$ small and $\gamma = 1$). Cross-validation appears to perform best for highly correlated designs ($\rho$ large).  

The results for the non-Gaussian error distributions are included in the supplementary material as well. There are no major differences when compared to the Gaussian error distribution, with the most notable change being that lasso losses some of its performance for the $t$-distributed noise. The tuning based on $\mathrm{df}$ seems to be less affected. Still, all the tuning methods are generally comparable except for orthogonal designs. Since cross-validation does not rely on a Gaussian noise assumption, these results suggest that our proposed tuning method based on $\mathrm{df}$ is appropriate even in non-Gaussian settings.

	\section{Best Subset Selection}
        \label{sec:best}

        Example \ref{exam:U_A} demonstrates that \eqref{eq:set_cond} holds for other estimators than lasso-OLS, and Theorem \ref{theo:scaling} holds, in particular, for best subset selection in the Lagrangian formulation, which corresponds to $\mathrm{Pen}(\cdot) = \|\cdot\|_0$ in Example \ref{exam:U_A}. Theorem \ref{theo:debiasedlasso} does, however, only partly extend to best subset selection. In this section we demonstrate that this may still provide a practically useful estimate of degrees of freedom. 

The best subset selection estimator of $\mu$ with tuning parameter $\lambda>0$, denoted by $\hat{\mu}_{\mathrm{bs}}^{\lambda}$, is
		\[ \hat{\mu}_{\mathrm{bs}}^{\lambda}=X\hat{\beta}^{\lambda} \quad\mathrm{ where }\quad \hat{\beta}^{\lambda}=\underset{\beta}{\argmin} \ \frac{1}{2}\|Y-X\beta\|^2_2+\lambda\|\beta\|_0.   \]
		It can be written on the form $\hat{\mu}_{\mathrm{bs}}^{\lambda}=\sum_{A\in\{1,...,p\}}{1_{U_A^{\lambda}}\Pi_A}$ (Lebesgue a.e.), where
		\begin{equation}
		\label{eq:U_A_bs}
		U_A^{\lambda}:=\left\{ y\in \mathbb{R}^n \ \bigg\vert \ \lambda|A|-\frac{1}{2}\|\Pi_Ay\|^2_2 < \underset{B\in \{1,...,p\}\setminus A}{\min}\lambda|B|-\frac{1}{2}\|\Pi_By\|^2_2 \right\}, \qquad A\subset\{1,...,p\}. 
		\end{equation}
                It is straightforward to verify that $\hat{\mu}_{\mathrm{bs}}^{\lambda}$ fulfils Assumption \ref{ass:assumption1} except \ref{ass:assumption1}\eqref{itm:ass_3}, which follows by 
		Lemma \ref{lem:semialgebraic} in the appendix. Hence Theorem \ref{theo:singular_repres} applies to $\hat{\mu}_{\mathrm{bs}}^{\lambda}$. 	 
	
 \begin{figure}
		\centering
		\includegraphics[width=0.8\textwidth]{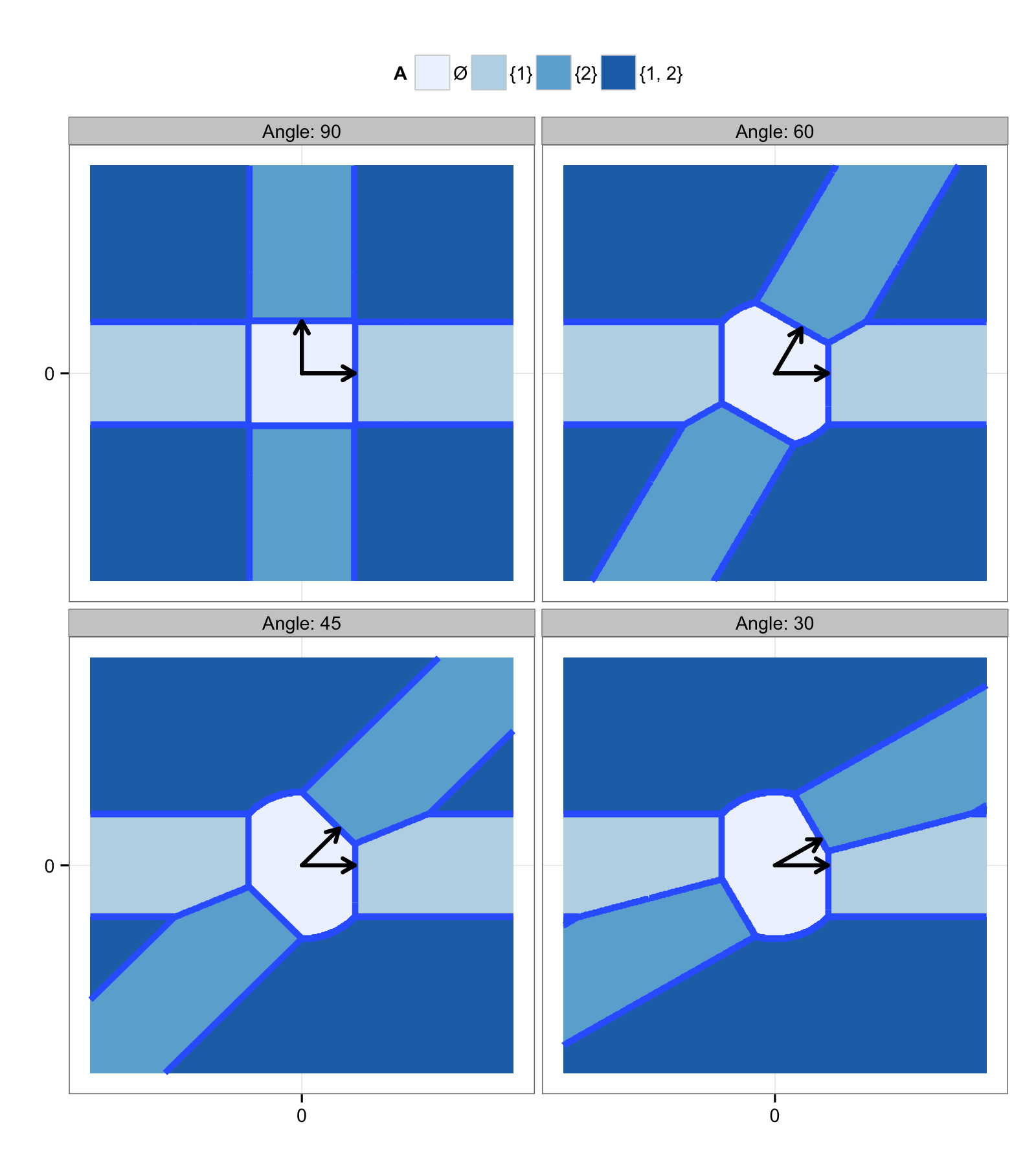}
		\caption{Illustrations of the decomposition of $\mathbb{R}^2$ into the four sets $U_{\emptyset}^1$, $U_{\{1\}}^1$, $U_{\{2\}}^1$ and $U_{\{1, 2\}}^1$ according to the best subset selection estimator in the Lagrangian formulation with $\lambda = 1$. The set $U_{\emptyset}^1$ consists of the points projected onto the 0-dimensional space $\{0\}$, the sets $U_{\{1\}}^1$, $U_{\{2\}}^1$ to the projections onto one of the two $1$-dimensional subspaces and $U_{\{1, 2\}}^1$ to the identity map. The decomposition depends on the angle between the two columns in $X$.}
		\label{fig:bss_decomp}
	\end{figure}

	 From \eqref{eq:U_A_bs} we note that the outer unit normal to $\partial U_{A_1}^{\lambda}$ on $\overline{U}_{A_1}^{\lambda} \cap\overline{U}_{A_2}^{\lambda}$ equals $(\Pi_{A_2}-\Pi_{A_1})y$ normalized to have norm 1. Theorem \ref{theo:singular_repres} yields
	\begin{equation*}
%{\small
	\begin{aligned}
	\mathrm{df}(\hat{\mu}_{\mathrm{bs}}^{\lambda})-\mathrm{df}_{\mathrm{S}}(\hat{\mu}_{\mathrm{bs}}^{\lambda}) &= \frac{1}{2}\sum_{A_1\neq A_2}{  \int_{ \overline{U}_{A_1}^{\lambda}\cap \overline{U}_{A_2}^{\lambda}}{ \frac{\left\langle (\Pi_{A_2}-\Pi_{A_1})y,(\Pi_{A_2}-\Pi_{A_1})y\right\rangle}{\|(\Pi_{A_2}-\Pi_{A_1})y\|_2}  \psi(y)\  d\mathcal{H}^{n-1}(y) } }\\
	&=\frac{1}{2}\sum_{A_1\neq A_2}{  \int_{ \overline{U}_{A_1}^{\lambda}\cap \overline{U}_{A_2}^{\lambda}}{   \left\|(\Pi_{A_2}-\Pi_{A_1})y\right\|_2 \psi(y)\ d\mathcal{H}^{n-1}(y) } },
	\end{aligned}
%}
	\end{equation*}
	which proves that $\mathrm{df}>\mathrm{df}_{\mathrm{S}}$ for best subsection selection. Moreover, Proposition \ref{theo:scaling} and Example \ref{exam:U_A} yields
	\begin{equation*}
{\small
	\begin{aligned}
	-2\lambda\partial_{\lambda}\mathrm{df}_{\mathrm{S}}(\hat{\mu}_{\mathrm{bs}}^{\lambda}) &=\frac{1}{2}\sum_{A_1\neq A_2}{  \int_{ \overline{U}_{A_1}^{\lambda}\cap \overline{U}_{A_2}^{\lambda}}{  \psi(y) \frac{ \langle y, (\Pi_{A_2}-\Pi_{A_1})y \rangle}{\left\|(\Pi_{A_2}-\Pi_{A_1})y\right\|_2} (|A_2|-|A_1|) \ d\mathcal{H}^{n-1}(y) } }.
	\end{aligned}
}
	\end{equation*}
	For $\mathrm{col}(X_{A_1}) \subseteq \mathrm{col}(X_{A_2})$ and $\mathrm{rank}(X_{A_2})=\mathrm{rank}(X_{A_1})+1$, we see that the integrands in the two identities above coincide. Hence, if we define
	\begin{align*}
	\mathcal{A}_1&:=\left\{ A_1,A_2\subseteq\{1,...,p\} \ \bigg\vert \ \begin{aligned}
	\mathrm{col}(X_{A_1}) \subseteq \mathrm{col}(X_{A_2}) \text{ and } \\
	\mathrm{rank}(X_{A_2})=\mathrm{rank}(X_{A_1})+1
	\end{aligned}   \right\} \quad \text{and}\\
		\mathcal{A}_2&:=\left\{ A_1,A_2\subseteq\{1,...,p\} \ \bigg\vert \mathrm{col}(X_{A_1}) \neq \mathrm{col}(X_{A_2}) \text{ and } \begin{aligned}
		(A_1,A_2)\notin \mathcal{A}_1\\
		(A_2,A_1)\notin \mathcal{A}_1
		\end{aligned}\right\},
	\end{align*}
	then
$$\mathrm{df}(\hat{\mu}_{\mathrm{bs}}^{\lambda})-\mathrm{df}_{\mathrm{S}}(\hat{\mu}_{\mathrm{bs}}^{\lambda}) = -2\lambda\partial_{\lambda}\mathrm{df}_{\mathrm{S}}(\hat{\mu}_{\mathrm{bs}}^{\lambda}) + R$$
where
{\footnotesize 
$$R = \frac{1}{2}\sum_{(A_1,A_2)\in\mathcal{A}_2}{ \int_{ \overline{U}_{A_1}^{\lambda}\cap \overline{U}_{A_2}^{\lambda}} {  \frac{   \langle (\Pi_{A_2}-\Pi_{A_1})y, \left(\Pi_{A_2}-\Pi_{A_1} - (|A_2|-|A_1|)I_n\right)y \rangle  }{\left\|(\Pi_{A_2}-\Pi_{A_1})y\right\|_2}\psi(y) \ d\mathcal{H}^{n-1}(y) }}$$
} 

	\begin{figure}
		\centering
		\includegraphics[width = \textwidth]{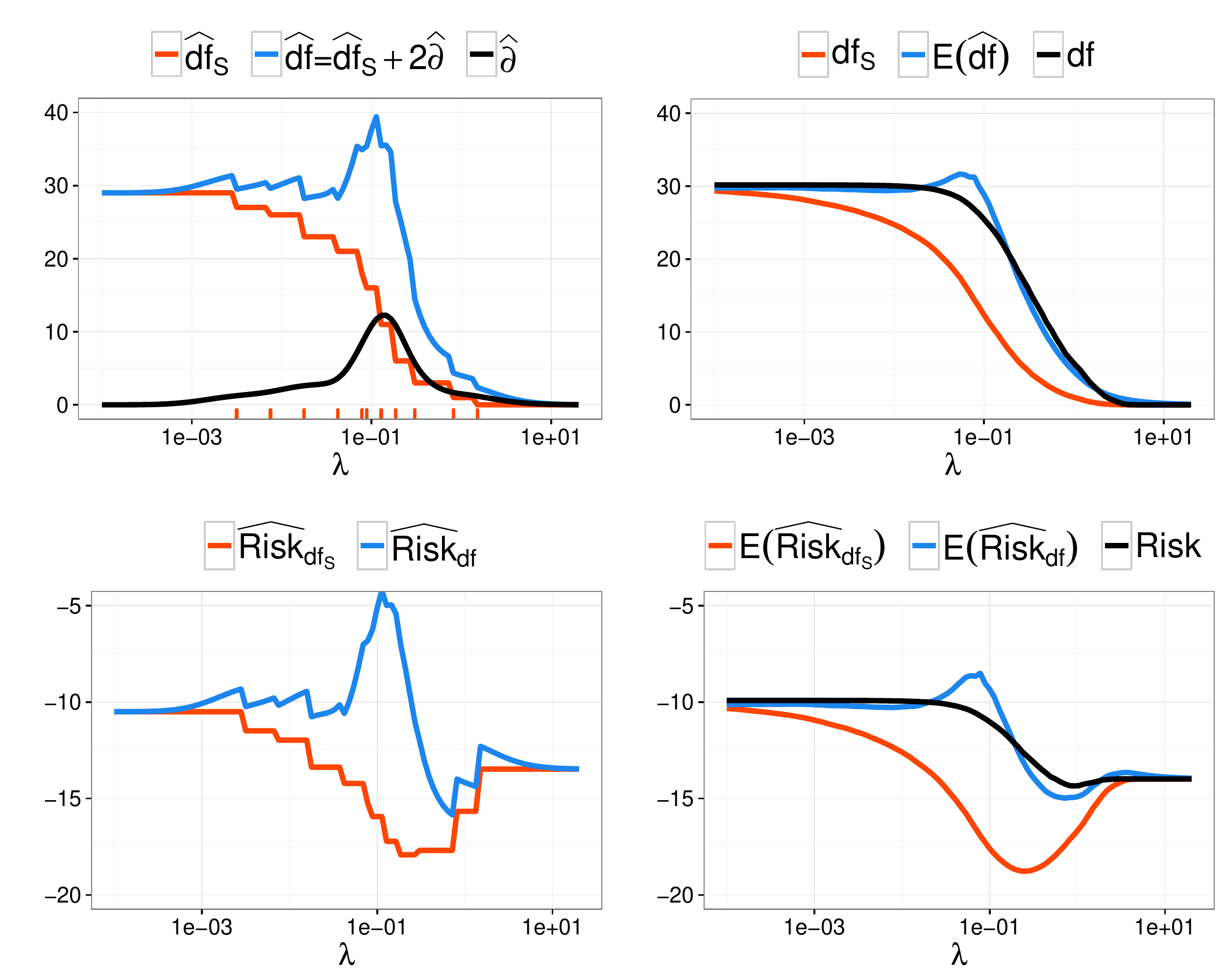}
		\caption{
Left: Realization of the estimates of degrees of freedom $\hat{\mathrm{df}}_S = \mathrm{dim}(\hat{S}^{\lambda})$ and $\hat{\mathrm{df}} = \mathrm{dim}(\hat{S}^{\lambda}) + 2\widehat{\partial}$ as well as the correction term $\widehat{\partial}$ as a function of $\log(\lambda)$ for best subset selection (top) and corresponding estimates of the risk (bottom). Right: Similar to the left but mean values of the estimates obtained by averaging over 1000 samples along with the degrees of freedom $\mathrm{df} = \mathrm{df}(\hat{\mu}_{\mathrm{bs}}^{\lambda})$ obtained from the 1000 samples using the covariance definition \eqref{eq:df_def}. The design parameters were: $\sigma=0.5$, $n=p=30$, $\gamma =1$, $\alpha =0.1$ and the design type was (S) with constant correlation of $\rho = 0.1$ (see Section \ref{sec:sim}).}
		\label{fig:bss}
	\end{figure}	

The usefulness of this hinges on $R$ being small. For $X$ orthogonal we have already demonstrated that $R = 0$ as $\hat{\mu}_{\mathrm{bs}}^{\lambda}$ then coincides with lasso-OLS, and in this case $\overline{U}_{A_1}^{\lambda}\cap \overline{U}_{A_2}^{\lambda}$ has Hausdorff measure zero for all $(A_1, A_2) \in \mathcal{A}_2$. For non-orthogonal $X$ this is no longer true, see Figure \ref{fig:bss_decomp}. For best subset selection there will generally be boundaries of non-zero Hausdorff measure between many more of the sets $\overline{U}_{A}^{\lambda}$ -- boundaries that correspond to including or excluding more than one predictor at the time or replacing predictors. Compare this with lasso-OLS and Figure \ref{fig:dlasso_decomp}. However, by continuity in $X$ we have $R \to 0$ for $X$ tending to an orthogonal matrix, and we can expect $R$ to be small for matrices that are not too far from orthogonal matrices. Thus we expect 
\begin{equation}
\label{eq:approxdf}
\mathrm{df}_{\mathrm{S}}(\hat{\mu}_{\mathrm{bs}}^{\lambda})  -2\lambda\partial_{\lambda}\mathrm{df}_{\mathrm{S}}(\hat{\mu}_{\mathrm{bs}}^{\lambda})
\end{equation}
	to be a useful approximation for $\mathrm{df}(\hat{\mu}_{\mathrm{bs}}^{\lambda})$ also for non-orthogonal $X$. 

Using the same procedure for estimating the correction $-2\lambda\partial_{\lambda}\mathrm{df}_{\mathrm{S}}(\hat{\mu}_{\mathrm{bs}}^{\lambda})$ as outlined in Section \ref{sec:l-OLS} -- using $2\widehat{\partial}$ instead of $\widehat{\partial}$ -- we used simulations to investigate if \eqref{eq:approxdf} was actually a good approximation of $\mathrm{df}(\hat{\mu}_{\mathrm{bs}}^{\lambda})$. Figure \ref{fig:bss} shows the results using the same configurations as in Figure \ref{fig:dlasso}, except that $n$ was lowered to 30 due to computational constraints. The conclusion from this and other similar simulations (not shown) is that even with non-orthogonal designs, \eqref{eq:approxdf} is a practically useful approximation. That is,  $-2\lambda\partial_{\lambda}\mathrm{df}_{\mathrm{S}}(\hat{\mu}_{\mathrm{bs}}^{\lambda})$ accounts for the majority of the increase in the degrees of freedom due to variable selection. 

	\clearpage
	
	\section{Discussion}
 	We have provided a new representation of degrees of freedom for a broad class of discontinuous, piecewise Lipschitz estimators. This representation provides us with a deeper insight into the effect of variable selection, among other things, on the effective dimension of the statistical model and the estimator used. We have demonstrated that for lasso-OLS it was, moreover, possible to derive a practically useful estimator of the degrees of freedom based on the general representation, and we also suggest that a similar estimator can be useful for best subset selection. The estimator was based on relating the derivative of $\lambda \mapsto \mathrm{df}_{\mathrm{S}}(\hat{\mu}^{\lambda})$ to the discontinuities of the estimator $\hat{\mu}^{\lambda}$ as expressed via the integral representation of $\mathrm{df}(\hat{\mu}^{\lambda}) - \mathrm{df}_{\mathrm{S}}(\hat{\mu}^{\lambda})$. This does, indeed, make some intuitive sense as the first expresses the mean jump of degrees of freedom per unit change of $\lambda$ and the other (in some sense) the mean discontinuity of degrees of freedom per unit change of $y$. Changing $\lambda$ for fixed $y$ or changing $y$ for fixed $\lambda$ are dual operations, and it is not surprising that we can relate the numbers. 

A simulation study demonstrated that the risk of the lasso-OLS estimator can be estimated effectively by using our proposed estimate of degrees of freedom. Our proposal did not incur any substantial computational penalty, nor did it incur a substantial increase in the variance of the risk estimate. The simulation study also showed that lasso-OLS can be effectively tuned by minimising our proposed risk estimate, and that the resulting computations are faster than using cross-validation. The resulting lasso-OLS estimator selects fewer predictors than lasso with a comparable predictive performance, but it is computationally more expensive. 

If we were to generalize our results to other estimators that include a tuning parameter, we expect that it is only the derivative of the part of $\mathrm{df}_{\mathrm{S}}(\hat{\mu}^{\lambda})$ that corresponds to jumps that can be related to $\mathrm{df}(\hat{\mu}^{\lambda}) - \mathrm{df}_{\mathrm{S}}(\hat{\mu}^{\lambda})$. That is, in general, $\lambda \mapsto \mathrm{div}(\hat{\mu}^{\lambda})$ will have jumps as well as smooth but non-constant pieces, and it is only the expectation of the jump part that we expect can be related to $\mathrm{df}(\hat{\mu}^{\lambda}) - \mathrm{df}_{\mathrm{S}}(\hat{\mu}^{\lambda})$. We believe that our suggested estimator of degrees of freedom may actually be generalizable to a number of discontinuous estimators involving variable selection as well as shrinkage. The requirement will be that the estimator has one or more tuning parameters and that it is computed on a grid or along a path of these. Then we can potentially estimate the derivative of the divergence of the estimator as a function of the tuning parameter(s). It is an ongoing research project to investigate this in detail. 

For best subset selection we did not provide any bounds on the residual $R$ in the approximation of $\mathrm{df}(\hat{\mu}^{\lambda}) - \mathrm{df}_{\mathrm{S}}(\hat{\mu}^{\lambda})$. It would, indeed, be very interesting to investigate this approximation in more detail. It would, in particular, be interesting to understand if it in any way can be seen as a ``first order approximation'' and whether there are higher order terms worth including in some cases.  

Finally, we have restricted attention to Gaussian noise in the theoretical derivations. Like Stein's classical lemma, Theorem \ref{theo:singular_repres} crucially relies on this assumption. Our simulation study demonstrated some robustness towards deviations from this assumption. However, extensions of Stein's lemma to non-Gaussian distributions do exist (see, e.g., \cite{dalalyan2008}), but further investigations are required to determine if similar extensions can be made in the more general framework presented in this paper. 

%Finally, we have restricted attention to the situation $n \geq p$ throughout. In many applications involving variable selection we have $n \ll p$. The general representation does hold for $n < p$ and our suggested estimator of degrees of freedom should also work for lasso-OLS, but as $X$ then no longer has linearly independent columns, some of the arguments, as those in Example \ref{exam:exam_of_assumption1}, become more cumbersome. Hence for the sake of presentation we assumed $n \geq p$.
	
	\appendix
	\section{Additional results and Proofs}

\subsection{Semialgebraic sets}
	\label{sec:semialg}

        Observe that for $A$ and $B$ subsets of $\mathbb{R}^n$ it holds that		
		\begin{equation}
		\label{eq:boundary}
			\begin{aligned}			
			\partial A&=\partial (A^c),\\
			\partial (A\cup B)&\subseteq\partial A\cup \partial B,\\
			\partial (A\cap B)&\subseteq\partial A\cup \partial B.
			\end{aligned}
		\end{equation}
		Especially, the family of sets
		\begin{equation}
		\label{eq:good_sets}
		\left\{ E\in \mathcal{B}(\mathbb{R}^n) \ \Bigg\vert  \begin{aligned}
		r\mapsto  \mathcal{H}^{n-1}\left(\partial E \cap B(0,r) \right)\\
		\text{  is polynomially bounded }
		\end{aligned} \right\}
		\end{equation}
		is stable under complement, finite union and finite intersection. This is a useful observation when we want to verify Assumption \ref{ass:assumption1}\eqref{itm:ass_3}. 
	
	The following Lemma shows that \textit{semialgebraic sets} belong to the family given by \eqref{eq:good_sets}. A semialgebraic set is finite union of finite intersections of sets of the form $(P=0)$ and $(Q > 0)$, where $P$ and $Q$ are polynomials. A multivariate polynomial is of the form (using multi-index notation)
	\[ P(x) = \sum_{\alpha\in A}{a_{\alpha}x^{\alpha}}, \quad a_{\alpha}\in \mathbb{R} \text{ for each }\alpha\in A,   \]
	with $A \subseteq \mathbb{N}^n$ finite.

	\begin{lem}
		\label{lem:semialgebraic}
		%	For each $k\in \mathbb{N}$, there exists a constant $C_{k,n}$, such that 
		%	\[ \mathcal{H}^{n-1}(\partial_*(Q>0)\cap B(0,r) ) \leq C_{k,n} \mathcal{L}^{n-1}(B(0,r)) \] 
		%	for all polynomials $Q$ of order $k$. Especially, 
		%	
		If $E$ is semialgebraic then $r\mapsto  \mathcal{H}^{n-1}\left(\partial E \cap B(0,r) \right)$ is polynomially bounded.
	\end{lem}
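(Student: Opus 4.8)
The plan is to reduce the claim, via the stability properties recorded in \eqref{eq:boundary}--\eqref{eq:good_sets}, to a single estimate on the zero set of one polynomial. Writing the semialgebraic set $E$ as a finite union of finite intersections of atoms of the form $(P=0)$ and $(Q>0)$, stability of the family \eqref{eq:good_sets} under complement, finite union and finite intersection means it suffices to show that each atom belongs to \eqref{eq:good_sets}. Since $(P=0)$ is closed we have $\partial(P=0)\subseteq (P=0)$, and since $(Q>0)$ is open we have $\partial(Q>0)=\overline{(Q>0)}\setminus(Q>0)\subseteq (Q=0)$; in both cases the boundary is contained in the zero set $Z(P):=\{P=0\}$ of a polynomial. (If $P\equiv 0$ the zero set is all of $\mathbb{R}^n$, whose boundary is empty, so we may assume $P\not\equiv 0$.) Hence everything comes down to proving
\[ \mathcal{H}^{n-1}\big(Z(P)\cap B(0,r)\big)\le C\, r^{n-1} \]
for a constant $C$ depending only on $n$ and $d:=\deg P$.

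The key observation is a degree bound on line sections: for any affine line $\ell$ not contained in $Z(P)$, the restriction $P|_\ell$ is a nonzero univariate polynomial of degree at most $d$, so $\#\big(Z(P)\cap\ell\big)\le d$. The lines that \emph{are} contained in $Z(P)$ form a semialgebraic family whose dimension is strictly smaller than $2(n-1)=\dim A(n,1)$, where $A(n,1)$ denotes the space of affine lines: stratify $Z(P)$ into smooth semialgebraic pieces of dimension $\le n-1$; through each point a contained line must be tangent to a stratum, and a standard incidence dimension count then bounds the family of such lines by $2(n-2)$. Consequently this family is a null set for the motion-invariant measure $\mu$ on $A(n,1)$, and $\#\big(Z(P)\cap\ell\big)\le d$ holds for $\mu$-almost every $\ell$.

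I would then apply the Cauchy--Crofton (integral-geometric) formula for $(n-1)$-rectifiable sets (e.g.\ Theorem 3.2.26 in \cite{federer1969geometric}); $Z(P)$ is $(n-1)$-rectifiable because semialgebraic sets admit a finite stratification into smooth submanifolds, so the formula applies as an identity in $[0,\infty]$,
\[ \mathcal{H}^{n-1}\big(Z(P)\cap B(0,r)\big)=c_n\int_{A(n,1)}\#\big(Z(P)\cap B(0,r)\cap\ell\big)\, d\mu(\ell). \]
The integrand is supported on lines meeting $B(0,r)$ and is bounded by $d$ for $\mu$-a.e.\ such $\ell$ by the previous step. Parametrising a line by its direction $u\in S^{n-1}/{\pm}$ and foot point $p\in u^\perp$, a line meets $B(0,r)$ iff $\|p\|\le r$, so $\mu\{\ell:\ell\cap B(0,r)\neq\emptyset\}$ equals a dimensional constant times the volume of an $(n-1)$-ball of radius $r$, hence is of order $r^{n-1}$. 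This yields the bound with $C=c_n'\,d$ and completes the proof, the bounding polynomial having degree $n-1$ in $r$.

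The main obstacle is the careful justification of the integral-geometric step: verifying that $Z(P)$ and the Borel set $Z(P)\cap B(0,r)$ are genuinely $(n-1)$-rectifiable and $\mathcal{H}^{n-1}$-measurable so that Cauchy--Crofton applies, and confirming that the exceptional lines -- those contained in, or merely tangent to, $Z(P)$ -- constitute a $\mu$-null set and therefore do not corrupt the almost-everywhere bound $\#\big(Z(P)\cap\ell\big)\le d$. Note that we do not need to know in advance that $\mathcal{H}^{n-1}$ is locally finite on $Z(P)$: because the formula is an equality in $[0,\infty]$, finiteness of the right-hand side forces finiteness of the left, delivering both the local finiteness of the measure and the desired polynomial growth simultaneously.
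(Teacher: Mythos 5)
Your reduction to the zero set of a single nonzero polynomial, via the stability of the family \eqref{eq:good_sets} under finite set operations and the inclusions $\partial(P=0)\subseteq(P=0)$, $\partial(Q>0)\subseteq(Q=0)$, is exactly the paper's argument, and the bound $\mathcal{H}^{n-1}\bigl((P=0)\cap B(0,r)\bigr)\le C\deg(P)\,r^{n-1}$ you then establish is precisely what the paper obtains by citing Corollary 1 of \cite{loi2014}. Your Cauchy--Crofton sketch is essentially the standard proof of that cited result, so the two arguments coincide in substance; the technical points you flag (rectifiability of the algebraic hypersurface and the $\mu$-nullity of the lines contained in it) are genuine but routine, and correctly resolved by stratifying into finitely many smooth strata of dimension at most $n-1$.
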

	\begin{proof}
		By the stability under finite set operations of the family given by  \eqref{eq:good_sets} it suffices to show that $r\mapsto  \mathcal{H}^{n-1}\left((P=0) \cap B(0,r) \right)$ is polynomially bounded for any nonzero polynomial $P$. But this follows from Corollary 1 in \cite{loi2014}, which implies that 
		\[ \mathcal{H}^{n-1}((P=0) \cap B(0,r)) \leq \frac{\mathrm{deg}(P)\pi^{\frac{n+1}{2}}}{\Gamma\left(\frac{n}{2}\right)}r^{n-1} \]	
		for any nonzero polynomial $P$ with $\mathrm{deg}(P)=\underset{a_{\alpha}\neq 0}{\max}\ |\alpha|$ denoting the degree of $P$.
	\end{proof}

\subsection{Proof of Theorem \ref{theo:singular_repres}}
	\label{sec:proofs}

	The following Lemma characterizes the outer unit normal vectors $\eta_i$ for $i = 1, \ldots, N$. 
	\begin{lem}
		\label{lem:outer_unit_normal}
		Under Assumption \ref{ass:assumption1} the following holds:
		\begin{enumerate}[(a)]
			\item \label{itm:normal_1} $\eta_i=0$ $\mathcal{H}^{n-1}$ a.e. on $\partial U_i \setminus \bigcup_{j\neq i}\overline{U}_j$ for each  $i=1,...,N$.
			\item \label{itm:normal_2} $\eta_i = - \eta_j$ $\mathcal{H}^{n-1}$ a.e. on $\partial U_i \cap \partial U_j$ with $i\neq j$.
			\item \label{itm:normal_3} $\eta_i = 0$ $\mathcal{H}^{n-1}$ a.e. on $\partial U_i \cap \partial U_j \cap \partial U_k$ with $i, j, k$ distinct.
		\end{enumerate}
	\end{lem}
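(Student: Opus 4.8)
My plan is to reduce all three statements to the behaviour of the $U_i$ on their \emph{reduced} boundaries $\partial^* U_i$, where $\eta_i$ is a genuine unit vector (and $\eta_i=0$ elsewhere), and to exploit a single master identity. Since the $U_i$ are disjoint, open, and their closures cover $\mathbb{R}^n$ while each boundary is Lebesgue null by Assumption \ref{ass:assumption1}\eqref{itm:ass_3}, one has $\sum_{i=1}^N 1_{U_i}=1$ Lebesgue a.e. As each $U_i$ has locally finite perimeter, De Giorgi's structure theorem says that $D1_{U_i}$ is the vector measure $-\eta_i\,d\mathcal{H}^{n-1}$ carried by $\partial^* U_i$; differentiating the constant function $\sum_i 1_{U_i}$ therefore yields
\begin{equation}
\sum_{i=1}^N \int_{E\cap\partial^* U_i}\eta_i\,d\mathcal{H}^{n-1}=0\qquad\text{for every bounded Borel set } E.
\label{eq:normalsum}
\end{equation}
I would combine \eqref{eq:normalsum} with the elementary \emph{density bound}: at a point of $\partial^* U_i$ the set $U_i$ has Lebesgue density $\tfrac12$, so disjointness forces at most two reduced boundaries through any single point (three would give total density $\tfrac32>1$).

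Part \eqref{itm:normal_1} is essentially topological. If $x\in\partial U_i\setminus\bigcup_{j\neq i}\overline U_j$ then, as there are only finitely many sets, some ball $B(x,r)$ avoids every $\overline U_j$ with $j\neq i$; since the closures cover $\mathbb{R}^n$ this forces $B(x,r)\subseteq\overline U_i$, so $U_i$ has density $1$ at $x$. Hence $x\notin\partial^* U_i$ and $\eta_i(x)=0$, at every such point.

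For part \eqref{itm:normal_2} I extract the content of \eqref{eq:normalsum} by localisation. Applied to an arbitrary Borel $E\subseteq\partial^* U_i\setminus\bigcup_{l\neq i}\partial^* U_l$, only the $i$-th term survives, so $\int_E\eta_i\,d\mathcal{H}^{n-1}=0$; as this holds for every Borel subset, $\eta_i=0$ $\mathcal{H}^{n-1}$-a.e. there, and since $|\eta_i|=1$ on $\partial^* U_i$ the set is $\mathcal{H}^{n-1}$-null. Thus $\mathcal{H}^{n-1}$-a.e. point of $\partial^* U_i$ lies on exactly one further reduced boundary. Localising \eqref{eq:normalsum} to a Borel subset of $\partial^* U_i\cap\partial^* U_j$—which, by the density bound, meets no third reduced boundary—leaves only the $i$-th and $j$-th terms, whence $\eta_i+\eta_j=0$ $\mathcal{H}^{n-1}$-a.e. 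Part \eqref{itm:normal_3} is then immediate on reduced boundaries: a triple reduced intersection violates the density bound, so $\partial^* U_i\cap\partial^* U_j\cap\partial^* U_k$ is empty and the claim holds vacuously there.

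\textbf{The main obstacle} is matching the reduced boundaries, on which \eqref{eq:normalsum} and the density bound operate, to the topological boundaries $\partial U_i$ appearing in the statement. Concretely, I must rule out—up to $\mathcal{H}^{n-1}$-null sets—points where $U_i$ has density $\tfrac12$ (so $\eta_i\neq0$) while a second region merely \emph{accumulates}, i.e. $x\in\partial U_j$ but $x\notin\partial^* U_j$ with density $0$; at such points the equalities above would break down. This is precisely where the full strength of Assumption \ref{ass:assumption1}\eqref{itm:ass_3} enters: I would show that $\partial U_i$ and $\partial^* U_i$ agree up to an $\mathcal{H}^{n-1}$-null set. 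In every intended application the partition is semialgebraic (Lemma \ref{lem:semialgebraic}), so each $\partial U_i$ is a finite union of hypersurface pieces and strictly lower-dimensional strata, the $(n-1)$-dimensional part being exactly $\partial^* U_i$; the density-$0$ accumulation pathology then cannot occur. Granting this reduction, parts \eqref{itm:normal_2} and \eqref{itm:normal_3} transfer verbatim from $\partial^* U_i$ to $\partial U_i$.
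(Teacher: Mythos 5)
Your strategy is essentially the paper's: both arguments reduce the statement to the reduced boundaries via the chain $\partial^*U_i\subseteq\partial_*U_i\subseteq\partial U_i$, the fact that $\mathcal{H}^{n-1}(\partial_*U_i\setminus\partial^*U_i)=0$, and the convention that $\eta_i=0$ off $\partial_*U_i$, and then exploit that a point of $\partial^*U_i$ is a point of Lebesgue density $\tfrac12$ for $U_i$. Your part \eqref{itm:normal_1} is the paper's claim $\partial^*U_i\subseteq\bigcup_{l\neq i}\overline{U}_l$, obtained from the same density-one observation (you should record that $|B(x,r)\cap U_i|=|B(x,r)|$ uses that $\partial U_i$ is Lebesgue null, which Assumption \ref{ass:assumption1}\eqref{itm:ass_3} supplies). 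For \eqref{itm:normal_2} and \eqref{itm:normal_3} the paper invokes the blow-up theorem, $1_{U_i^r(x)}\to 1_{H_i(x)}$ in $L^1_{\mathrm{loc}}$, and notes that a sum of two or three half-space indicators arising from disjoint sets must again be an indicator a.e.; your combination of the identity $\sum_i D1_{U_i}=0$ with Federer's density bound delivers exactly the same conclusions on $\partial^*U_i\cap\partial^*U_j$ and is equally valid (the local finiteness needed to sum the Gauss--Green measures again comes from Assumption \ref{ass:assumption1}\eqref{itm:ass_3}). This difference is cosmetic rather than structural.

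Concerning your ``main obstacle'': you have correctly located the only delicate point, but your proposed repair is off target. It is not true that $\partial U_i$ and $\partial^*U_i$ agree up to an $\mathcal{H}^{n-1}$-null set under Assumption \ref{ass:assumption1}\eqref{itm:ass_3} (remove a slit from a ball: the slit lies in $\partial U_i\setminus\partial_*U_i$ and carries positive $\mathcal{H}^{n-1}$-measure), and you do not need such agreement. On $\partial U_i\setminus\partial_*U_i$ the normal $\eta_i$ vanishes by definition, so points where neither normal is measure-theoretically defined cause no harm; the only residual set in \eqref{itm:normal_2} and \eqref{itm:normal_3} is the mixed one, e.g. $\partial^*U_i\cap(\partial U_j\setminus\partial_*U_j)$, where $\eta_i\neq 0$ while $\eta_j=0$. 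What is actually required is that this mixed set is $\mathcal{H}^{n-1}$-null, i.e. that $U_j$ cannot accumulate with density zero on a non-null portion of $\partial^*U_i$; semialgebraicity of the partition is one way to secure this in the intended applications. You should be aware that the paper's own proof passes over exactly this point in silence when it asserts that the three reduced-boundary claims imply the lemma, so your hesitation is not a defect relative to the published argument --- but your suggested resolution (full agreement of topological and reduced boundaries) is both stronger than needed and unavailable under the stated hypotheses.
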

	\begin{proof}
%		We first prove the following claim: 
%		\begin{equation}
%		\label{eq:claim}
%		U_i^c = \bigcup_{j\neq i} \overline{U}_j, \text{ for each } i = 1,...,N.
%		\end{equation}
%		The inclusion $\supseteq$ is clear, since $(U_i)_i$ are open and disjoint. As for the other inclusion; if $x\notin \overline{U}_j$ for all $j\neq i$, then (since $\mathbb{R}^n=\bigcup_j\overline{U}_j$) we have $x\in \overline{U}_i$. So 
%		\[ \left( \bigcup_{j\neq i}\overline{U}_j \right)^c = \bigcap_{j\neq i}\overline{U}_j^c\subseteq \overline{U}_i \]
%		the left hand side is open and we conclude
%		\[ \left( \bigcup_{j\neq i}\overline{U}_j \right)^c \subseteq \mathrm{int}(\overline{U}_i) = U_i \]
%		thus proving the other inclusion of \eqref{eq:claim}.
%		
%		Now \eqref{itm:normal_1} follows from \eqref{eq:boundary};
%		\begin{equation}
%		\label{eq:boundary_inclusions}
%			\partial U_i = \partial U_i^c = \partial\left(\bigcup_{j\neq i} \overline{U}_j\right) \subseteq \bigcup_{j\neq i}\partial\overline{U}_j = \bigcup_{j\neq i}\partial U_j,
%		\end{equation}
%		where the last equality is due to
%		\[ \partial \overline{U}_j = \overline{U}_j  \setminus \mathrm{int}(\overline{U}_j ) = \overline{U}_j  \setminus U_j = \partial U_j. \]
%		
		Firstly, note that the unit outer normal $\eta_i$ on $\partial U_i$ vanishes outside \textit{the measure theoretic boundary} $\partial_*U_i$, see Definition 5.8 in \cite{evans1991measure}. % One can verify directly from the definition that \eqref{eq:boundary} also hold for the measure theoretic boundary, hence \eqref{itm:normal_1} also holds for the measure theoretic boundary.
		Moreover, these two types of boundaries relates to \textit{the reduced boundary} $\partial^*U_i$ (see Definition 5.7 in \cite{evans1991measure}) by the inclusions:
		\[ \partial^*U_i\subseteq \partial_*U_i \subseteq \partial U_i. \]
		Furthermore, $\mathcal{H}^{n-1}(\partial_*U_i\setminus\partial^*U_i)=0$ (see Lemma 5.8.1 in \cite{evans1991measure}). All in all, we see that the Lemma holds if we can show the following claims: 
		\begin{equation}
		\label{eq:claim2}
			\begin{aligned}
				\partial^*U_i \subseteq& \bigcup_{l\neq i} \overline{U}_l \\
				\eta_i=-\eta_j \text{ on }& \partial^*U_i \cap \partial^*U_j\\
				\partial^*U_i \cap \partial^*U_j& \cap \partial^*U_k = \emptyset
			\end{aligned}
		\end{equation}
		holds for all $i, j, k$ distinct.
		
		To prove the claims, define for each $i$ and $r>0$ the sets
		\begin{align*}
			U_i^r(x)&=\{ y \mid r(y-x) + x \in U_i \},\\
			H_i(x) 	&=\{ y \mid \langle \eta_i , y - x\rangle \leq 0 \}.
		\end{align*}  
		Note that $\{U_i^r(x)\}_i$ are still disjoint. By Theorem 5.7.1 in \cite{evans1991measure}
		\[ 1_{U_i^r(x)}\xrightarrow{r\rightarrow 0} 1_{H_i(x)} \text{ in } L^1_{\mathrm{loc}}(\mathbb{R}^n) \text{ for all } x\in \partial^* U_i. \]
		
		Therefore, if there existed $x\in \partial^*U_i \cap \partial^*U_j \cap \partial^*U_k$ for $i,j,k$ distinct, then 
		\begin{equation}
		\label{eq:blow_up_boundary}
		1_{U_i^r(x)\cup U_j^r(x) \cup U_k^r(x)} \xrightarrow{r\rightarrow 0} 1_{H_i(x)}+1_{H_j(x)}+1_{H_k(x)} \text{ in } L^1_{\mathrm{loc}}(\mathbb{R}^n),
		\end{equation}
		which is impossible as the right hand side is not Lebesgue a.e. an indicator. By the same argument one can deduce that $\eta_i=-\eta_j$ must hold for $x\in \partial^*U_i \cap \partial^*U_j$ and that any $x\in \partial^*U_i$ cannot belong to the open set $(\bigcup_{l\neq i}\overline{U}_l)^c$. 		
	\end{proof}

	\begin{proof}[Proof of Theorem \ref{theo:singular_repres}]
		For $i = 1,...,N$ Gauss-Green's formula (see Theorem 5.8.1 in \cite{evans1991measure} and Theorem 4.5.6 in \cite{federer1969geometric}) gives that
		\begin{equation}
		\label{eq:GaussGreen}
		\int_{U_i}{\mathrm{div} (f) \ dm}=\int_{\partial U_i}{\langle f,\eta_i\rangle \ d\mathcal{H}^{n-1}}
		\end{equation}
		for all Lipschitz continuous vector fields $f$ with compact support. Here $\eta_i$ denotes the outer unit normal of $\partial U_i$, which is well defined and nonzero on a subset of $\partial U_i$ and zero everywhere else by definition. 
		
		Let $(g_r)_r$ be a sequence of smooth functions with
		\[ g_r(x)=\begin{cases}
		1 & \text{if } x \in B(0,r)\\
		0 & \text{if } x \notin B(0,r+1)
		\end{cases} \]
		and $(g_r)_r$ and $(Dg_r)_r$ uniformly bounded. Since $\hat{\mu}_i$ is Lipschitz continuous on $\overline{U}_i\cap B(0,r+1)$ Kirzbraun's theorem ensures that $\hat{\mu}_i$ has a Lipschitz extension, $\hat{\mu}_i^r:\mathbb{R}^n\rightarrow\mathbb{R}^n$. Then $f_r=g_r\psi\hat{\mu}_i^r$ is Lipschitz continuous with compact support and $g_r \hat{\mu}^r_i = g_r \hat{\mu}$ on $U_i$. Then \eqref{eq:GaussGreen} applied to $f_r$ yields
		\[ \int_{\partial  U_i}{g_r\psi \langle\hat{\mu}_i, \eta_i\rangle \ d\mathcal{H}^{n-1}} = \int_{U_i}{g_r\psi \mathrm{div} (\hat{\mu}_i)  \ dm}+ \int_{U_i}{\left\langle g_rD\psi + \psi Dg_r, \hat{\mu}_i \right\rangle  dm}. \]
		Due to Assumption \ref{ass:assumption1} all integrands above are dominated by integrable functions, and by letting $r \to \infty$ Lebesgue's Dominated Convergence Theorem yields
		\[ \int_{\partial U_i}{\psi \langle\hat{\mu}_i, \eta_i\rangle \ d\mathcal{H}^{n-1}} = \int_{U_i}{\psi \mathrm{div} (\hat{\mu}_i) \ dm}+ \int_{U_i}{\langle D\psi , \hat{\mu}_i\rangle \ dm}. \]
		By summing over $i$ we get
		\begin{equation}
		\label{eq:GaussGreen_intermediate} 
		\mathrm{df}(\hat{\mu})=\mathrm{df}_{\mathrm{S}}(\hat{\mu}) - \sum_{i}{  \int_{ \partial U_i}{\psi \langle\hat{\mu}_i, \eta_i\rangle d\mathcal{H}^{n-1} } }. 
		\end{equation}
		By Lemma \ref{lem:outer_unit_normal} we see that
		\begin{align*}
		\label{eq:GaussGreen_intermediate2} 
		\mathrm{df}(\hat{\mu})&=\mathrm{df}_{\mathrm{S}}(\hat{\mu}) - \sum_{j\neq i}{  \int_{ \partial U_i \cap \partial U_j}{\psi \langle\hat{\mu}_i, \eta_i\rangle d\mathcal{H}^{n-1} } } \\
		&= \mathrm{df}_{\mathrm{S}}(\hat{\mu}) + \frac{1}{2} \sum_{j\neq i}{  \int_{ \partial U_i \cap \partial U_j}{  \langle\hat{\mu}_j - \hat{\mu}_i,\eta_i\rangle\psi  d\mathcal{H}^{n-1} } }. 
		\end{align*}
		Since $\eta_i$ vanishes on $\partial  U_i \cap \partial  U_j \setminus (\overline{U}_i \cap \overline{U}_j)$ for $i\neq j$ we have proven \eqref{eq:df_struct}. 
	\end{proof}

	\bibliography{bibliography}{}
	\bibliographystyle{agsm}

\end{document}

% --- supplement: supplementary.tex ---

\onehalfspacing
	\pagenumbering{gobble}
	
\title[Suppl. Mat. DF for piecewise Lipschitz estimators]{Supplementary Material: Degrees of freedom for piecewise Lipschitz estimators}
\author[F. R. Mikkelsen]{Frederik Riis Mikkelsen}
\address{Department of Mathematical Sciences,
University of Copenhagen,
Universitetsparken 5,
2100 Copenhagen \O,
Denmark}

\email[Corresponding author]{frm@math.ku.dk}
\author[N. R. Hansen]{Niels Richard Hansen} 
\email{Niels.R.Hansen@math.ku.dk}

\maketitle

\noindent

%\section{Tables used in heatmaps}
%
%The tables below provide the exact numbers used in Figures 4 and 5, i.e., the differences of the average risk estimates for the estimators  $\hat{\xi}_{\mathrm{d.lasso}}^{\hat{\lambda}_{\mathrm{df}_{\mathrm{S}}}}$ and $\hat{\xi}_{\mathrm{d.lasso}}^{\hat{\lambda}_{\mathrm{df}}}$ from the simulation study, scaled down by $n\sigma^2$.
%
%
%% latex table generated in R 3.2.3 by xtable 1.8-2 package
%% Mon Jun  6 11:59:45 2016
%\begin{table}[ht]
%\centering
%\begin{tabular}{lll|rrrrr@{\hskip .3in}rrrrr}
%		&&& \multicolumn{10}{c}{Orthogonal Design} \\
%		&&& \multicolumn{5}{c}{$\sigma$ known} & \multicolumn{5}{c}{$\sigma$ estimated} \\
%$\gamma$ & n & $\alpha$ & 0.1 & 0.2 & 0.5 & 1 & 2 & 0.1 & 0.2 & 0.5 & 1 & 2 \\ 
%  \hline
%\multirow{10}{*}{1} & \multirow{5}{*}{100} & 0.5 & \textcolor{blue}{0.00} & \textcolor{blue}{0.00} & \textcolor{blue}{0.05} & \textcolor{blue}{0.19} & \textcolor{blue}{0.50} & \textcolor{black}{-22.69} & \textcolor{black}{-4.93} & \textcolor{red}{-0.34} & \textcolor{blue}{0.23} & \textcolor{blue}{0.29} \\ 
%   &  & 0.3 & \textcolor{blue}{0.00} & \textcolor{blue}{0.00} & \textcolor{red}{-0.07} & \textcolor{blue}{0.38} & \textcolor{blue}{0.52} & \textcolor{blue}{0.00} & \textcolor{red}{-0.03} & \textcolor{red}{-0.15} & \textcolor{blue}{0.22} & \textcolor{blue}{0.26} \\ 
%   &  & 0.1 & \textcolor{blue}{0.07} & \textcolor{blue}{0.36} & \textcolor{blue}{0.24} & \textcolor{blue}{0.51} & \textcolor{blue}{0.53} & \textcolor{blue}{0.14} & \textcolor{blue}{0.20} & \textcolor{blue}{0.09} & \textcolor{blue}{0.23} & \textcolor{blue}{0.25} \\ 
%   &  & 0.05 & \textcolor{blue}{0.49} & \textcolor{blue}{0.44} & \textcolor{blue}{0.38} & \textcolor{blue}{0.52} & \textcolor{blue}{0.54} & \textcolor{blue}{0.31} & \textcolor{blue}{0.24} & \textcolor{blue}{0.16} & \textcolor{blue}{0.23} & \textcolor{blue}{0.24} \\ 
%   &  & 0 & \textcolor{blue}{0.54} & \textcolor{blue}{0.54} & \textcolor{blue}{0.55} & \textcolor{blue}{0.54} & \textcolor{blue}{0.55} & \textcolor{blue}{0.25} & \textcolor{blue}{0.25} & \textcolor{blue}{0.23} & \textcolor{blue}{0.24} & \textcolor{blue}{0.24} \\ \rule{0pt}{4ex}
%   & \multirow{5}{*}{200} & 0.5 & \textcolor{blue}{0.00} & \textcolor{blue}{0.00} & \textcolor{blue}{0.05} & \textcolor{blue}{0.24} & \textcolor{blue}{0.52} & \textcolor{black}{-22.78} & \textcolor{black}{-5.03} & \textcolor{red}{-0.36} & \textcolor{blue}{0.27} & \textcolor{blue}{0.37} \\ 
%   &  & 0.3 & \textcolor{blue}{0.00} & \textcolor{blue}{0.00} & \textcolor{red}{-0.07} & \textcolor{blue}{0.41} & \textcolor{blue}{0.53} & \textcolor{blue}{0.00} & \textcolor{red}{-0.03} & \textcolor{red}{-0.16} & \textcolor{blue}{0.29} & \textcolor{blue}{0.35} \\ 
%   &  & 0.1 & \textcolor{blue}{0.07} & \textcolor{blue}{0.39} & \textcolor{blue}{0.26} & \textcolor{blue}{0.51} & \textcolor{blue}{0.55} & \textcolor{blue}{0.18} & \textcolor{blue}{0.24} & \textcolor{blue}{0.11} & \textcolor{blue}{0.29} & \textcolor{blue}{0.30} \\ 
%   &  & 0.05 & \textcolor{blue}{0.51} & \textcolor{blue}{0.45} & \textcolor{blue}{0.39} & \textcolor{blue}{0.53} & \textcolor{blue}{0.55} & \textcolor{blue}{0.37} & \textcolor{blue}{0.27} & \textcolor{blue}{0.20} & \textcolor{blue}{0.30} & \textcolor{blue}{0.31} \\ 
%   &  & 0 & \textcolor{blue}{0.56} & \textcolor{blue}{0.55} & \textcolor{blue}{0.55} & \textcolor{blue}{0.56} & \textcolor{blue}{0.55} & \textcolor{blue}{0.32} & \textcolor{blue}{0.32} & \textcolor{blue}{0.32} & \textcolor{blue}{0.28} & \textcolor{blue}{0.32} \\ \rule{0pt}{4ex}
%  \multirow{10}{*}{0.9} & \multirow{5}{*}{100} & 0.5 & \textcolor{red}{-0.02} & \textcolor{blue}{0.05} & \textcolor{blue}{0.41} & \textcolor{blue}{0.52} & \textcolor{blue}{0.55} & \textcolor{red}{-0.04} & \textcolor{blue}{0.02} & \textcolor{blue}{0.20} & \textcolor{blue}{0.25} & \textcolor{blue}{0.25} \\ 
%   &  & 0.3 & \textcolor{red}{-0.02} & \textcolor{blue}{0.07} & \textcolor{blue}{0.41} & \textcolor{blue}{0.53} & \textcolor{blue}{0.55} & \textcolor{red}{-0.04} & \textcolor{blue}{0.03} & \textcolor{blue}{0.20} & \textcolor{blue}{0.24} & \textcolor{blue}{0.24} \\ 
%   &  & 0.1 & \textcolor{blue}{0.31} & \textcolor{blue}{0.21} & \textcolor{blue}{0.41} & \textcolor{blue}{0.53} & \textcolor{blue}{0.54} & \textcolor{blue}{0.20} & \textcolor{blue}{0.07} & \textcolor{blue}{0.17} & \textcolor{blue}{0.23} & \textcolor{blue}{0.25} \\ 
%   &  & 0.05 & \textcolor{blue}{0.48} & \textcolor{blue}{0.38} & \textcolor{blue}{0.44} & \textcolor{blue}{0.53} & \textcolor{blue}{0.54} & \textcolor{blue}{0.30} & \textcolor{blue}{0.15} & \textcolor{blue}{0.17} & \textcolor{blue}{0.24} & \textcolor{blue}{0.25} \\ 
%   &  & 0 & \textcolor{blue}{0.54} & \textcolor{blue}{0.54} & \textcolor{blue}{0.54} & \textcolor{blue}{0.54} & \textcolor{blue}{0.55} & \textcolor{blue}{0.25} & \textcolor{blue}{0.25} & \textcolor{blue}{0.25} & \textcolor{blue}{0.23} & \textcolor{blue}{0.25} \\ \rule{0pt}{4ex}
%   & \multirow{5}{*}{200} & 0.5 & \textcolor{blue}{0.28} & \textcolor{blue}{0.33} & \textcolor{blue}{0.49} & \textcolor{blue}{0.55} & \textcolor{blue}{0.56} & \textcolor{blue}{0.15} & \textcolor{blue}{0.17} & \textcolor{blue}{0.26} & \textcolor{blue}{0.31} & \textcolor{blue}{0.29} \\ 
%   &  & 0.3 & \textcolor{blue}{0.29} & \textcolor{blue}{0.33} & \textcolor{blue}{0.49} & \textcolor{blue}{0.55} & \textcolor{blue}{0.55} & \textcolor{blue}{0.16} & \textcolor{blue}{0.17} & \textcolor{blue}{0.28} & \textcolor{blue}{0.30} & \textcolor{blue}{0.30} \\ 
%   &  & 0.1 & \textcolor{blue}{0.32} & \textcolor{blue}{0.33} & \textcolor{blue}{0.49} & \textcolor{blue}{0.54} & \textcolor{blue}{0.55} & \textcolor{blue}{0.17} & \textcolor{blue}{0.16} & \textcolor{blue}{0.26} & \textcolor{blue}{0.30} & \textcolor{blue}{0.30} \\ 
%   &  & 0.05 & \textcolor{blue}{0.47} & \textcolor{blue}{0.37} & \textcolor{blue}{0.49} & \textcolor{blue}{0.54} & \textcolor{blue}{0.55} & \textcolor{blue}{0.30} & \textcolor{blue}{0.19} & \textcolor{blue}{0.26} & \textcolor{blue}{0.31} & \textcolor{blue}{0.30} \\ 
%   &  & 0 & \textcolor{blue}{0.55} & \textcolor{blue}{0.55} & \textcolor{blue}{0.56} & \textcolor{blue}{0.56} & \textcolor{blue}{0.55} & \textcolor{blue}{0.30} & \textcolor{blue}{0.32} & \textcolor{blue}{0.31} & \textcolor{blue}{0.31} & \textcolor{blue}{0.31} \\ 
%   \hline
%\end{tabular}
%\end{table}
%
%
%% latex table generated in R 3.2.3 by xtable 1.8-2 package
%% Mon Jun  6 12:02:50 2016
%\begin{table}[ht]
%\centering
%\begin{tabular}{lll|rrrrr@{\hskip .3in}rrrrr}
%		&&& \multicolumn{10}{c}{Empirical Design} \\
%		&&& \multicolumn{5}{c}{$\sigma$ known} & \multicolumn{5}{c}{$\sigma$ estimated} \\
%$\gamma$ & n & $\alpha$ & 0.1 & 0.2 & 0.5 & 1 & 2 & 0.1 & 0.2 & 0.5 & 1 & 2 \\ 
%  \hline
%\multirow{10}{*}{1} & \multirow{5}{*}{100} & 0.5 & \textcolor{red}{-0.01} & \textcolor{blue}{0.00} & \textcolor{red}{-0.03} & \textcolor{blue}{0.03} & \textcolor{blue}{0.13} & \textcolor{blue}{0.08} & \textcolor{blue}{0.01} & \textcolor{red}{-0.03} & \textcolor{blue}{0.02} & \textcolor{blue}{0.10} \\ 
%   &  & 0.3 & \textcolor{red}{-0.01} & \textcolor{red}{-0.02} & \textcolor{red}{-0.01} & \textcolor{blue}{0.09} & \textcolor{blue}{0.17} & \textcolor{red}{-0.01} & \textcolor{red}{-0.02} & \textcolor{red}{-0.02} & \textcolor{blue}{0.05} & \textcolor{blue}{0.15} \\ 
%   &  & 0.1 & \textcolor{blue}{0.12} & \textcolor{blue}{0.04} & \textcolor{blue}{0.07} & \textcolor{blue}{0.18} & \textcolor{blue}{0.25} & \textcolor{blue}{0.11} & \textcolor{blue}{0.03} & \textcolor{blue}{0.05} & \textcolor{blue}{0.15} & \textcolor{blue}{0.22} \\ 
%   &  & 0.05 & \textcolor{blue}{0.21} & \textcolor{blue}{0.13} & \textcolor{blue}{0.14} & \textcolor{blue}{0.22} & \textcolor{blue}{0.26} & \textcolor{blue}{0.18} & \textcolor{blue}{0.10} & \textcolor{blue}{0.11} & \textcolor{blue}{0.19} & \textcolor{blue}{0.23} \\ 
%   &  & 0 & \textcolor{blue}{0.28} & \textcolor{blue}{0.28} & \textcolor{blue}{0.28} & \textcolor{blue}{0.27} & \textcolor{blue}{0.28} & \textcolor{blue}{0.25} & \textcolor{blue}{0.24} & \textcolor{blue}{0.25} & \textcolor{blue}{0.25} & \textcolor{blue}{0.25} \\  \rule{0pt}{4ex}
%   & \multirow{5}{*}{200} & 0.5 & \textcolor{red}{-0.01} & \textcolor{blue}{0.00} & \textcolor{red}{-0.02} & \textcolor{blue}{0.06} & \textcolor{blue}{0.14} & \textcolor{blue}{0.03} & \textcolor{blue}{0.00} & \textcolor{red}{-0.02} & \textcolor{blue}{0.04} & \textcolor{blue}{0.12} \\ 
%   &  & 0.3 & \textcolor{red}{-0.00} & \textcolor{red}{-0.01} & \textcolor{blue}{0.00} & \textcolor{blue}{0.10} & \textcolor{blue}{0.17} & \textcolor{red}{-0.00} & \textcolor{red}{-0.01} & \textcolor{red}{-0.01} & \textcolor{blue}{0.08} & \textcolor{blue}{0.16} \\ 
%   &  & 0.1 & \textcolor{blue}{0.11} & \textcolor{blue}{0.04} & \textcolor{blue}{0.09} & \textcolor{blue}{0.17} & \textcolor{blue}{0.23} & \textcolor{blue}{0.11} & \textcolor{blue}{0.04} & \textcolor{blue}{0.07} & \textcolor{blue}{0.16} & \textcolor{blue}{0.23} \\ 
%   &  & 0.05 & \textcolor{blue}{0.20} & \textcolor{blue}{0.11} & \textcolor{blue}{0.14} & \textcolor{blue}{0.22} & \textcolor{blue}{0.26} & \textcolor{blue}{0.19} & \textcolor{blue}{0.11} & \textcolor{blue}{0.13} & \textcolor{blue}{0.21} & \textcolor{blue}{0.25} \\ 
%   &  & 0 & \textcolor{blue}{0.27} & \textcolor{blue}{0.27} & \textcolor{blue}{0.27} & \textcolor{blue}{0.27} & \textcolor{blue}{0.27} & \textcolor{blue}{0.26} & \textcolor{blue}{0.26} & \textcolor{blue}{0.26} & \textcolor{blue}{0.26} & \textcolor{blue}{0.26} \\  \rule{0pt}{4ex}
%  \multirow{10}{*}{0.9} & \multirow{5}{*}{100} & 0.5 & \textcolor{blue}{0.00} & \textcolor{blue}{0.02} & \textcolor{blue}{0.12} & \textcolor{blue}{0.21} & \textcolor{blue}{0.25} & \textcolor{red}{-0.01} & \textcolor{blue}{0.00} & \textcolor{blue}{0.09} & \textcolor{blue}{0.18} & \textcolor{blue}{0.23} \\ 
%   &  & 0.3 & \textcolor{blue}{0.01} & \textcolor{blue}{0.02} & \textcolor{blue}{0.12} & \textcolor{blue}{0.21} & \textcolor{blue}{0.26} & \textcolor{red}{-0.01} & \textcolor{blue}{0.00} & \textcolor{blue}{0.09} & \textcolor{blue}{0.18} & \textcolor{blue}{0.23} \\ 
%   &  & 0.1 & \textcolor{blue}{0.06} & \textcolor{blue}{0.05} & \textcolor{blue}{0.14} & \textcolor{blue}{0.23} & \textcolor{blue}{0.26} & \textcolor{blue}{0.05} & \textcolor{blue}{0.03} & \textcolor{blue}{0.11} & \textcolor{blue}{0.19} & \textcolor{blue}{0.23} \\ 
%   &  & 0.05 & \textcolor{blue}{0.19} & \textcolor{blue}{0.11} & \textcolor{blue}{0.16} & \textcolor{blue}{0.24} & \textcolor{blue}{0.27} & \textcolor{blue}{0.17} & \textcolor{blue}{0.08} & \textcolor{blue}{0.13} & \textcolor{blue}{0.21} & \textcolor{blue}{0.24} \\ 
%   &  & 0 & \textcolor{blue}{0.27} & \textcolor{blue}{0.28} & \textcolor{blue}{0.28} & \textcolor{blue}{0.27} & \textcolor{blue}{0.27} & \textcolor{blue}{0.24} & \textcolor{blue}{0.25} & \textcolor{blue}{0.25} & \textcolor{blue}{0.24} & \textcolor{blue}{0.25} \\  \rule{0pt}{4ex}
%   & \multirow{5}{*}{200} & 0.5 & \textcolor{blue}{0.07} & \textcolor{blue}{0.09} & \textcolor{blue}{0.17} & \textcolor{blue}{0.24} & \textcolor{blue}{0.26} & \textcolor{blue}{0.06} & \textcolor{blue}{0.08} & \textcolor{blue}{0.16} & \textcolor{blue}{0.23} & \textcolor{blue}{0.25} \\ 
%   &  & 0.3 & \textcolor{blue}{0.07} & \textcolor{blue}{0.09} & \textcolor{blue}{0.18} & \textcolor{blue}{0.23} & \textcolor{blue}{0.26} & \textcolor{blue}{0.06} & \textcolor{blue}{0.08} & \textcolor{blue}{0.16} & \textcolor{blue}{0.23} & \textcolor{blue}{0.25} \\ 
%   &  & 0.1 & \textcolor{blue}{0.07} & \textcolor{blue}{0.09} & \textcolor{blue}{0.17} & \textcolor{blue}{0.24} & \textcolor{blue}{0.26} & \textcolor{blue}{0.06} & \textcolor{blue}{0.08} & \textcolor{blue}{0.16} & \textcolor{blue}{0.23} & \textcolor{blue}{0.25} \\ 
%   &  & 0.05 & \textcolor{blue}{0.13} & \textcolor{blue}{0.12} & \textcolor{blue}{0.19} & \textcolor{blue}{0.25} & \textcolor{blue}{0.26} & \textcolor{blue}{0.13} & \textcolor{blue}{0.10} & \textcolor{blue}{0.17} & \textcolor{blue}{0.23} & \textcolor{blue}{0.26} \\ 
%   &  & 0 & \textcolor{blue}{0.27} & \textcolor{blue}{0.27} & \textcolor{blue}{0.27} & \textcolor{blue}{0.27} & \textcolor{blue}{0.27} & \textcolor{blue}{0.27} & \textcolor{blue}{0.27} & \textcolor{blue}{0.26} & \textcolor{blue}{0.26} & \textcolor{blue}{0.27} \\ 
%   \hline
%\end{tabular}
%\end{table}
%
%
%
%
%% latex table generated in R 3.2.3 by xtable 1.8-2 package
%% Mon Jun  6 12:04:09 2016
%\begin{table}[ht]
%\centering
%\begin{tabular}{lll|rrrrr@{\hskip .3in}rrrrr}
%		&&& \multicolumn{10}{c}{Constant Correlation Design: $\rho = 0.1$} \\
%		&&& \multicolumn{5}{c}{$\sigma$ known} & \multicolumn{5}{c}{$\sigma$ estimated} \\
%$\gamma$ & n & $\alpha$ & 0.1 & 0.2 & 0.5 & 1 & 2 & 0.1 & 0.2 & 0.5 & 1 & 2 \\ 
%  \hline
%\multirow{10}{*}{1} & \multirow{5}{*}{100} & 0.5 & \textcolor{red}{-0.00} & \textcolor{blue}{0.01} & \textcolor{blue}{0.04} & \textcolor{blue}{0.00} & \textcolor{red}{-0.01} & \textcolor{blue}{0.71} & \textcolor{blue}{0.17} & \textcolor{blue}{0.03} & \textcolor{blue}{0.01} & \textcolor{blue}{0.01} \\ 
%   &  & 0.3 & \textcolor{blue}{0.02} & \textcolor{blue}{0.02} & \textcolor{blue}{0.02} & \textcolor{red}{-0.01} & \textcolor{blue}{0.06} & \textcolor{blue}{0.05} & \textcolor{blue}{0.03} & \textcolor{blue}{0.02} & \textcolor{blue}{0.00} & \textcolor{blue}{0.07} \\ 
%   &  & 0.1 & \textcolor{blue}{0.18} & \textcolor{blue}{0.15} & \textcolor{blue}{0.08} & \textcolor{blue}{0.15} & \textcolor{blue}{0.28} & \textcolor{blue}{0.15} & \textcolor{blue}{0.12} & \textcolor{blue}{0.06} & \textcolor{blue}{0.12} & \textcolor{blue}{0.24} \\ 
%   &  & 0.05 & \textcolor{blue}{0.28} & \textcolor{blue}{0.24} & \textcolor{blue}{0.16} & \textcolor{blue}{0.28} & \textcolor{blue}{0.33} & \textcolor{blue}{0.23} & \textcolor{blue}{0.20} & \textcolor{blue}{0.13} & \textcolor{blue}{0.23} & \textcolor{blue}{0.28} \\ 
%   &  & 0 & \textcolor{blue}{0.34} & \textcolor{blue}{0.35} & \textcolor{blue}{0.35} & \textcolor{blue}{0.35} & \textcolor{blue}{0.34} & \textcolor{blue}{0.31} & \textcolor{blue}{0.30} & \textcolor{blue}{0.32} & \textcolor{blue}{0.31} & \textcolor{blue}{0.31} \\ \rule{0pt}{4ex}
%   & \multirow{5}{*}{200} & 0.5 & \textcolor{red}{-0.00} & \textcolor{blue}{0.01} & \textcolor{blue}{0.06} & \textcolor{blue}{0.05} & \textcolor{red}{-0.01} & \textcolor{blue}{0.68} & \textcolor{blue}{0.17} & \textcolor{blue}{0.04} & \textcolor{blue}{0.05} & \textcolor{blue}{0.02} \\ 
%   &  & 0.3 & \textcolor{blue}{0.04} & \textcolor{blue}{0.07} & \textcolor{blue}{0.06} & \textcolor{blue}{0.01} & \textcolor{blue}{0.03} & \textcolor{blue}{0.06} & \textcolor{blue}{0.07} & \textcolor{blue}{0.06} & \textcolor{blue}{0.03} & \textcolor{blue}{0.07} \\ 
%   &  & 0.1 & \textcolor{blue}{0.20} & \textcolor{blue}{0.18} & \textcolor{blue}{0.09} & \textcolor{blue}{0.14} & \textcolor{blue}{0.25} & \textcolor{blue}{0.20} & \textcolor{blue}{0.17} & \textcolor{blue}{0.10} & \textcolor{blue}{0.15} & \textcolor{blue}{0.25} \\ 
%   &  & 0.05 & \textcolor{blue}{0.31} & \textcolor{blue}{0.26} & \textcolor{blue}{0.19} & \textcolor{blue}{0.24} & \textcolor{blue}{0.32} & \textcolor{blue}{0.29} & \textcolor{blue}{0.25} & \textcolor{blue}{0.19} & \textcolor{blue}{0.24} & \textcolor{blue}{0.31} \\ 
%   &  & 0 & \textcolor{blue}{0.35} & \textcolor{blue}{0.35} & \textcolor{blue}{0.36} & \textcolor{blue}{0.35} & \textcolor{blue}{0.36} & \textcolor{blue}{0.34} & \textcolor{blue}{0.34} & \textcolor{blue}{0.34} & \textcolor{blue}{0.35} & \textcolor{blue}{0.34} \\ \rule{0pt}{4ex}
%  \multirow{10}{*}{0.9} & \multirow{5}{*}{100} & 0.5 & \textcolor{blue}{0.02} & \textcolor{blue}{0.02} & \textcolor{blue}{0.04} & \textcolor{blue}{0.15} & \textcolor{blue}{0.27} & \textcolor{blue}{0.02} & \textcolor{blue}{0.03} & \textcolor{blue}{0.04} & \textcolor{blue}{0.13} & \textcolor{blue}{0.25} \\ 
%   &  & 0.3 & \textcolor{blue}{0.03} & \textcolor{blue}{0.02} & \textcolor{blue}{0.05} & \textcolor{blue}{0.17} & \textcolor{blue}{0.29} & \textcolor{blue}{0.03} & \textcolor{blue}{0.03} & \textcolor{blue}{0.05} & \textcolor{blue}{0.15} & \textcolor{blue}{0.25} \\ 
%   &  & 0.1 & \textcolor{blue}{0.15} & \textcolor{blue}{0.10} & \textcolor{blue}{0.11} & \textcolor{blue}{0.24} & \textcolor{blue}{0.32} & \textcolor{blue}{0.13} & \textcolor{blue}{0.08} & \textcolor{blue}{0.09} & \textcolor{blue}{0.20} & \textcolor{blue}{0.28} \\ 
%   &  & 0.05 & \textcolor{blue}{0.28} & \textcolor{blue}{0.21} & \textcolor{blue}{0.18} & \textcolor{blue}{0.30} & \textcolor{blue}{0.34} & \textcolor{blue}{0.23} & \textcolor{blue}{0.16} & \textcolor{blue}{0.14} & \textcolor{blue}{0.25} & \textcolor{blue}{0.29} \\ 
%   &  & 0 & \textcolor{blue}{0.34} & \textcolor{blue}{0.35} & \textcolor{blue}{0.34} & \textcolor{blue}{0.35} & \textcolor{blue}{0.34} & \textcolor{blue}{0.30} & \textcolor{blue}{0.31} & \textcolor{blue}{0.31} & \textcolor{blue}{0.32} & \textcolor{blue}{0.30} \\ \rule{0pt}{4ex}
%   & \multirow{5}{*}{200} & 0.5 & \textcolor{blue}{0.08} & \textcolor{blue}{0.09} & \textcolor{blue}{0.15} & \textcolor{blue}{0.25} & \textcolor{blue}{0.33} & \textcolor{blue}{0.10} & \textcolor{blue}{0.11} & \textcolor{blue}{0.17} & \textcolor{blue}{0.25} & \textcolor{blue}{0.32} \\ 
%   &  & 0.3 & \textcolor{blue}{0.08} & \textcolor{blue}{0.08} & \textcolor{blue}{0.14} & \textcolor{blue}{0.25} & \textcolor{blue}{0.32} & \textcolor{blue}{0.10} & \textcolor{blue}{0.10} & \textcolor{blue}{0.15} & \textcolor{blue}{0.25} & \textcolor{blue}{0.31} \\ 
%   &  & 0.1 & \textcolor{blue}{0.15} & \textcolor{blue}{0.13} & \textcolor{blue}{0.17} & \textcolor{blue}{0.27} & \textcolor{blue}{0.33} & \textcolor{blue}{0.14} & \textcolor{blue}{0.13} & \textcolor{blue}{0.18} & \textcolor{blue}{0.26} & \textcolor{blue}{0.32} \\ 
%   &  & 0.05 & \textcolor{blue}{0.28} & \textcolor{blue}{0.22} & \textcolor{blue}{0.22} & \textcolor{blue}{0.30} & \textcolor{blue}{0.34} & \textcolor{blue}{0.26} & \textcolor{blue}{0.21} & \textcolor{blue}{0.21} & \textcolor{blue}{0.29} & \textcolor{blue}{0.33} \\ 
%   &  & 0 & \textcolor{blue}{0.35} & \textcolor{blue}{0.36} & \textcolor{blue}{0.36} & \textcolor{blue}{0.35} & \textcolor{blue}{0.36} & \textcolor{blue}{0.34} & \textcolor{blue}{0.34} & \textcolor{blue}{0.35} & \textcolor{blue}{0.34} & \textcolor{blue}{0.34} \\ 
%   \hline
%\end{tabular}
%\end{table}
%
%
%
%% latex table generated in R 3.2.3 by xtable 1.8-2 package
%% Mon Jun  6 12:05:28 2016
%\begin{table}[ht]
%\centering
%\begin{tabular}{lll|rrrrr@{\hskip .3in}rrrrr}
%		&&& \multicolumn{10}{c}{Correlated Design: $\rho = 0.4$} \\
%		&&& \multicolumn{5}{c}{$\sigma$ known} & \multicolumn{5}{c}{$\sigma$ estimated} \\
%$\gamma$ & n & $\alpha$ & 0.1 & 0.2 & 0.5 & 1 & 2 & 0.1 & 0.2 & 0.5 & 1 & 2 \\ 
%  \hline
%\multirow{10}{*}{1} & \multirow{5}{*}{100} & 0.5 & \textcolor{blue}{0.00} & \textcolor{blue}{0.03} & \textcolor{blue}{0.07} & \textcolor{blue}{0.06} & \textcolor{blue}{0.02} & \textcolor{black}{2.93} & \textcolor{blue}{0.68} & \textcolor{blue}{0.10} & \textcolor{blue}{0.07} & \textcolor{blue}{0.06} \\ 
%   &  & 0.3 & \textcolor{red}{-0.00} & \textcolor{blue}{0.05} & \textcolor{blue}{0.09} & \textcolor{blue}{0.04} & \textcolor{blue}{0.07} & \textcolor{blue}{0.56} & \textcolor{blue}{0.20} & \textcolor{blue}{0.07} & \textcolor{blue}{0.07} & \textcolor{blue}{0.11} \\ 
%   &  & 0.1 & \textcolor{blue}{0.11} & \textcolor{blue}{0.17} & \textcolor{blue}{0.12} & \textcolor{blue}{0.18} & \textcolor{blue}{0.25} & \textcolor{blue}{0.09} & \textcolor{blue}{0.13} & \textcolor{blue}{0.11} & \textcolor{blue}{0.18} & \textcolor{blue}{0.24} \\ 
%   &  & 0.05 & \textcolor{blue}{0.25} & \textcolor{blue}{0.23} & \textcolor{blue}{0.21} & \textcolor{blue}{0.25} & \textcolor{blue}{0.31} & \textcolor{blue}{0.23} & \textcolor{blue}{0.19} & \textcolor{blue}{0.18} & \textcolor{blue}{0.24} & \textcolor{blue}{0.27} \\ 
%   &  & 0 & \textcolor{blue}{0.35} & \textcolor{blue}{0.34} & \textcolor{blue}{0.35} & \textcolor{blue}{0.35} & \textcolor{blue}{0.35} & \textcolor{blue}{0.31} & \textcolor{blue}{0.30} & \textcolor{blue}{0.32} & \textcolor{blue}{0.31} & \textcolor{blue}{0.31} \\ \rule{0pt}{4ex}
%   & \multirow{5}{*}{200} & 0.5 & \textcolor{blue}{0.00} & \textcolor{blue}{0.02} & \textcolor{blue}{0.10} & \textcolor{blue}{0.09} & \textcolor{blue}{0.04} & \textcolor{black}{3.03} & \textcolor{blue}{0.71} & \textcolor{blue}{0.12} & \textcolor{blue}{0.09} & \textcolor{blue}{0.10} \\ 
%   &  & 0.3 & \textcolor{red}{-0.00} & \textcolor{blue}{0.03} & \textcolor{blue}{0.10} & \textcolor{blue}{0.07} & \textcolor{blue}{0.08} & \textcolor{blue}{0.73} & \textcolor{blue}{0.17} & \textcolor{blue}{0.11} & \textcolor{blue}{0.12} & \textcolor{blue}{0.14} \\ 
%   &  & 0.1 & \textcolor{blue}{0.12} & \textcolor{blue}{0.23} & \textcolor{blue}{0.16} & \textcolor{blue}{0.19} & \textcolor{blue}{0.29} & \textcolor{blue}{0.11} & \textcolor{blue}{0.22} & \textcolor{blue}{0.17} & \textcolor{blue}{0.23} & \textcolor{blue}{0.31} \\ 
%   &  & 0.05 & \textcolor{blue}{0.29} & \textcolor{blue}{0.27} & \textcolor{blue}{0.24} & \textcolor{blue}{0.29} & \textcolor{blue}{0.32} & \textcolor{blue}{0.28} & \textcolor{blue}{0.27} & \textcolor{blue}{0.25} & \textcolor{blue}{0.30} & \textcolor{blue}{0.32} \\ 
%   &  & 0 & \textcolor{blue}{0.35} & \textcolor{blue}{0.35} & \textcolor{blue}{0.35} & \textcolor{blue}{0.35} & \textcolor{blue}{0.35} & \textcolor{blue}{0.34} & \textcolor{blue}{0.35} & \textcolor{blue}{0.34} & \textcolor{blue}{0.34} & \textcolor{blue}{0.34} \\ \rule{0pt}{4ex}
%  \multirow{10}{*}{0.9} & \multirow{5}{*}{100} & 0.5 & \textcolor{blue}{0.05} & \textcolor{blue}{0.08} & \textcolor{blue}{0.07} & \textcolor{blue}{0.15} & \textcolor{blue}{0.26} & \textcolor{blue}{0.07} & \textcolor{blue}{0.08} & \textcolor{blue}{0.09} & \textcolor{blue}{0.16} & \textcolor{blue}{0.24} \\ 
%   &  & 0.3 & \textcolor{blue}{0.06} & \textcolor{blue}{0.08} & \textcolor{blue}{0.08} & \textcolor{blue}{0.16} & \textcolor{blue}{0.26} & \textcolor{blue}{0.07} & \textcolor{blue}{0.08} & \textcolor{blue}{0.10} & \textcolor{blue}{0.17} & \textcolor{blue}{0.24} \\ 
%   &  & 0.1 & \textcolor{blue}{0.16} & \textcolor{blue}{0.14} & \textcolor{blue}{0.15} & \textcolor{blue}{0.23} & \textcolor{blue}{0.30} & \textcolor{blue}{0.13} & \textcolor{blue}{0.11} & \textcolor{blue}{0.14} & \textcolor{blue}{0.21} & \textcolor{blue}{0.27} \\ 
%   &  & 0.05 & \textcolor{blue}{0.25} & \textcolor{blue}{0.20} & \textcolor{blue}{0.22} & \textcolor{blue}{0.27} & \textcolor{blue}{0.32} & \textcolor{blue}{0.22} & \textcolor{blue}{0.18} & \textcolor{blue}{0.20} & \textcolor{blue}{0.24} & \textcolor{blue}{0.29} \\ 
%   &  & 0 & \textcolor{blue}{0.35} & \textcolor{blue}{0.35} & \textcolor{blue}{0.34} & \textcolor{blue}{0.35} & \textcolor{blue}{0.35} & \textcolor{blue}{0.31} & \textcolor{blue}{0.32} & \textcolor{blue}{0.30} & \textcolor{blue}{0.30} & \textcolor{blue}{0.30} \\ \rule{0pt}{4ex}
%   & \multirow{5}{*}{200} & 0.5 & \textcolor{blue}{0.14} & \textcolor{blue}{0.15} & \textcolor{blue}{0.20} & \textcolor{blue}{0.29} & \textcolor{blue}{0.32} & \textcolor{blue}{0.16} & \textcolor{blue}{0.18} & \textcolor{blue}{0.24} & \textcolor{blue}{0.31} & \textcolor{blue}{0.32} \\ 
%   &  & 0.3 & \textcolor{blue}{0.14} & \textcolor{blue}{0.15} & \textcolor{blue}{0.20} & \textcolor{blue}{0.29} & \textcolor{blue}{0.32} & \textcolor{blue}{0.16} & \textcolor{blue}{0.19} & \textcolor{blue}{0.24} & \textcolor{blue}{0.31} & \textcolor{blue}{0.32} \\ 
%   &  & 0.1 & \textcolor{blue}{0.20} & \textcolor{blue}{0.18} & \textcolor{blue}{0.23} & \textcolor{blue}{0.30} & \textcolor{blue}{0.32} & \textcolor{blue}{0.20} & \textcolor{blue}{0.20} & \textcolor{blue}{0.26} & \textcolor{blue}{0.32} & \textcolor{blue}{0.32} \\ 
%   &  & 0.05 & \textcolor{blue}{0.27} & \textcolor{blue}{0.24} & \textcolor{blue}{0.28} & \textcolor{blue}{0.31} & \textcolor{blue}{0.32} & \textcolor{blue}{0.27} & \textcolor{blue}{0.24} & \textcolor{blue}{0.29} & \textcolor{blue}{0.32} & \textcolor{blue}{0.32} \\ 
%   &  & 0 & \textcolor{blue}{0.36} & \textcolor{blue}{0.35} & \textcolor{blue}{0.35} & \textcolor{blue}{0.35} & \textcolor{blue}{0.36} & \textcolor{blue}{0.35} & \textcolor{blue}{0.34} & \textcolor{blue}{0.34} & \textcolor{blue}{0.35} & \textcolor{blue}{0.34} \\ 
%   \hline
%\end{tabular}
%\end{table}
%
%
%
%

\section{Computational Time and Number of Selected Predictors}

\begin{figure}[h]
    	\centering
    	\includegraphics[width=.9\textwidth]{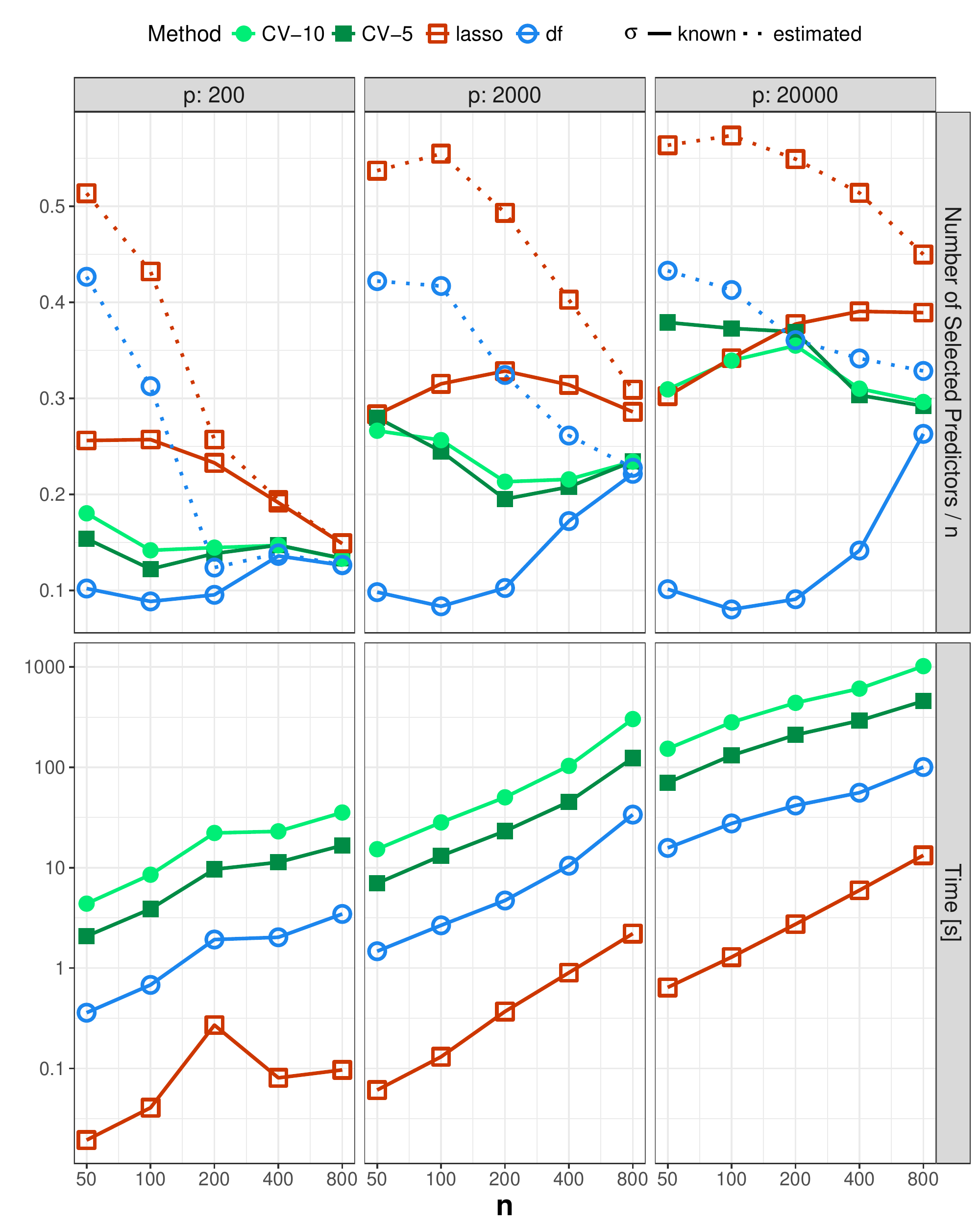}
    	\caption{The number of selected predictors divided by $n$ (top), along with computational time of evaluating the estimator and tuning the $\lambda$-parameter using the different methods (bottom). The design parameters were: $\sigma=0.5$, $\gamma =1$, $\alpha =0.1$,  and the design type was (S) with a constant correlation of $\rho = 0.1$ (see Section 4)}
    	\label{fig:Means}
\end{figure}

\clearpage
\section{Risk estimates}

Plots of the risk estimates relative to $n\sigma^2$ for the estimators $\hat{\mu}_{\mathrm{OLS.l}}^{\hat{\lambda}_{\mathrm{df}_{\mathrm{S}}}}$ and $\hat{\mu}_{\mathrm{OLS.l}}^{\hat{\lambda}_{\mathrm{df}}}$. The dashed lines are the relative risks for the oracle-OLS estimator. 

\includepdf[pages=-]{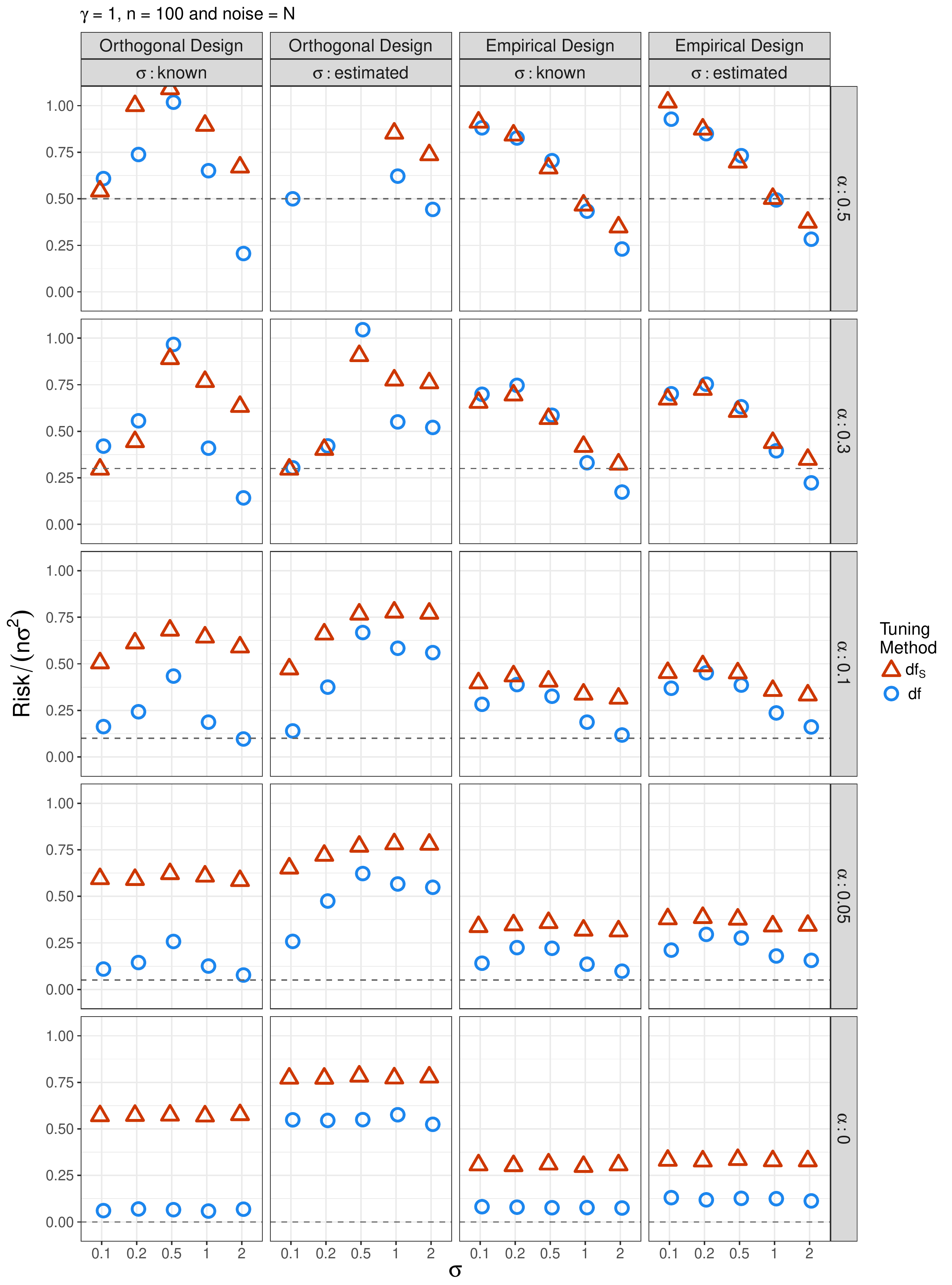}

Plots of the risk estimates relative to $n\sigma^2$ for the estimators $\hat{\mu}_{\mathrm{OLS.l}}^{\hat{\lambda}_{\mathrm{df}}}$, $\hat{\mu}_{\mathrm{OLS.l}}^{\hat{\lambda}_{\mathrm{CV-5}}}$, $\hat{\mu}_{\mathrm{OLS.l}}^{\hat{\lambda}_{\mathrm{CV-10}}}$ and 
$\hat{\mu}_{\mathrm{lasso}}^{\hat{\lambda}_{\mathrm{df}_{\mathrm{S}}}}$. The dashed lines are the relative risks for the oracle-OLS estimator. 

\includepdf[pages=-]{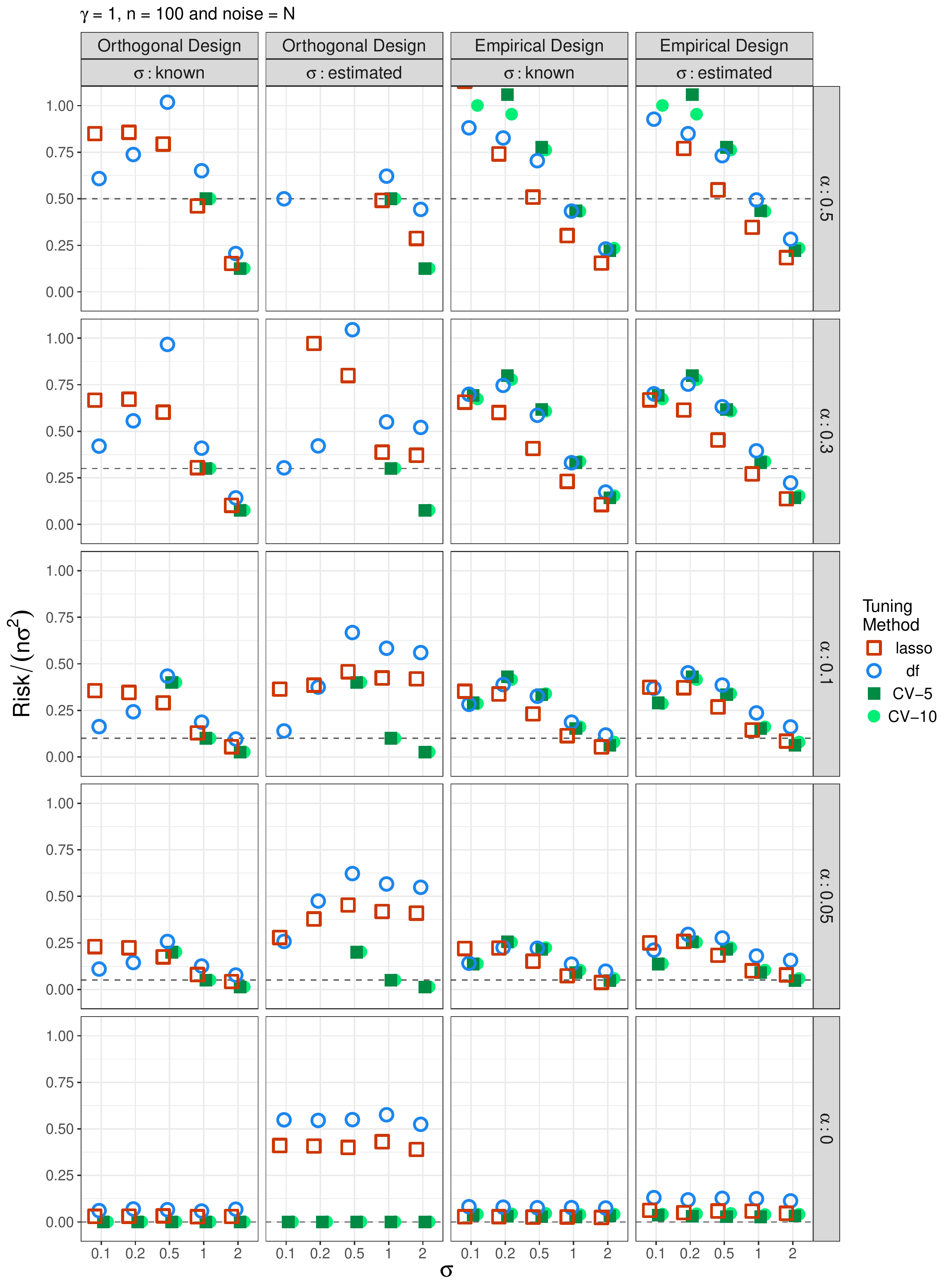}